\theoremstyle{remark}
\theoremstyle{definition}
\newtheorem{teo}{Theorem}[section]
\newtheorem{lema}[teo]{Lemma}
\newtheorem{prop}[teo]{Proposition}
\newtheorem{cor}[teo]{Corollary}
\newtheorem{obs}[teo]{Remark}
\newtheorem{defi}[teo]{Definition}
\newtheorem{cond}[teo]{Conditions}
\numberwithin{equation}{section}
\theoremstyle{plain}
\newtheorem{theorem}{Theorem}[section]
\theoremstyle{definition}
\newcommand{\ve}{\varepsilon}
\newcommand{\R}{{\mathbb R}}
\newcommand{\Z}{{\mathbb{Z}}}
\newcommand{\N}{{\mathbb N}}
\newcommand{\F}{{\mathcal F}}
\newcommand{\G}{{\mathcal G}}
\newcommand{\B}{{\mathcal B}}
\newcommand{\0}{{\mathbf 0}}
\def\F{\mathcal F}
\newcommand{\p}{\partial}
\def\1{{\mathbf 1}}
\def\R{\mathbb{R}}
\def\N{\mathbb{N}}
\def\F{\mathcal F}
\numberwithin{equation}{section}
\begin{document}

\title{Metastability for small random perturbations of a PDE with blow-up}

\author{Pablo Groisman, Santiago Saglietti and Nicolas Saintier}

\thanks{2010 {\it Mathematics Subject Classification:} 60H15, 35K57.}

\keywords{stochastic partial differential equations, random perturbations, blow-up, metastability}

\address{Departamento de Matem\'atica\hfill\break
\indent Facultad de Ciencias Exactas y Naturales\hfill\break
\indent Universidad de Buenos Aires \hfill\break
\indent Pabell\'{o}n I, Ciudad Universitaria \hfill\break
\indent C1428EGA Buenos Aires, Argentina.
}
\email{{\tt pgroisma@dm.uba.ar, ssaglie@dm.uba.ar, nsaintie@dm.uba.ar}
\hfill\break\indent {\it
Web page:} {\tt http://mate.dm.uba.ar/$\sim$pgroisma}}

\date{}

\begin{abstract}
We study small random perturbations by additive space-time white noise of a reaction-diffusion equation with a unique stable equilibrium and solutions
which blow up in finite time.
We show that for initial data in the domain of attraction of the stable equilibrium the perturbed system exhibits metastable behavior: its time averages
remain stable around this equilibrium until an abrupt and unpredictable
transition occurs which leads to explosion in a finite (but exponentially large)
time. On the other hand, for initial data in the domain of explosion we show
that the explosion time of the perturbed system converges to the explosion time
of the deterministic solution.
\end{abstract}

\maketitle

\section{Introduction}

We consider, for $\varepsilon > 0$, the stochastic process $U^{u,\varepsilon}$ which formally satisfies the stochastic partial differential equation
\begin{equation}\label{MainSPDE}
\left\{\begin{array}{rll}
\p_t U^{u,\varepsilon} &= \p^2_{xx}U^{u,\varepsilon} + g(U^{u,\varepsilon}) + \varepsilon \dot{W}& \quad t>0 \,,\, 0<x<1 \\
U^{u,\varepsilon}(t,0)&=U^{u,\varepsilon}(t,1)=0 & \quad t>0 \\
U^{u,\varepsilon}(0,x) &=u(x)
\end{array}\right.
\end{equation} where $g:\R \rightarrow \R$ is given by $g(u):=u|u|^{p-1}$ for fixed $p > 1$, $\dot{W}$ is space-time white noise and $u$ is a continuous function satisfying $u(0)=u(1)=0$.

This process can be thought of as a random perturbation of the dynamical system $U^u$ given by the
solution of \eqref{MainSPDE} with $\ve=0$, i.e. $U^u$ satisfies the partial differential equation
\begin{equation}
\label{MainPDE}
\left\{\begin{array}{rll}
\p_t U^u &= \p^2_{xx}U^u + g(U^u) & \quad t>0 \,,\, 0<x<1 \\
U^u(t,0)& =0 & \quad t>0 \\
U^u(t,1) & = 0 & \quad t>0 \\
U^u(0,x) &=u(x) & \quad 0<x<1.
\end{array}\right.
\end{equation}
Equation \eqref{MainPDE} is of reaction-diffusion type, a broad class of evolution equations which naturally arise
in the study of phenomena as diverse as diffusion of a fluid through a porous material,
transport in a semiconductor, coupled chemical reactions with spatial diffusion, population genetics, among others. In all these cases, the equation represents an approximate model
of the phenomenon and thus it is of interest to understand how its description might change
if subject to small random perturbations.

An important feature of \eqref{MainPDE} is that it admits solutions which are
only local in time and blow up in a finite time. Indeed, the system has a unique
stable equilibrium, the null function $\mathbf{0}$, and a countable family of
unstable equilibriums, all of which are saddle points. The stable equilibrium
possesses a domain of attraction $\mathcal{D}_{\mathbf{0}}$ such that if $u \in
\mathcal{D}_{\mathbf{0}}$ then the solution $U^u$ of \eqref{MainPDE} with
initial datum $u$ is globally defined and converges to $\mathbf{0}$ as time
tends to infinity. Similarly, each unstable equilibrium has its own stable
manifold, the union of which constitutes the boundary of
$\mathcal{D}_{\mathbf{0}}$. Finally, for $u \in
\mathcal{D}_e:=\overline{\mathcal{D}_{\mathbf{0}}}^c$ the system blows up in
finite time, i.e. there exists a time $0 <\tau^u < +\infty$ such that the solution $U^u$
is defined for all $t \in [0,\tau^u)$ but satisfies
\[
 \lim_{t \nearrow \tau^u}\|U^u(t,\cdot)\|_\infty=+\infty.
\]
The behavior of the system is, in some aspects, similar to the
double-well potential model studied in \cite{B1,MOS}. Indeed,
\eqref{MainPDE} can be reformulated as
$$
\p_t U^u = - \frac{\partial S}{\partial \varphi} (U^u)
$$ where $S$ is the potential formally given by
$$
S(v) = \int_0^1 \left[\frac{1}{2} \left(\frac{dv}{dx}\right)^2 + G(v)\right],
$$ where we take $G(v):=- \frac{|v|^{p+1}}{p+1}$ as opposed to the term $G(v) = \frac{\lambda}{4}v^4 - \frac{\mu}{2}v^2$ appearing in the double-well potential model.
In our system, instead of having two wells, each being the domain of attraction of the two stable
equilibriums of the system, we have only one which corresponds to
$\mathcal{D}_{\mathbf{0}}$. Since our potential tends to $-\infty$ along every
direction, we can imagine the second well in our case as being infinity and thus
there is no return from there once the system reaches its bottom. Moreover,
since the potential behaves like $-\lambda^{p+1}$ in every direction, if the
system falls into this ``infinite well'' it will reach its bottom (infinity) in
a finite time (blow-up).

Upon adding a small noise to \eqref{MainPDE}, one wonders if there are any
qualitative differences in behavior between the deterministic system
\eqref{MainPDE} and its \mbox{stochastic perturbation \eqref{MainSPDE}.} For
short times both systems should behave similarly, since in this case the noise
term will be typically of much smaller order than the remaining terms in the
right hand side of \eqref{MainSPDE}. However, due to the independent and
normally distributed increments of the perturbation, when given enough time the
noise term will eventually reach sufficiently large values so as to induce a
significant change of behavior in \eqref{MainSPDE}. We are interested in
understanding what changes might occur in the blow-up phenomenon due to this
situation and, more precisely, which are the asymptotic properties as
$\varepsilon \rightarrow 0$ of the explosion time of \eqref{MainSPDE} for the
different initial data.
Based on all of the considerations above, we expect the following scenario:
\begin{enumerate}
\item [i.] \textit{Thermalization}. For initial data in
$\mathcal{D}_{\mathbf{0}}$, the stochastic system is at first attracted towards
this equilibrium. Once near it, the terms in the right hand side of
\eqref{MainPDE} become negligible and so the process is then pushed away from
the equilibrium by noise. Being away from $\mathbf{0}$, the noise becomes
overpowered by the remaining terms in the right hand side of \eqref{MainSPDE}
and this allows for the previous pattern to repeat itself: a large number of
attempts to escape from the \mbox{equilibrium,} followed by a strong attraction
towards it.
 \item [ii.] \textit{Tunneling}. Eventually, after many frustrated attempts, the
process succeeds in escaping $\mathcal{D}_{\mathbf{0}}$ and reaches the domain
of explosion, the set of initial data for which \eqref{MainPDE} blows up in
finite time. Since the probability of such an event is very small, we expect
this escape time to be exponentially large.
Furthermore, due to the large number of attempts that are necessary, we also
expect this time to show little memory of the initial data.
\item [iii.] \textit{Final excursion}. Once inside the domain of explosion, the
stochastic system is forced to explode by the dominating source term $g$.
\end{enumerate}
This type of phenomenon is known as \textit{metastability}: the system behaves
for a long time as if it were under equilibrium, but then performs an abrupt
transition towards the real equilibrium (in our case, towards infinity). The
former description was proved rigorously for the (infinite-dimensional)
double-well potential model  in \cite{B1,MOS}, inspired by the work in
\cite{GOV} for its finite-dimensional analogue. Their proofs rely heavily on
large deviations estimates for $U^{u,\varepsilon}$ established in \cite{FJL} for
the infinite-dimensional system and in \cite{FW} for the finite-dimensional
setting. In our case, we are only capable of proving
the existence of local solutions of \eqref{MainSPDE} and in fact,
explosions \textit{will} occur for $U^{u,\varepsilon}$. As a consequence, we
will not be able to apply these same estimates directly, as the validity of
these estimates relies on a proper control of the growth of solutions which does
not hold in our setting. Localization techniques apply reasonably well to deal
with the process
until it escapes any fixed bounded domain but they cannot be used to say what
happens from then onwards. Since we wish to focus specifically on trajectories
that blow up in finite time, it is clear that a new approach is needed for this
last part, one which involves a careful study of the blow-up phenomenon.
Unfortunately, when dealing with perturbations of differential equations with
blow-up, understanding
how the behavior of the blow-up time is modified or even showing the persistence
of the blow-up phenomenon itself is by no means an easy task in most cases.
There are no general results addressing this matter, not even for nonrandom
perturbations. This is why the usual approach to this kind of problems is to
consider particular models such as ours.

The article is organized as follows. In Section \ref{defr} we give some
preliminary definitions, introduce the local Freidlin-Wentzell estimates and
then
detail the results of this article. In Section \ref{deter} we give a detailed
description of the deterministic system \eqref{MainPDE}. Section \ref{explo}
focuses on the explosion time of the stochastic system for initial data in the
domain of explosion. The construction of an auxiliary domain $G$ is performed in Section \ref{secg} and the study of
the escape from $G$ is carried out in Section \ref{secescapedeg}. In Section
\ref{secfinal} we establish metastable behavior for solutions with initial data
in the domain of attraction of the stable equilibrium. Finally, we include at
the end an appendix with some auxiliary results to be used throughout our
analysis.

\section{Definitions and results}\label{defr}

\subsection{The deterministic PDE}

Our purpose in this section is to study equation \eqref{MainPDE}. We assume that the source term $g : \R \rightarrow \R$
is given by $g(u)=u|u|^{p-1}$ for fixed $p > 1$ and also that $u$ belongs to the space $C_{D}([0,1])$ of
continuous functions on $[0,1]$ satisfying homogeneous Dirichlet boundary conditions, namely
$$
C_{D}([0,1])= \{ v \in C([0,1]) : v(0)=v(1)=0 \}.
$$ The space $C_{D}([0,1])$ is endowed with the supremum norm, i.e.
$$
\| v \|_\infty = \sup_{x \in [0,1]} |v(x)|.
$$ For any choice of $r > 0$ and $v \in C_D([0,1])$, we let $B_r(v)$ denote the
closed ball in $C_D([0,1])$ of center $v$ and radius $r$. Whenever the center is
the null function $\mathbf{0}$, we simply write $B_r$.
Equation \eqref{MainPDE} can be reformulated as
\begin{equation}\label{formalPDE}
\p_t U = - \frac{\partial S}{\partial \varphi} (U)
\end{equation} where the \textit{potential} $S$ is the functional on
$C_D([0,1])$ given by
$$
S(v) = \left\{ \begin{array}{ll} \displaystyle{\int_0^1 \left[\frac{1}{2}
\left(\frac{dv}{dx}\right)^2 - \frac{|v|^{p+1}}{p+1}\right]}
              & \text{ if $v \in H^1_0((0,1))$}
                   \\ \\ +\infty & \text{ otherwise.}\end{array}\right.
$$ Here $H^1_0((0,1))$ denotes the Sobolev space of square-integrable functions
defined on $[0,1]$ with square-integrable weak derivative which vanish at the
boundary $\{0,1\}$. Recall that $H^1_0((0,1))$ can be embedded into $C_D([0,1])$
so that the potential is indeed well-defined.  We refer the reader to the
appendix for a review of some of the main properties of $S$ which shall be
required throughout our work.

The formulation on \eqref{formalPDE} is interpreted as the validity of
$$
\int_0^1 \p_t U(t,x) \varphi(x)dx = \lim_{h \rightarrow 0} \frac{S(U + h\varphi)
- S(U)}{h}
$$ for any $\varphi \in C^1([0,1])$ with $\varphi(0)=\varphi(1)=0$. It is known
that for any $u \in C_D([0,1])$ there exists a unique solution $U^{u}$ to
equation \eqref{MainPDE} defined on some maximal time interval $[0,\tau^{u})$
where $0 < \tau^{u} \leq +\infty$ is called the \textit{explosion time} of $U^u$
(see \cite{QS} for further details). In general, we will say that this solution belongs to the
space
$$
C_D([0,\tau^u) \times [0,1]) = \{ v \in C( [0,\tau^{u}) \times [0,1]) :
v(\cdot,0)=v(\cdot,1) \equiv 0 \}.
$$ However, whenever we wish to make its initial datum $u$ explicit we will do
so by saying that the solution belongs to the space
$$
C_{D_{u}}([0,\tau^{u}) \times [0,1]) = \{ v \in C( [0,\tau^{u}) \times [0,1]) :
v(0,\cdot)=u \text{ and }v(\cdot,0)=v(\cdot,1) \equiv 0 \}.
$$ The origin $\mathbf{0} \in C_D([0,1])$ is the unique stable equilibrium of
the system and is in fact asymptotically stable. It corresponds to the unique
local minimum of the potential $S$. There is also a family of unstable
equilibria of the system corresponding to the remaining critical points of the
potential $S$, all of which are saddle points. Among these unstable equilibria
there exists only one of them which is nonnegative, which we shall denote by $z$. It can
be shown that this equilibrium $z$ is in fact strictly positive for $x \in
(0,1)$, symmetric with respect to the axis $x=\frac{1}{2}$ (i.e. $z(x)=z(1-x)$
for every $x \in [0,1]$) and that is both of minimal potential and minimal norm
among all the unstable equilibria. The remaining equilibria are obtained by
alternating scaled copies of both $z$ and $-z$ as Figure \ref{fig0}
shows. We establish this fact rigurously in Section \ref{deter}.

\begin{figure}
$$
	\begin{array}{ccccc}
	\begin{array}{c}\includegraphics[width=4cm,height=1.3cm]{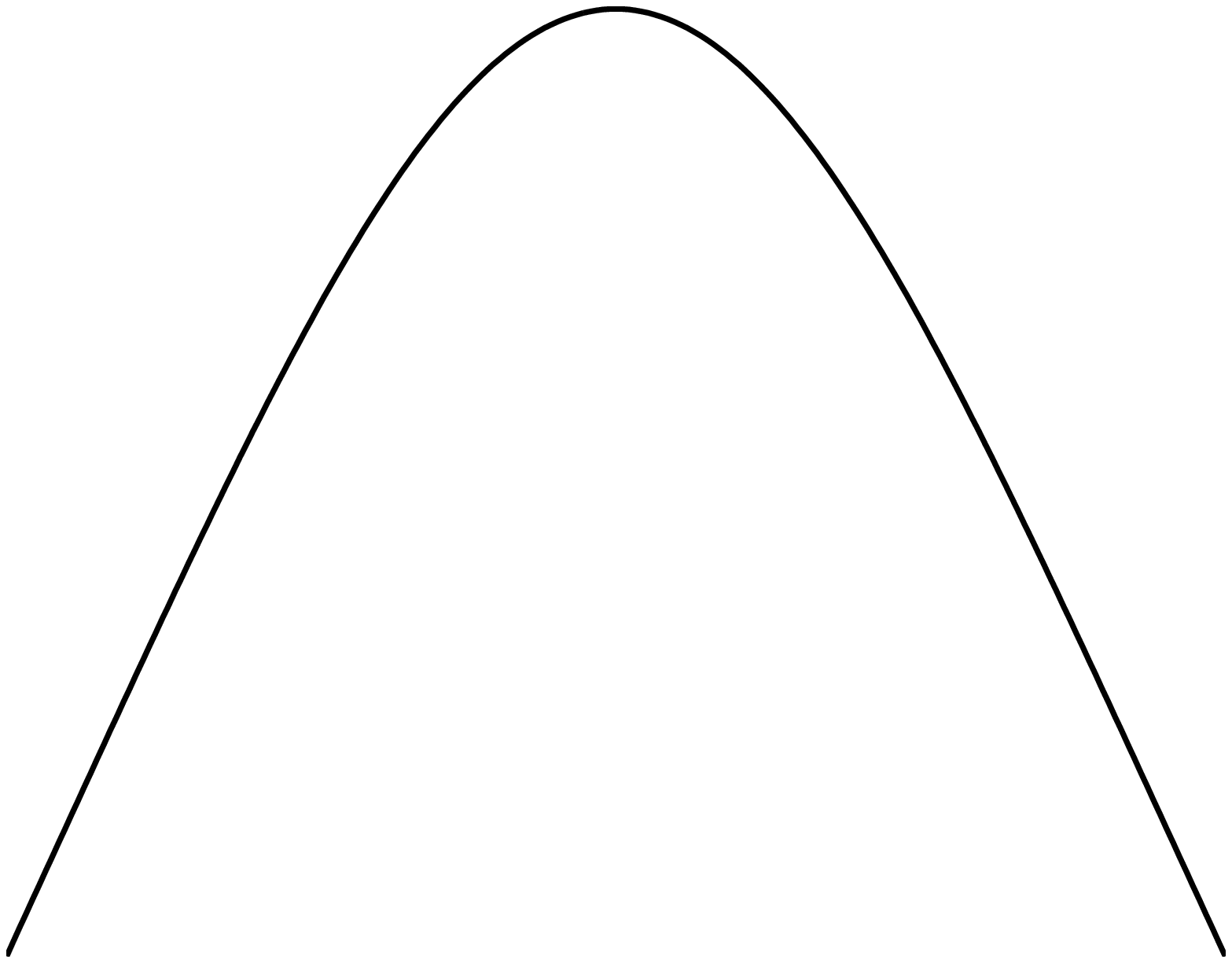}\\
	\\
	\\
	\end{array} & &\begin{array}{c} \includegraphics[width=4cm, height=3cm]{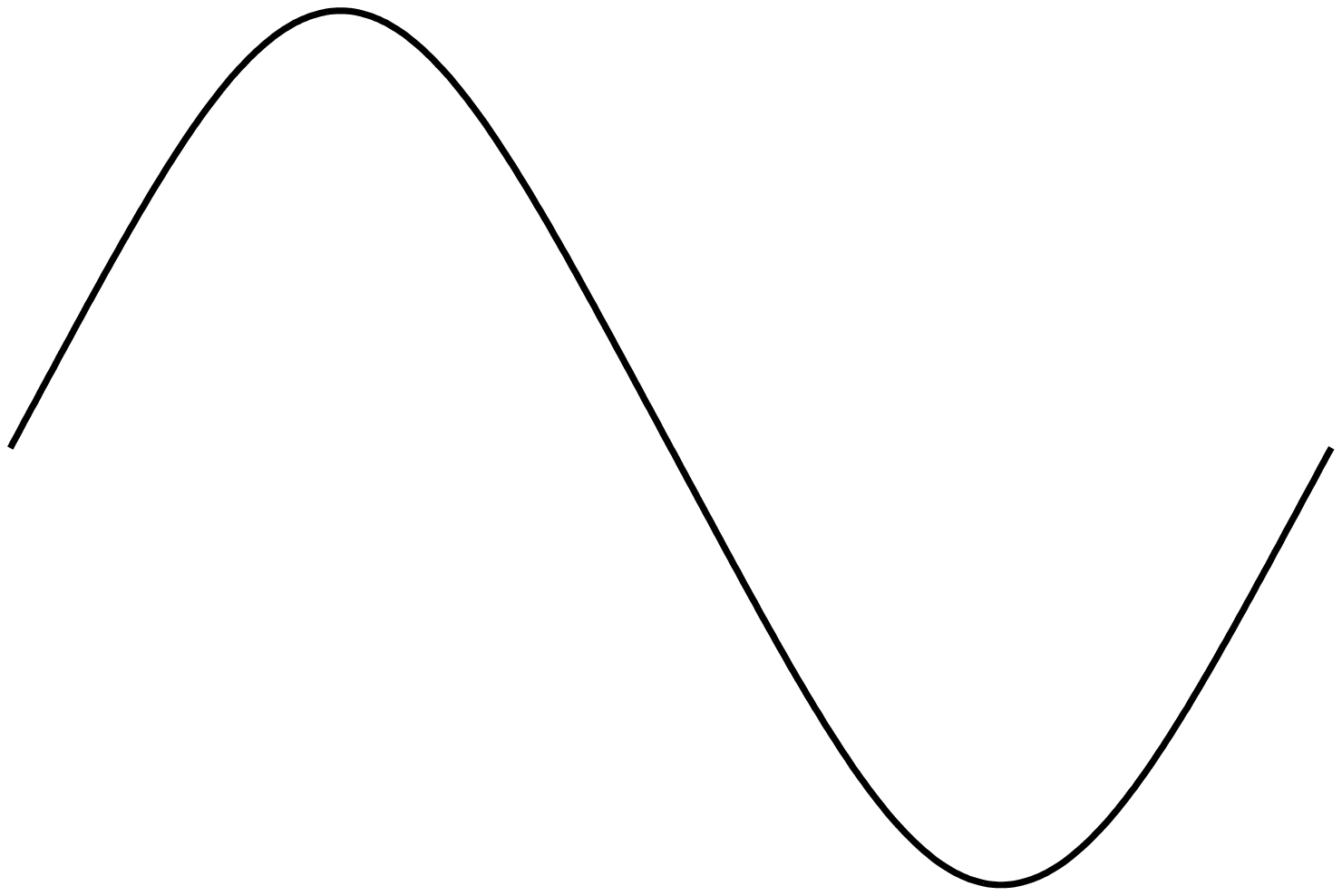}\\
	\end{array} &  &\begin{array}{c}	\includegraphics[width=4cm, height=4cm, angle=180, origin=c]{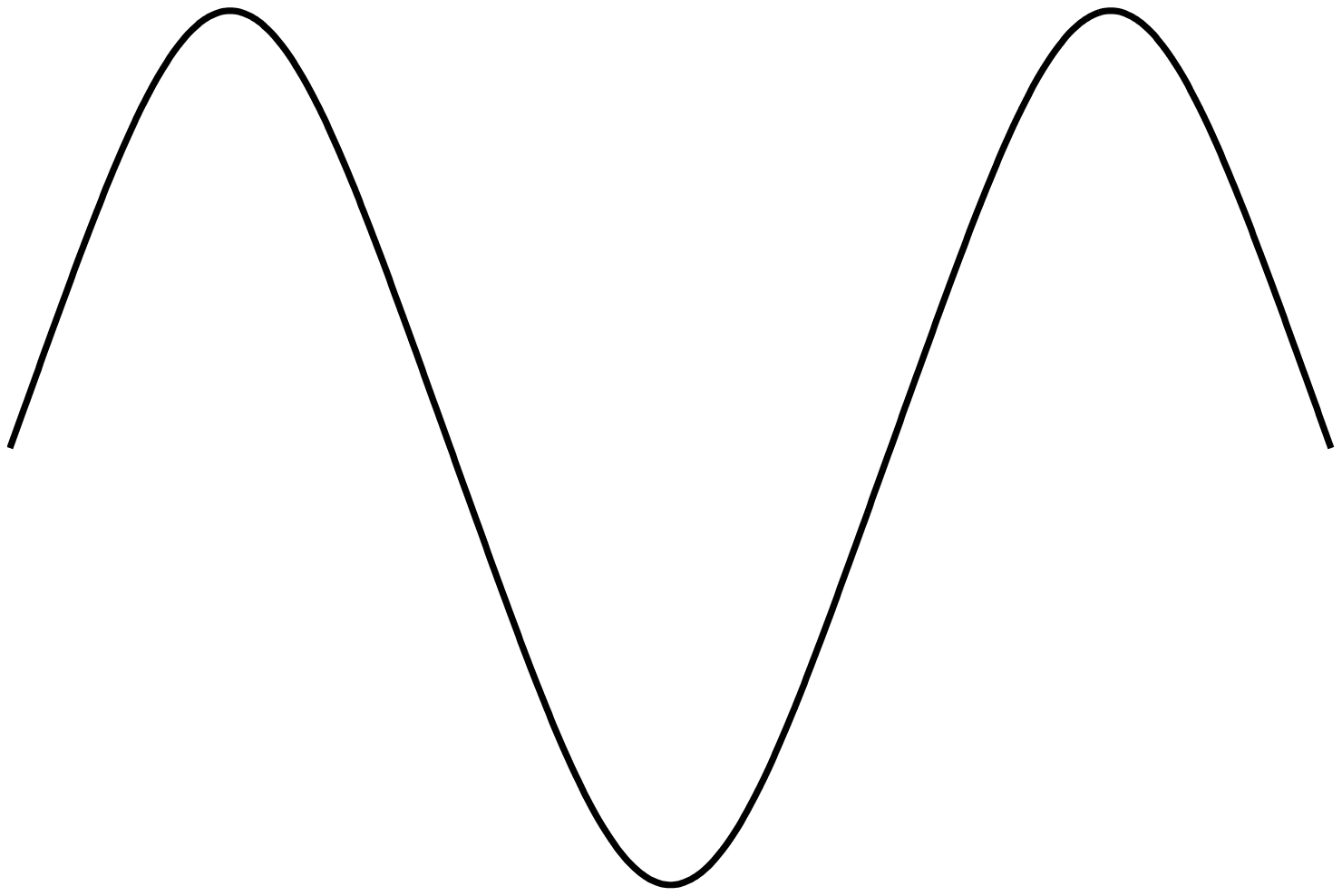} \\ \end{array}	\end{array}$$
	\caption{Examples of unstable equilibria: $z$, $z^{(2)}$ and $z^{(-3)}$.}
	\label{fig0}
\end{figure}

\subsection{Definition of solution for the SPDE}\label{secsol}

In general, equations like \eqref{MainSPDE} do not admit strong solutions in the
usual sense as they may not be globally defined but instead defined \textit{up
to an explosion time}. In the following we formalize the idea of explosion and
properly define the concept of solutions of \eqref{MainSPDE}.

First, we fix a probability space $(\Omega,\F,P)$ on which we have defined a
Brownian sheet
$$
W=\{ W{(t,x)} : (t,x) \in \R^+ \times [0,1]\},
$$ i.e. a stochastic process satisfying the following properties:
\begin{enumerate}
\item [i.] $W$ has continuous paths, i.e. $(t,x) \mapsto W{(t,x)}(\omega)$ is
continuous for every $\omega \in \Omega$.
\item [ii.] $W$ is a centered Gaussian process with covariance structure given
by
$$
\text{Cov}( W{(t,x)} , W{(s,y)} ) = (t \wedge s)(x \wedge y)
$$ for every $(t,x),(s,y) \in \R^+ \times [0,1]$.
\end{enumerate}
Then, for every $t \geq 0$ we define
$$
\G_t = \sigma( W{(s,x)} : 0 \leq s \leq t , x \in [0,1])
$$ and denote its augmentation by $\F_t$.\footnote{This means that $\F_t =
\sigma( \G_t \cup \mathcal{N})$ where $\mathcal{N}$ denotes the class of all
$P$-null sets of $\G_\infty = \sigma( \G_t : t \in \R^+)$.} The family
$(\F_t)_{t \geq 0}$ constitutes a filtration on $(\Omega,\F)$.
A \textit{solution up to an explosion time} of the equation \eqref{MainSPDE} on
$(\Omega,\F,P)$ with respect to the Brownian sheet $W$ and with initial datum $u
\in C_D([0,1])$
is a stochastic process $U^{u,\varepsilon} = \{ U^{u,\varepsilon}{(t,x)} : (t,x)
\in \R^+ \times [0,1]\}$ satisfying the following properties:

\noindent \begin{itemize}
\item [i.] $U^{u,\varepsilon}(0,\cdot)\equiv u$
\item [ii.] $U^{u,\varepsilon}$ has continuous paths taking values in
$\overline{\R}:=\R \cup \{\pm \infty\}$.
\item [iii.] $U^{u,\varepsilon}$ is adapted to the filtration $(\F_t)_{t \geq
0}$, i.e. for every $t \geq 0$ the mapping
$$
(\omega,x) \mapsto U^{u,\varepsilon}{(t,x)}(\omega)
$$ is $\F_t \otimes \B([0,1])$-measurable.
\item [iv.] If $\Phi$ denotes the fundamental solution of the heat equation on
the interval $[0,1]$ with homogeneous Dirichlet boundary conditions, which is
given by the formula
$$
\Phi(t,x,y) = \frac{1}{\sqrt{4\pi t}} \sum_{n \in \Z} \left[ \exp\left( -
\frac{(2n+y -x)^2}{4t} \right) - \exp\left( - \frac{(2n+y +x)^2}{4t}
\right)\right],
$$ and for $n \in \N$ we define the stopping time $\tau^{(n),u}_\varepsilon :=
\inf\{t>0 : \|U^{u,\varepsilon}{(t,\cdot)}\|_{\infty} \geq n\}$ then for every
$n \in \N$ we have $P$-a.s.: \\
\begin{enumerate}
\item [$\bullet$]
$\displaystyle{\int_0^1\int_{0}^{t\wedge{\tau^{(n),u}_\varepsilon}}
|\Phi(t\wedge \tau^{(n),u}_\varepsilon -s,x,y)g(U^{u,\varepsilon}(s,y))|dsdy  <
+\infty \text{  for all } t \in \R^+}$\\ \, \\
\item [$\bullet$] $\displaystyle{U^{u,\varepsilon}(t \wedge
\tau^{(n),u}_\varepsilon,x) = I_H^{(n)}(t,x)+ I_N^{(n)}(t,x) \text{  for
all }(t,x) \in \R^+ \times [0,1]},$ where
$$
I_H^{(n)}(t,x) = \int_0^1 \Phi(t \wedge \tau^{(n),u}_\varepsilon,x,y)u(y)dy
$$ \begin{center}
and
\end{center}
$$
I_N^{(n)}(t,x) =  \int_0^{t \wedge \tau^{(n),u}_\varepsilon} \int_0^1
\Phi(t\wedge \tau^{(n),u}_\varepsilon -s,x,y) \left(
g(U^{u,\varepsilon}(s,y))dyds + \varepsilon dW(s,y)\right).
$$
\end{enumerate}
\end{itemize}
We call the random variable $\tau^u_\varepsilon:=\lim_{n \rightarrow +\infty}
\tau^{(n),u}_\varepsilon$ the {\em explosion time} of $U^{u,\varepsilon}$.
Notice that the assumption of \mbox{continuity of $U^{u,\varepsilon}$} over
$\overline{\R}$ implies that:
\begin{enumerate}
\item [$\bullet$] $\tau^u_\varepsilon = \inf \{ t > 0 :
\|U^{u,\varepsilon}{(t,\cdot)}\|_\infty =+\infty\}$
\item [$\bullet$] $\|U^{u,\varepsilon}{((\tau^u_\varepsilon)^-,\cdot)}\|_\infty
= \|U^{u,\varepsilon}{(\tau^u_\varepsilon,\cdot)}\|_\infty =+\infty\,\,\mbox{ on
}\,\,\{ \tau^u_\varepsilon < +\infty\}.$
\end{enumerate}
We stipulate that $U^{u,\varepsilon}{(t,\cdot)}\equiv
U^{u,\varepsilon}(\tau^u_\varepsilon,\cdot)$ for $t \geq \tau$ whenever
$\tau^u_\varepsilon < +\infty$ but we do not assume that $\lim_{t \to
+\infty} U^{u,\varepsilon}{(t,\cdot)}$ exists if $\tau^u_\varepsilon=+\infty$.
Furthermore, since any initial datum $u \in C_D([0,1])$ is bounded, we always
have $P( \tau^u_\varepsilon > 0) = 1$. It can be shown that there exists a
(pathwise) unique solution $U^{u,\varepsilon}$ of \eqref{MainSPDE} up to an
explosion time and that it has the strong Markov property, i.e. if $\tilde
\tau$ is a stopping time of $U^{u,\varepsilon}$ then, conditional on
$\tilde\tau<\tau^u_\varepsilon$ and $U^{u,\varepsilon}{(\tilde\tau,\cdot)}=v$,
the future $\{ U^{u,\varepsilon}{(t + \tilde \tau,\cdot)} \colon 0 <
t<\tau^u_\varepsilon-\tilde\tau\}$ is independent of the past $\{
U^{u,\varepsilon}{(s,\cdot)} \colon 0 \leq s\le \tilde\tau \}$ and identical in
law to the solution of \eqref{MainSPDE} with initial datum $v$. We refer to
\cite{IM,W} for details.

\subsection{Local Freidlin-Wentzell estimates}\label{secLDP}

One of the main tools we use in the study of solutions of \eqref{MainSPDE} is
the local large deviations principle we briefly describe next.

Given $u \in C_D([0,1])$ and $T > 0$, we consider the metric space of continuous
functions
$$
C_{D_u}([0,T] \times [0,1]) = \{ v \in C([0,T]\times[0,1]) : v(0,\cdot)=u \text{
and }v(\cdot,0)=v(\cdot,1)\equiv 0 \}
$$ with the distance $d_T$ induced by the supremum norm, i.e. for $v,w \in
C_{D_u}([0,T]\times[0,1])$
$$
d_T(v,w) := \sup_{(t,x) \in [0,T]\times [0,1]} | v(t,x) - w(t,x) |,
$$ and define the rate function $I^u_T : C_{D_u}([0,T]\times [0,1]) \rightarrow
[0,+\infty]$ by the formula
$$
I^u_T (\varphi) = \left\{ \begin{array}{ll} \displaystyle{\frac{1}{2} \int_0^T
\int_0^1 |\p_t \varphi - \p_{xx} \varphi - g(\varphi)|^2} & \text{ if }\varphi
\in W^{1,2}_2([0,T]\times[0,1]) \,,\,\varphi(0,\cdot) = u \\ \\ +\infty &
\text{otherwise.}\end{array}\right.
$$
Here $W^{1,2}_2([0,T]\times[0,1])$ is the closure of $C^\infty([0,T] \times
[0,1])$ with respect to the norm
$$
\| \varphi \|_{W^{1,2}_2} = \left( \int_0^T \int_0^1 \left[ |\varphi|^2 + |\p_t
\varphi|^2 + |\p_x \varphi|^2 + |\p_{xx} \varphi|^2\right]\right)^\frac{1}{2},
$$ i.e. the Sobolev space of square-integrable functions defined on $[0,T]\times
[0,1]$ with one square-integrable weak \mbox{time derivative} and two
square-integrable weak space derivatives.

By following the lines of \cite{B1,FJL,SOW}, it is possible to establish a
large deviations principle for solutions of \eqref{MainSPDE} with rate function
$I$ as given above whenever the source term $g$ is globally Lipschitz.
Unfortunately, this is not the case for us. Nonetheless, by employing
localization arguments like the ones carried out in \cite{GS}, one can obtain a
weaker version of this principle which only holds locally, i.e. while the
process remains inside any fixed bounded region. More precisely, we have the
following result.

\begin{theorem} If for each $n \in \N$ and $u \in C_D([0,1])$ we define
$$
\tau^{(n),u}:= \inf\{ t > 0 : \| U^u(t,\cdot)\|_\infty \geq
n\}\hspace{1cm}\text{ and }\hspace{1cm}\mathcal{T}^{(n),u}_\varepsilon:=
\tau^{(n),u}_\varepsilon \wedge \tau^{(n),u}
$$ where $\tau^{(n),u}_\varepsilon$ is defined as in Section \ref{secsol}, then
the following estimates hold:
\begin{enumerate}
\item [$\bullet$] \textnormal{\bf Lower bound}. For any $\delta,h > 0$ and $n \in \N$, there
exists $\varepsilon_0$ such that
\begin{equation}\label{LDP1}
P\left( d_{T \wedge \mathcal{T}^{(n),u}_\varepsilon} \left( U^{u,\varepsilon},
\varphi\right) < \delta \right) \geq e^{-\frac{I_T^u(\varphi)+h}{\varepsilon^2}}
\end{equation} for all $0 < \varepsilon < \varepsilon_0$, $u \in C_D([0,1])$ and
$\varphi \in C_{D_u}([0,T]\times [0,1])$ with $\| \varphi \|_\infty \leq n$. \\
\item [$\bullet$] \textnormal{\bf Upper bound}. For any $\delta > 0$ there exist
$\varepsilon_0 > 0$ and $C > 0$ such that
\begin{equation}
\label{grandes1}
\sup_{u \in C_D([0,1])} P\left( d_{ T \wedge
\mathcal{T}^{(n),u}_\varepsilon}\left(U^{u,\varepsilon}, U^u\right) > \delta
\right) \leq e^{-\frac{C}{\varepsilon^2}},
\end{equation}for all $0 < \varepsilon < \varepsilon_0$.
\end{enumerate}
\end{theorem}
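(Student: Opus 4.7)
The plan is to establish both bounds via a truncation-and-localization argument that reduces the problem to the globally Lipschitz setting treated in \cite{B1,FJL,SOW}. For each integer $M > n$, fix a smooth globally Lipschitz function $g_M:\R \to \R$ that coincides with $g$ on $[-M,M]$ and is bounded outside, and let $U^{u,\varepsilon,M}$ denote the unique globally defined mild solution of \eqref{MainSPDE} with $g$ replaced by $g_M$. Write $U^{u,M}$ for the corresponding noiseless flow, and $I^{u,M}_T$ for the rate function obtained by replacing $g$ with $g_M$ in the definition of $I^u_T$. By pathwise uniqueness, $U^{u,\varepsilon,M}$ and $U^{u,\varepsilon}$ are indistinguishable on $[0,\tau^{(M),u}_\varepsilon)$, and $U^{u,M} = U^u$ on $[0,\tau^{(M),u})$; this is what lets estimates for the truncated system transfer to the original one, provided both processes stay inside $[-M,M]$.

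For the lower bound \eqref{LDP1}, fix $\delta,h>0$ and $n\in\N$ and choose $M>n+\delta$. Since $\|\varphi\|_\infty\le n<M$ one has $g(\varphi)\equiv g_M(\varphi)$ pointwise, whence $I^u_T(\varphi)=I^{u,M}_T(\varphi)$. The standard Freidlin--Wentzell lower bound for the globally Lipschitz equation driving $U^{u,\varepsilon,M}$ yields
$$
P\!\left(d_T(U^{u,\varepsilon,M},\varphi) < \delta\right) \;\ge\; e^{-(I^u_T(\varphi)+h)/\varepsilon^2}
$$
for all sufficiently small $\varepsilon$, uniformly in $u$ and in skeleton paths of bounded sup norm by compactness of sublevel sets of $I^{u,M}_T$ in $C_{D_u}([0,T]\times[0,1])$. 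On the event in question, $\|U^{u,\varepsilon,M}\|_\infty \le n+\delta < M$ on $[0,T]$, so by pathwise uniqueness $U^{u,\varepsilon,M} \equiv U^{u,\varepsilon}$ on $[0,T]$; in particular their supremum distance to $\varphi$ restricted to $[0,T\wedge\mathcal{T}^{(n),u}_\varepsilon]$ remains below $\delta$, which is \eqref{LDP1}.

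For the upper bound \eqref{grandes1}, again truncate at level $M>n+\delta$. The standard uniform exponential estimate of Freidlin--Wentzell type for the globally Lipschitz equation furnishes $\varepsilon_0,C>0$ such that
$$
\sup_{u\in C_D([0,1])} P\!\left(d_T(U^{u,\varepsilon,M},U^{u,M}) > \delta\right) \;\le\; e^{-C/\varepsilon^2}
$$
for all $0<\varepsilon<\varepsilon_0$. The uniformity in $u$ stems from the fact that the difference $U^{u,\varepsilon,M}-U^{u,M}$ obeys a linear mild equation whose coefficients involve only the Lipschitz constant of $g_M$ (independent of $u$), while the noise enters additively through the stochastic convolution $\varepsilon\int_0^t\int_0^1\Phi\,dW$, whose Gaussian tail is likewise $u$-free. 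On the complement of that event, $U^{u,M}=U^u$ on $[0,T\wedge\tau^{(n),u}]$ and is bounded there by $n$; hence $\|U^{u,\varepsilon,M}\|_\infty\le n+\delta<M$ on the same interval, so $U^{u,\varepsilon,M}\equiv U^{u,\varepsilon}$ there, and restricting to $[0,T\wedge\mathcal{T}^{(n),u}_\varepsilon]$ yields \eqref{grandes1}.

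The main obstacle is not the truncation scheme itself but securing the joint uniformity in $u$ and $\varphi$ claimed by the statement, which is not immediate from the pointwise form of the classical LDP. For the lower bound this is handled by restricting attention to skeleton paths with $\|\varphi\|_\infty\le n$, for which the $W^{1,2}_2$-continuity of the deterministic skeleton map is uniform in $u$; for the upper bound, the additive form of the noise combined with the uniform Lipschitz constant of $g_M$ renders the exponential estimate genuinely uniform in the starting datum, along the lines of \cite{GS}. These uniformity refinements are what make the local estimates strong enough to be iterated in the metastability argument developed later in the paper.
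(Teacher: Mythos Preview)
Your proposal is correct and matches the approach the paper itself sketches: the paper does not give a detailed proof of this theorem, but explains that it is obtained by localization/truncation arguments as in \cite{GS} together with the uniform estimates in the globally Lipschitz case from \cite{B1,FJL,SOW}, which is precisely what you carry out. One small remark: your appeal to ``compactness of sublevel sets of $I^{u,M}_T$'' for the joint uniformity in $(u,\varphi)$ is a bit imprecise since these sublevel sets lie in $u$-dependent spaces; the cleaner route (implicit in \cite{B1}) is to note that the skeleton map $(u,\text{control})\mapsto\text{solution}$ for the truncated equation is uniformly continuous on bounded sets with modulus depending only on the Lipschitz constant of $g_M$, which delivers the required uniformity directly.
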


The usual large deviations estimates for these type of systems usually feature a
more refined version of the upper bound than the one we give here (see
\cite{B1}, for example). However, the estimate in \eqref{grandes1} is enough for
our purposes and so we do not pursue any generalizations of it here. Also, notice that both estimates
are somewhat uniform in the initial datum. This uniformity is obtained as in
\cite{B1} by using the fact that $g$ is Lipschitz when restricted to bounded
sets. We refer to \cite{B1, FJL} for further details.

\subsection{Main results}\label{mainresults}

Our purpose is to study the asymptotic behavior as $\varepsilon \rightarrow 0$
of $U^{u,\varepsilon}$, the solution of \eqref{MainSPDE}, for the different
initial data $u \in C_{D}([0,1])$. We now state our results. In the
following we write $P_u$ to denote the distribution of $U^{u,\varepsilon}$. Whenever we
choose to make the initial datum clear in this way, we will drop the superscript $u$ from the notation for simplicity purposes.

Our first result is concerned with the continuity of the explosion time for
initial data in the domain of explosion $\mathcal{D}_e$. In this case one
expects the stochastic and deterministic systems to both exhibit a similar
behavior for $\varepsilon > 0$ sufficiently small, since then the noise will not
be able to grow fast enough so as to overpower the quickly exploding source
term.
We show this to be truly the case for $u \in \mathcal{D}_e$ such that
\mbox{$U^u$ remains bounded from one side.}

\begin{theorem}\label{thm1} Let $\mathcal{D}^*_e$ be the set of
initial data $u \in \mathcal{D}_e$ such that $U^u$ explodes only through one
side, i.e. $U^{u}$ remains bounded either from below or above until its
explosion time $\tau^u$. Then given $\delta > 0$ and a bounded set $\mathcal{K}
\subseteq \mathcal{D}_e^*$ at a positive distance from $\p \mathcal{D}^*_e$
there exists a constant $C > 0$ such that
$$
\sup_{u \in \mathcal{K}} P_u ( |\tau_\varepsilon - \tau| > \delta ) \leq
e^{-\frac{C}{\varepsilon^2}}.
$$
\end{theorem}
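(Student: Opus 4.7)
My plan is to decompose the bad event as $\{\tau_\varepsilon < \tau - \delta\} \cup \{\tau_\varepsilon > \tau + \delta\}$ and bound each piece by $e^{-C/\varepsilon^2}$ uniformly in $u \in \mathcal{K}$. The hypothesis that $\mathcal{K}$ is bounded and at positive distance from $\partial \mathcal{D}_e^*$ will yield uniform constants $N_0, L > 0$ with $\sup_{u \in \mathcal{K}} \sup_{t \in [0, \tau^u - \delta/2]} \|U^u(t, \cdot)\|_\infty \leq N_0$ and, after using the invariance $U \mapsto -U$ of \eqref{MainPDE} (valid since $g$ is odd) to reduce to the case where each $U^u$ stays bounded from below, also $\inf_{u \in \mathcal{K}} \inf_{t \in [0, \tau^u)} \inf_{x \in [0,1]} U^u(t, x) \geq -L$. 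For the too-early event I will simply apply \eqref{grandes1} with $T = \tau^u - \delta/2$, $n = N_0 + 2$ and tolerance $1$: on the complement of a set of probability $\leq e^{-C/\varepsilon^2}$ one has $\mathcal{T}^{(n),u}_\varepsilon \geq \tau^u - \delta/2$, and in particular $\tau_\varepsilon > \tau^u - \delta$.

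The too-late event is the heart of the matter, and I will handle it by a Kaplan-type projection onto the first Dirichlet eigenfunction. Let $\phi_1 > 0$ solve $-\phi_1'' = \lambda_1 \phi_1$ on $(0, 1)$ with Dirichlet boundary conditions, normalized so that $\int_0^1 \phi_1 = 1$, and set $A_\varepsilon(t) := \int_0^1 U^{u, \varepsilon}(t, x) \phi_1(x) \, dx$ and $A^u(t) := \int_0^1 U^u(t, x) \phi_1(x) \, dx$. Jensen's inequality applied to $v \mapsto (v + L)^p$, combined with the lower bound on $U^u$, shows that $A^u(t) \to +\infty$ as $t \nearrow \tau^u$, so uniformly on $\mathcal{K}$ I can fix $T_* < \min_{u \in \mathcal{K}} \tau^u$ with $\tau^u - T_* < \delta/4$ and $A^u(T_*) \geq M$, where $M$ is a large constant to be chosen. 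A further application of \eqref{grandes1} on $[0, T_*]$, which is still inside the region of uniform boundedness of the deterministic flow, then yields, off an event of probability $\leq e^{-C/\varepsilon^2}$, the bounds $A_\varepsilon(T_*) \geq M - 1$ and $\inf_x U^{u, \varepsilon}(T_*, x) \geq -L - 1$.

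From $T_*$ onwards I introduce the stopping time $\sigma := \inf\{ t \geq T_* : \inf_x U^{u, \varepsilon}(t, x) < -L - 2 \}$ and project the SPDE \eqref{MainSPDE} against $\phi_1$ to obtain the Kaplan identity
\[ dA_\varepsilon = \Bigl( -\lambda_1 A_\varepsilon + \int_0^1 g(U^{u, \varepsilon}) \phi_1 \, dx \Bigr) dt + \varepsilon \, dB_t, \qquad B_t := \int_0^t \int_0^1 \phi_1(x) \, dW(s, x), \]
with $B$ a Brownian motion of variance $\|\phi_1\|_2^2 \, t$, valid on $[T_*, \sigma \wedge \tau_\varepsilon)$. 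Using $U^{u, \varepsilon} \geq -L - 2$ on this interval and Jensen applied to $v \mapsto (v + L + 2)^p$, the drift is bounded below by $c A_\varepsilon^p - C_0$ for explicit constants $c, C_0 > 0$. For $M$ large the deterministic ODE $y' = c y^p - C_0$, $y(0) = M - 1$, blows up within time $\delta/8$, and since $\varepsilon B$ has Gaussian tails of standard deviation $O(\varepsilon)$ on $[T_*, T_* + \delta/2]$, a direct stochastic comparison forces $\tau_\varepsilon \leq T_* + \delta/4$ off an exponentially small event. Lastly, the event $\{\sigma < T_* + \delta/2\}$ is again exponentially rare by one more application of \eqref{grandes1} to the bounded solution before $\sigma$. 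The hardest step will be controlling the interaction between the two stopping times $\sigma$ and $\tau_\varepsilon$: the Kaplan argument needs a uniform lower bound on $U^{u, \varepsilon}$, while the blow-up of $A_\varepsilon$ is exactly what eventually threatens that bound. The one-sidedness built into $\mathcal{D}_e^*$, combined with \eqref{grandes1}, is what keeps these two competing effects separated over the short interval on which $A_\varepsilon$ actually diverges.
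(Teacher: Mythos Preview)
Your treatment of the early-explosion event $\{\tau_\varepsilon<\tau-\delta\}$ is correct and coincides with the paper's Proposition~\ref{convergenciainferior0}: a uniform $L^\infty$ bound on $U^u$ over $[0,\tau^u-\delta]$ combined with \eqref{grandes1} does the job.

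For the late-explosion event your Kaplan/eigenfunction route is genuinely different from the paper's argument. The paper does \emph{not} project onto $\phi_1$; instead it writes $U^{u,\varepsilon}=Z^{u,\varepsilon}+V^{\mathbf 0,\varepsilon}$ with $V^{\mathbf 0,\varepsilon}$ the stochastic heat flow, observes that $Z^{u,\varepsilon}$ is a supersolution of a deterministic PDE with modified potential $\underline S^{(h)}$ whenever $\|V^{\mathbf 0,\varepsilon}\|_\infty$ is small, and then proves an explicit explosion-time bound for that deterministic PDE via an energy/$L^2$ differential inequality (Lemma~\ref{expestimate}). The only stochastic input at the end is the Gaussian tail of $\sup_{[0,\delta]}\|V^{\mathbf 0,\varepsilon}\|_\infty$.

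Your Kaplan scheme is appealing, but as written it has a real gap at exactly the point you flag as hardest. You claim that $\{\sigma<T_*+\delta/2\}$ is exponentially rare ``by one more application of \eqref{grandes1} to the bounded solution before $\sigma$''. This does not work: the estimate \eqref{grandes1} controls $d_{T\wedge\mathcal T^{(n),u}_\varepsilon}(U^{u,\varepsilon},U^u)$, i.e.\ only up to the first exit of \emph{either} process from $B_n$. On the interval where you need the lower bound, the deterministic $U^u$ is already blowing up (you chose $\tau^u-T_*<\delta/4$), so $\tau^{(n),u}<\tau^u<T_*+\delta/4$ for every $n$, and \eqref{grandes1} gives no information past $\tau^u$. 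Likewise $U^{u,\varepsilon}$ is (hopefully) diverging to $+\infty$, so $\tau^{(n),u}_\varepsilon$ is small too. In short, neither process is bounded on $[T_*,T_*+\delta/2]$, and the local LDP simply does not apply there. To rescue the argument you need an independent one-sided control of $\inf_x U^{u,\varepsilon}$ in the blow-up window; the natural way is precisely the paper's device of subtracting $V^{\mathbf 0,\varepsilon}$ and comparing the resulting random PDE to a deterministic subsolution (cf.\ \eqref{eqcomparacion}--\eqref{convsupeq2}). Once you import that idea your Kaplan inequality becomes redundant, because the comparison already forces blow-up of $Z^{u,\varepsilon}$ directly.

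Two smaller points. First, a single $T_*<\min_{u\in\mathcal K}\tau^u$ with $\tau^u-T_*<\delta/4$ for \emph{all} $u\in\mathcal K$ need not exist, since $\tau^u$ can vary over $\mathcal K$ by more than $\delta/4$; you should take $T_*^u$ depending on $u$ (e.g.\ $T_*^u=\mathcal T_M^u$ as in the paper) and then argue uniformity of the subsequent constants. Second, ``Jensen applied to $(v+L)^p$'' does not by itself show $A^u(t)\to+\infty$: the differential inequality you derive gives a lower bound on $\dot A^u$, which is consistent with $A^u$ staying bounded. You need an extra input (e.g.\ $S(U^u(t,\cdot))\to-\infty$ from Proposition~\ref{caract}, plus an argument that blow-up in $L^{p+1}$ with a uniform lower bound forces $A^u\to+\infty$) to conclude that $A^u(T_*^u)$ can be made arbitrarily large.
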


The main differences in behavior between the stochastic and deterministic systems \mbox{appear for} initial data in $\mathcal{D}_\0$, where
metastable behavior is observed. According to the characterization of
metastability for stochastic processes in \cite{CGOV, GOV}, this behavior is given by two facts: the time averages of the process
remain stable until an abrupt transition occurs and a different value is
attained; furthermore, the time of this transition is unpredictable in the sense
that, when suitably rescaled, it should have an exponential distribution. \mbox{We
manage to establish} this description rigorously for our system whenever $1 < p <
5$. This rigorous description is contained in the remaining results.

Define the quantity $\Delta := 2(S(z) - S(\mathbf{0}))$. Our next result states that for any $u \in \mathcal{D}_{\mathbf{0}}$ the asymptotic magnitude of $\tau^u_\varepsilon$ is, up to logarithmic equivalence, of order $e^{\frac{\Delta}{\varepsilon^2}}$.

\begin{theorem}\label{thm2} Given $\delta > 0$ and
a bounded set $\mathcal{K} \subseteq \mathcal{D}_{\mathbf{0}}$ at a positive
distance from $\p \mathcal{D}_{\mathbf{0}}$, if $1 < p < 5$ then we have
$$
\lim_{\varepsilon \rightarrow 0} \left[ \sup_{u \in \mathcal{K}} \left| P_u
\left( e^{\frac{\Delta - \delta}{\varepsilon^2}} < \tau_\varepsilon <
e^{\frac{\Delta + \delta}{\varepsilon^2}}\right)-1\right|\right]=0.
$$
\end{theorem}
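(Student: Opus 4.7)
The plan is to decompose the statement into the lower bound $\tau_\varepsilon > e^{(\Delta-\delta)/\varepsilon^2}$ and the upper bound $\tau_\varepsilon < e^{(\Delta+\delta)/\varepsilon^2}$, and to prove each with high probability uniformly in $u\in\mathcal{K}$. The key geometric input is that the quasi-potential from $\mathbf{0}$ to $\partial\mathcal{D}_{\mathbf{0}}$ equals $\Delta=2(S(z)-S(\mathbf{0}))$, since among the unstable equilibria the minimal saddle $z$ achieves $\inf_{\partial\mathcal{D}_{\mathbf{0}}}S$. The key analytical obstacle, which is absent in the double-well setting of \cite{B1,MOS}, is that Freidlin--Wentzell estimates only apply locally (Theorem in Section \ref{secLDP}), so one cannot directly study exit from $\mathcal{D}_{\mathbf{0}}$; instead one works with the auxiliary bounded domain $G$ constructed in Section \ref{secg}, chosen so that $\mathbf{0}\in \mathrm{int}(G)$, $\partial G\cap\mathcal{D}_e$ is well-separated from $\mathbf{0}$, and the barrier to exit $G$ still equals $\Delta$.

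For the \emph{upper bound}, the strategy is the classical many-attempts argument. First, by Theorem \ref{thm1}'s companion upper bound \eqref{grandes1} and the fact that $U^u$ with $u\in\mathcal{K}\subset\mathcal{D}_{\mathbf{0}}$ enters any prescribed ball $B_{\rho}(\mathbf{0})$ in time $T_0=T_0(\mathcal{K},\rho)$, we have $U^{u,\varepsilon}(T_0,\cdot)\in B_{\rho}(\mathbf{0})$ with probability $\geq 1-e^{-C/\varepsilon^2}$. Starting from $B_{\rho}(\mathbf{0})$, by the Freidlin--Wentzell lower bound \eqref{LDP1} applied to a quasi-optimal trajectory that follows the reverse of the heat-flow from $\mathbf{0}$ up to $z$ and then slides into $\mathcal{D}_e$, one produces in time $T_1$ a lower bound
\[
 p_\varepsilon:=\inf_{v\in B_\rho(\mathbf{0})} P_v\bigl(U^{\cdot,\varepsilon}(T_1,\cdot)\in \partial G\cap\mathcal{D}_e\bigr)\geq e^{-(\Delta+\delta/4)/\varepsilon^2}.
\]
Dividing $[0,e^{(\Delta+\delta)/\varepsilon^2}]$ into $N_\varepsilon\geq e^{\delta/(2\varepsilon^2)}$ cycles of length $T_0+T_1$, each attempted independently via the strong Markov property and each preceded by a re-thermalization, the probability of failing in every cycle is at most $(1-p_\varepsilon)^{N_\varepsilon}\to 0$. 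Once the process reaches $\partial G\cap \mathcal{D}_e$ at controlled distance from $\partial\mathcal{D}_e^*$, Theorem \ref{thm1} yields explosion in a further bounded time with overwhelming probability.

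For the \emph{lower bound}, the goal is to show that, with probability tending to $1$, no trajectory starting in $\mathcal{K}$ can leave $G$ through $\partial G\cap\mathcal{D}_e$ before time $e^{(\Delta-\delta)/\varepsilon^2}$. Following \cite{FW,B1}, one introduces a sequence of stopping times $(\sigma_k,\theta_k)$ where $\sigma_k$ is the $k$-th entry to $B_\rho(\mathbf{0})$ and $\theta_k>\sigma_k$ is the first subsequent exit from a slightly larger ball $B_{2\rho}(\mathbf{0})$. During each excursion in $[\theta_k,\sigma_{k+1}]$ the process either returns to $B_\rho(\mathbf{0})$ or exits $G$, and by the upper-bound half of the local LDP together with a compactness argument applied to the level sets of $I^u_T$ (any trajectory of cost $<\Delta-\delta/2$ that starts near $\mathbf{0}$ cannot reach $\partial G\cap \mathcal{D}_e$ in bounded time), the probability of exiting $G$ in any one excursion is at most $e^{-(\Delta-\delta/2)/\varepsilon^2}$. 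Combined with a uniform upper bound on $\mathbb{E}[\sigma_{k+1}-\sigma_k]$ that is polynomial in $\varepsilon^{-1}$, a Borel--Cantelli / geometric comparison forces the total time to exceed $e^{(\Delta-\delta)/\varepsilon^2}$ with probability $\to 1$.

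The hardest part is the large deviations upper bound in this last paragraph, and the compactness/continuity of the rate function $I^u_T$ needed to turn the pointwise inequality $I^u_T(\varphi)\geq \Delta$ (for $\varphi$ joining $\mathbf{0}$ to $\partial G\cap\mathcal{D}_e$) into a uniform bound over a tube around any such trajectory; this is where the restriction $1<p<5$ enters, since it is required to ensure the necessary compactness and continuity properties of $S$ and of the minimizing trajectories on $H^1_0((0,1))$, as developed in the analysis of the deterministic flow in Section \ref{deter} and of the geometry of $G$ in Section \ref{secg}. Once these ingredients are in place, the two bounds combine to yield the uniform convergence claimed in the theorem.
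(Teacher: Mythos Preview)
Your proposal is correct in outline and shares the same architecture as the paper: work in the auxiliary bounded domain $G$ of Section~\ref{secg}, first bring the process from $\mathcal{K}$ into a small ball around $\mathbf{0}$, then control the exit from $G$ via Freidlin--Wentzell cycle arguments, and finally close the gap between exit and explosion. The paper, however, packages the core Freidlin--Wentzell step as a black box (Theorem~\ref{ecotsuplema1}, proved in \cite{SJS}) and then derives Theorem~\ref{thm2} in one line from it together with Lemma~\ref{taubc} (the process reaches $B_c$ before exiting $\mathcal{D}_{\mathbf{0}}$, uniformly over $\mathcal{K}$) and Lemma~\ref{nescapelema3} (once the process exits $G$ through $\partial^{\pm z}$, explosion follows within a fixed time $\tau^*$). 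What you have written is essentially a sketch of the proof of Theorem~\ref{ecotsuplema1} unrolled inside the proof of Theorem~\ref{thm2}; this is fine, but you should be aware that the paper's modular reduction is cleaner and avoids redoing the excursion argument.

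Two points deserve sharpening. First, your description of where $1<p<5$ enters is slightly misplaced: it is not used for compactness of level sets of the rate function $I^u_T$, but solely for Lemma~\ref{compacidad}, which guarantees that $\{u\in\overline{\mathcal{D}_{\mathbf{0}}}:S(u)\le a\}$ is bounded in $C([0,1])$; this is precisely what allows one to build a \emph{bounded} domain $G$ for which $V(\mathbf{0},\partial G)=\Delta$ and the escape is forced through $\partial^{\pm z}$. Second, for the lower bound you should make explicit the trivial but essential inequality $\tau_\varepsilon\ge \tau_\varepsilon(\partial G)$ (since $G$ is bounded, explosion cannot occur inside $G$), so that the lower bound on the exit time from $G$ transfers directly to $\tau_\varepsilon$; this is what the paper uses implicitly, and it saves you from having to distinguish exits through $\partial G\cap\mathcal{D}_e$ from other exits in that half of the argument.
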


Theorem \ref{thm2} suggests that, for initial data $u \in
\mathcal{D}_{\mathbf{0}}$, the typical route of $U^{u,\varepsilon}$ towards
infinity involves passing through one of the unstable equilibria of minimal energy,
$\pm z$. This seems reasonable since, as we will see in Section \ref{secg}, for
$1 < p < 5$ the barrier imposed by the potential $S$ is the lowest there. The
following result establishes this fact rigorously.

\begin{theorem}\label{thm3} Given $\delta > 0$ and a bounded set
$\mathcal{K} \subseteq \mathcal{D}_{\mathbf{0}}$ at a positive distance from $\p
\mathcal{D}_{\mathbf{0}}$, if $1 < p < 5$ then we have
$$
\lim_{\varepsilon \rightarrow 0} \left[ \sup_{u \in \mathcal{K}} \left| P_u
\left( \tau_\varepsilon( \mathcal{D}_{\mathbf{0}}^c ) < \tau_\varepsilon\,,\,
U^{\varepsilon}( \tau_\varepsilon( \mathcal{D}_{\mathbf{0}}^c), \cdot) \in
B_\delta (\pm z) \right)-1\right|\right]=0,
$$ where $\tau^u_\varepsilon( \mathcal{D}_{\mathbf{0}}^c):= \inf\{ t > 0 :
U^{u,\varepsilon}(t,\cdot) \notin \mathcal{D}_{\mathbf{0}} \}$ and $B_\delta(\pm
z):= B_\delta (z) \cup B_\delta(-z)$.
\end{theorem}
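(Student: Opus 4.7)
The plan is to combine Theorem \ref{thm2} with a variational analysis of the potential $S$ restricted to $\partial \mathcal{D}_{\mathbf{0}}$ and with the local large deviations estimates \eqref{LDP1}--\eqref{grandes1} applied on the auxiliary bounded domain $G$ of Section \ref{secg}. Since the first event $\{\tau_\varepsilon(\mathcal{D}_{\mathbf{0}}^c) < \tau_\varepsilon\}$ is a consequence of the fact that $U^{u,\varepsilon}$ must cross $\partial \mathcal{D}_{\mathbf{0}}$ in order to explode (any blow-up trajectory leaves $\mathcal{D}_{\mathbf{0}}$, whose interior only contains globally defined trajectories of \eqref{MainPDE}), the real content lies in identifying $\pm z$ as the only exit \emph{gates}. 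I would therefore reduce the problem to the following statement: for every small $\delta>0$ there exists $\eta>0$ such that, uniformly in $u\in\mathcal{K}$,
$$
P_u\left( \tau_\varepsilon(\partial \mathcal{D}_{\mathbf{0}}) \le e^{\frac{\Delta+\eta}{\varepsilon^2}},\ U^{\varepsilon}(\tau_\varepsilon(\partial \mathcal{D}_{\mathbf{0}}),\cdot)\notin B_\delta(\pm z)\right)\xrightarrow[\varepsilon\to 0]{}0,
$$
since the complementary inequality $\tau_\varepsilon(\partial \mathcal{D}_{\mathbf{0}}) \le e^{(\Delta+\eta/2)/\varepsilon^2}$ already follows from Theorem \ref{thm2} together with the observation that an explosion requires a prior escape from $\mathcal{D}_{\mathbf{0}}$.

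The variational input is the quasi-potential identity
$$
V(\mathbf{0},v)\;:=\;\inf\bigl\{I_T^{\mathbf{0}}(\varphi)\ :\ T>0,\ \varphi(T,\cdot)=v\bigr\}\;=\;2\bigl(S(v)-S(\mathbf{0})\bigr),
$$
valid for every $v$ in the connected component of $\mathbf{0}$ in $\{S<\infty\}$ because \eqref{MainPDE} is a gradient flow for $S$ (the standard ``down-up'' argument: descend by the flow to $\mathbf{0}$, then reverse a well-chosen path). Combined with the description of the unstable equilibria as $\pm z,\pm z^{(2)},\pm z^{(3)},\dots$ to be proved in Section \ref{deter}, together with the strict ordering $S(\pm z)<S(z^{(k)})$ for $|k|\ge 2$, a compactness argument on level sets of $S\!\upharpoonright_{\partial\mathcal{D}_{\mathbf{0}}}$ yields the crucial gap: for every $\delta>0$ there is $\eta=\eta(\delta)>0$ with
$$
\inf\bigl\{\,S(v)-S(\mathbf{0})\ :\ v\in\partial\mathcal{D}_{\mathbf{0}}\setminus B_\delta(\pm z)\,\bigr\}\;\ge\;\tfrac{\Delta}{2}+\eta.
$$
The assumption $1<p<5$ enters here through the subcritical Sobolev embedding, which ensures $S$ is lower semicontinuous on $H^1_0$ with good compactness properties for its sublevel sets, so that any such infimum is actually attained at a critical point and the strict separation above is not empty.

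To turn this into the claimed probabilistic bound I would work on the bounded set $G$ from Section \ref{secg}, which satisfies $\mathcal{D}_{\mathbf{0}} \cap G \supseteq \mathcal{K}$ and whose boundary is a ``thickening'' of $\partial \mathcal{D}_{\mathbf{0}}$ suited to applying the local LDP. Split the trajectory into independent excursions, one per return to a small ball $B_\rho(\mathbf{0})$ followed by an escape from $B_\rho(\mathbf{0})$ hitting either $B_\delta(\pm z)$ or $\partial G\setminus B_\delta(\pm z)$. Using the local upper bound \eqref{grandes1} and a standard exponential tilting from the quasi-potential identity (implemented via \eqref{LDP1} along almost-optimal paths), the probability that any given excursion realises an escape of the second (``bad'') type is at most $\exp(-(\Delta+2\eta)/(2\varepsilon^2))$ up to sub-exponential factors, while the typical number of excursions needed before $\tau_\varepsilon(\partial\mathcal{D}_{\mathbf{0}})$ is of order $\exp((\Delta+o(1))/\varepsilon^2)$ by Theorem \ref{thm2}. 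A union bound over excursions, combined with strong Markov, then gives the required vanishing probability.

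The main obstacle is precisely this last step: reconciling the \emph{local} large deviations principle with the \emph{metastable} time scale $e^{\Delta/\varepsilon^2}$ (over which the process visits unbounded regions repeatedly) and doing so \emph{uniformly} in $u\in\mathcal{K}$. This forces one to control excursions emanating from arbitrary initial data on a ``good'' set near $\mathbf{0}$, to bound from below the probability of the attraction $B_\rho(\mathbf{0})$ being reached before $\partial G$, and to rule out explosions occurring during a single excursion before the process has had a chance to descend close to $\mathbf{0}$, so that $\tau_\varepsilon(\mathcal{D}_{\mathbf{0}}^c) < \tau_\varepsilon$ and the first exit is indeed the one that counts. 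These ingredients must be assembled from the escape-from-$G$ analysis of Section \ref{secescapedeg} together with Theorem \ref{thm1} applied to the post-exit trajectory to ensure that, once in $B_\delta(\pm z)$, the process either proceeds to explode or returns to $\mathcal{D}_{\mathbf{0}}$ without spoiling the measurement of the exit location.
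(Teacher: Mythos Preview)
Your outline captures the right variational ingredients (the quasipotential gap away from $\pm z$, excursion decomposition, union bound over returns to $B_\rho(\mathbf{0})$), but it amounts to re-deriving from scratch what the paper has already packaged into Theorem~\ref{teoescape0}. The paper's own argument is much shorter: first apply Lemma~\ref{taubc} with $\rho=c$ to bring the process into $B_c$ before it leaves $\mathcal{D}_{\mathbf{0}}$ or explodes (this handles arbitrary $\mathcal{K}$ uniformly), then invoke \eqref{ecota0} directly, noting that for $\delta$ small enough $B_\delta(\pm z)$ lies in the interior of $B_{n_0}$, so that an exit from $\tilde{G}=B_{n_0}\cap\overline{\mathcal{D}_{\mathbf{0}}}$ landing in $B_\delta(\pm z)$ must be through $\mathcal{W}=\partial\mathcal{D}_{\mathbf{0}}$ and hence coincides with $\tau_\varepsilon(\mathcal{D}_{\mathbf{0}}^c)$. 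All the large-deviation and excursion machinery you sketch is hidden inside the proof of Theorem~\ref{teoescape0}, which the paper cites as a black box (with details in \cite{SJS}).

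Two points in your write-up need tightening. First, the assertion ``$\mathcal{D}_{\mathbf{0}}\cap G\supseteq\mathcal{K}$'' is false in general: $G$ is a fixed bounded domain while $\mathcal{K}$ is an arbitrary bounded subset of $\mathcal{D}_{\mathbf{0}}$, so you must first drive the process into $B_c$ exactly as the paper does via Lemma~\ref{taubc}. Second, your justification that $\tau_\varepsilon(\mathcal{D}_{\mathbf{0}}^c)<\tau_\varepsilon$ because ``$\mathcal{D}_{\mathbf{0}}$ only contains globally defined trajectories of \eqref{MainPDE}'' conflates the deterministic and stochastic systems: $\mathcal{D}_{\mathbf{0}}$ is unbounded, and there is no a~priori reason the \emph{stochastic} process cannot blow up without first touching $\mathcal{W}$. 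The paper avoids this issue entirely by working with the bounded surrogate $\tilde{G}$: the exit from $\tilde{G}$ through $B_\delta(\pm z)\subseteq\mathrm{int}(B_{n_0})$ necessarily occurs at a finite time strictly before explosion and is automatically an exit through $\mathcal{W}$, so both conclusions of the theorem drop out simultaneously with no separate argument needed.
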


Our next result is concerned with the the asymptotic loss of memory of
$\tau^u_\varepsilon$. For $\varepsilon > 0$ define the scaling coefficient
\begin{equation}\label{defibeta}
\beta_{\varepsilon}= \inf \{ t \geq 0 : P_{\mathbf{0}} ( \tau_\ve > t ) \leq
e^{-1} \}
\end{equation}
Observe that Theorem \ref{thm2} implies that the family $(\beta_\varepsilon)_{\varepsilon >
0}$
satisfies $\lim_{\varepsilon \rightarrow 0} \varepsilon
^{2}\log\beta_{\varepsilon} = \Delta$. This next result states that for any $u \in \mathcal{D}_{\mathbf{0}}$ the normalized explosion time
$\frac{\tau^u_\varepsilon}{\beta_\varepsilon}$ converges in distribution to an exponential random variable of mean one.

\begin{theorem}
 \label{thm4}
Given $\delta > 0$ and a bounded set $\mathcal{K}
\subseteq \mathcal{D}_{\mathbf{0}}$ at a positive distance from $\p
\mathcal{D}_{\mathbf{0}}$, if $1 < p < 5$ then for any $t > 0$ we have
$$
\lim_{\varepsilon \rightarrow 0} \left[ \sup_{u \in \mathcal{K}} \left| P_{u}
(\tau_{\varepsilon} > t\beta_{\varepsilon}) - e^{-t} \right| \right] = 0.
$$
\end{theorem}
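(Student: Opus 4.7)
The plan is to derive the convergence of $\tau_\varepsilon/\beta_\varepsilon$ to a unit exponential from two ingredients: an asymptotic loss-of-memory property for $\tau_\varepsilon$ under $P_{\mathbf{0}}$, and a reduction allowing us to replace an arbitrary initial datum $u \in \mathcal{K}$ by $\mathbf{0}$.

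First I would establish the reduction. The deterministic trajectories $U^u$ with $u \in \mathcal{K}$ enter any prescribed ball $B_\rho$ around $\mathbf{0}$ within a time $T_\rho$ that is bounded uniformly in $u \in \mathcal{K}$, and by the localized large deviations upper bound \eqref{grandes1} the perturbed process $U^{u,\varepsilon}$ enters $B_\rho$ within $T_\rho$ (without exploding) with probability $1-o(1)$. Since by Theorem \ref{thm2} we have $\varepsilon^{2}\log\beta_{\varepsilon} \to \Delta$, in particular $T_\rho \ll \beta_\varepsilon$, so applying the strong Markov property at the hitting time of $B_\rho$ reduces the problem to proving
\[
\lim_{\varepsilon \to 0} \sup_{v \in B_\rho} \bigl| P_v(\tau_\varepsilon > t\beta_\varepsilon) - e^{-t}\bigr| = 0,
\]
and iterating the same idea inside $B_\rho$ further reduces everything to the case $v=\mathbf{0}$.

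Next I would prove the asymptotic memoryless property under $P_{\mathbf{0}}$: for each $s,t > 0$,
\[
\lim_{\varepsilon \to 0} \Bigl| P_{\mathbf{0}}(\tau_\varepsilon > (s+t)\beta_\varepsilon) - P_{\mathbf{0}}(\tau_\varepsilon > s\beta_\varepsilon)\,P_{\mathbf{0}}(\tau_\varepsilon > t\beta_\varepsilon)\Bigr| = 0.
\]
Conditioning on $\F_{s\beta_\varepsilon}$ and using the strong Markov property yields
\[
P_{\mathbf{0}}(\tau_\varepsilon > (s+t)\beta_\varepsilon) = \E_{\mathbf{0}}\Bigl[\mathbf{1}_{\{\tau_\varepsilon > s\beta_\varepsilon\}}\, P_{U^{\varepsilon}(s\beta_\varepsilon,\cdot)}(\tau_\varepsilon > t\beta_\varepsilon)\Bigr],
\]
so the task becomes to show that, conditional on $\{\tau_\varepsilon > s\beta_\varepsilon\}$, the random state $U^{\varepsilon}(s\beta_\varepsilon,\cdot)$ lies in $B_\rho(\mathbf{0})$ with probability $1-o(1)$. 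By Theorem \ref{thm3} any excursion out of $\mathcal{D}_{\mathbf{0}}$ takes the process through a neighborhood of $\pm z$, and by Theorem \ref{thm1} (applied at points just past $\pm z$ in $\mathcal{D}_e^*$) such an excursion is followed by an explosion in time negligible compared with $\beta_\varepsilon$. Hence conditioning on non-explosion at time $s\beta_\varepsilon$ forces the process to still lie in $\mathcal{D}_{\mathbf{0}}$, and combining the attraction of the deterministic flow inside $\mathcal{D}_{\mathbf{0}}$ with the uniform reduction step above then pushes it into $B_\rho(\mathbf{0})$.

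Finally, given the asymptotic memoryless property and the definition of $\beta_\varepsilon$ as the $(1-e^{-1})$-quantile of $\tau_\varepsilon$ under $P_{\mathbf{0}}$, a standard elementary argument (as in the analogous step of \cite{CGOV,GOV}) yields $P_{\mathbf{0}}(\tau_\varepsilon > t\beta_\varepsilon) \to e^{-t}$ for every $t > 0$, which combined with the reduction step proves Theorem \ref{thm4}. I expect the main obstacle to be the ``return to $B_\rho$'' claim used in the memoryless property, as one must quantitatively describe the distribution of $U^{\varepsilon}(s\beta_\varepsilon,\cdot)$ on the whole conditional event $\{\tau_\varepsilon > s\beta_\varepsilon\}$ rather than only at a first hitting time. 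This will likely require partitioning the event via successive hitting times of $B_\rho$ and of a larger ``safe'' region avoiding the stable manifolds of the unstable equilibria, together with a uniform-in-$v$ version of Theorems \ref{thm2} and \ref{thm3} to bootstrap the estimate.
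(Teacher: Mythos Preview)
Your reduction from $\mathcal{K}$ to a small ball around $\mathbf{0}$ matches the paper's Lemma~\ref{taubc}, but from there on the paper proceeds quite differently. Rather than proving the memoryless property directly for the explosion time $\tau_\varepsilon$, the paper first introduces an auxiliary \emph{bounded} domain $G$ (Section~\ref{secg}) and establishes the exponential limit for the exit time $\tau_\varepsilon(\partial G)$ (Theorem~\ref{asympteog}); the loss of memory \eqref{couple} is obtained there by a coupling of solutions as in \cite{M}, which relies essentially on the process being confined to a bounded region. It then shows (Theorem~\ref{teoescape0} and Lemma~\ref{nescapelema3}) that escape from $G$ typically occurs through $\partial^{\pm z}\subseteq\mathcal{D}_e^*$ and is followed by explosion within a fixed time $\tau^*$, so that $\tau_\varepsilon(\partial G)\le\tau_\varepsilon\le\tau_\varepsilon(\partial G)+\tau^*$ with high probability and hence $\beta_\varepsilon/\gamma_\varepsilon\to 1$. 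The exponential limit then transfers trivially from $\tau_\varepsilon(\partial G)/\gamma_\varepsilon$ to $\tau_\varepsilon/\beta_\varepsilon$.

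The obstacle you correctly flag --- controlling the distribution of $U^\varepsilon(s\beta_\varepsilon,\cdot)$ on the whole event $\{\tau_\varepsilon>s\beta_\varepsilon\}$ --- is precisely what the auxiliary domain sidesteps: on $\{\tau_\varepsilon(\partial G)>s\gamma_\varepsilon\}$ the process is automatically inside the bounded set $G$, and the coupling does the rest without any need to argue that the unexploded process must be near $\mathbf{0}$. Your proposed direct route has a genuine gap as written: invoking Theorems~\ref{thm2} and~\ref{thm3} to bootstrap the estimate is circular (in the paper these are themselves consequences of the analysis on $G$), and in any case Theorem~\ref{thm3} only controls the \emph{first} exit from $\mathcal{D}_{\mathbf{0}}$, not the location at the deterministic later time $s\beta_\varepsilon$. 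Likewise, ``iterating the same idea inside $B_\rho$'' to pass from $v\in B_\rho$ to $v=\mathbf{0}$ is not free --- it is exactly the uniformity statement \eqref{couple}, which in this infinite-dimensional setting the paper obtains via coupling rather than by iteration. To make your approach rigorous you would essentially have to rebuild the bounded domain $G$ and its exit-time analysis.
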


Finally, we show the stability of time averages of continuous functions
evaluated along paths of the process starting in $\mathcal{D}_{\mathbf{0}}$,
i.e. they remain close to the value of the \mbox{function at $\mathbf{0}$.}
These time averages are taken along intervals of length going to infinity and
times may be taken as being almost (in a suitable scale) the explosion time.
This tells us that, up until the explosion time, the system spends most of its
time in a small \mbox{neighborhood of $\mathbf{0}$.}

\begin{theorem}\label{thm5} There exists a sequence
$(R_\varepsilon)_{\varepsilon > 0}$ with $\lim_{\varepsilon \rightarrow 0}
R_\varepsilon = +\infty$ and $\lim_{\varepsilon \rightarrow 0}
\frac{R_\varepsilon}{\beta_\varepsilon} = 0$ such that given $\delta > 0$ for
any bounded set $\mathcal{K} \subseteq \mathcal{D}_{\mathbf{0}}$ at a positive
\mbox{distance from $\mathcal{W}$} we have
$$
\lim_{\varepsilon \rightarrow 0} \left[ \sup_{u \in \mathcal{K}} P_u \left(
\sup_{0 \leq t \leq \tau_\varepsilon - 3R_\varepsilon}\left|
\frac{1}{R_\varepsilon}\int_t^{t+R_\varepsilon} f(U^{\varepsilon}(s,\cdot))ds -
f(\mathbf{0})\right| > \delta \right) \right] = 0
$$ for any bounded continuous function $f: C_D([0,1]) \rightarrow \R$.
\end{theorem}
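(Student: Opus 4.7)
The plan is to reduce the claim to showing that the process spends nearly all of any window $[t, t+R_\varepsilon]$ (with $t \leq \tau_\varepsilon - 3R_\varepsilon$) inside a small $C_D$-neighborhood of $\mathbf{0}$. Fix $\rho > 0$ small enough that $B_{2\rho} \subset \mathcal{D}_{\mathbf{0}}$ and $|f(v) - f(\mathbf{0})| < \delta/2$ for every $v \in B_{2\rho}$, which is possible by continuity of $f$ at $\mathbf{0}$. Then
\[
\left|\frac{1}{R_\varepsilon}\int_t^{t+R_\varepsilon} f(U^\varepsilon(s,\cdot))\,ds - f(\mathbf{0})\right| \leq \frac{\delta}{2} + \frac{2\|f\|_\infty}{R_\varepsilon}\,\Lambda_\varepsilon(t),
\]
where $\Lambda_\varepsilon(t) := \lambda\!\left(\{s \in [t,t+R_\varepsilon]: U^\varepsilon(s,\cdot) \notin B_{2\rho}\}\right)$ and $\lambda$ is Lebesgue measure on $\R$. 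It then suffices to show that $\sup_{t}\Lambda_\varepsilon(t)/R_\varepsilon \to 0$ in $P_u$-probability, uniformly for $u \in \mathcal{K}$.

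To bound $\Lambda_\varepsilon(t)$, I would introduce an excursion structure: set $\tau_0 := \inf\{s : U^\varepsilon(s,\cdot) \in B_\rho\}$ and recursively $\eta_k := \inf\{s > \tau_k : U^\varepsilon(s,\cdot) \notin B_{2\rho}\}$, $\tau_{k+1} := \inf\{s > \eta_k : U^\varepsilon(s,\cdot) \in B_\rho\}$. Using the strong Markov property, the upper bound \eqref{grandes1} and the attraction of the deterministic flow towards $\mathbf{0}$ from Section \ref{deter}, I would establish: (i) $P_u(\tau_0 \leq T_\mathcal{K}) \geq 1 - e^{-c/\varepsilon^2}$ uniformly for $u \in \mathcal{K}$, since the trajectories $U^u$ reach $B_{\rho/2}$ in a time $T_\mathcal{K}$ uniform on $\mathcal{K}$; (ii) each completed inter-excursion interval satisfies $\tau_{k+1} - \eta_k \leq T_\rho$ with the same exponential precision, by the same argument applied from the (bounded) exit point on $\p B_{2\rho}$; and (iii) starting from any $u \in B_\rho$, the exit time of $U^\varepsilon$ from $B_{2\rho}$ exceeds $e^{c_\rho/\varepsilon^2}$ with probability tending to one, for some $c_\rho > 0$ controlled by the quasi-potential $V(\mathbf{0}, \cdot) = 2(S(\cdot) - S(\mathbf{0}))$ on $\p B_{2\rho}$. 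Since $B_{2\rho}$ is bounded and strictly inside $\mathcal{D}_{\mathbf{0}}$, item (iii) follows by truncating $g$ outside a slightly larger ball and invoking the classical Freidlin--Wentzell lower bound for the resulting globally Lipschitz SPDE, after verifying via localization that the truncated and original processes agree up to the exit time with the required precision.

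I would then choose $R_\varepsilon$ with $R_\varepsilon \to \infty$ slowly enough that $\varepsilon^2 \log R_\varepsilon \to 0$, for instance $R_\varepsilon = \varepsilon^{-1}$, so that $R_\varepsilon/\beta_\varepsilon \to 0$ by Theorem \ref{thm2} and $R_\varepsilon e^{-c_\rho/\varepsilon^2} \to 0$. The condition $t \leq \tau_\varepsilon - 3R_\varepsilon$, combined with Theorem \ref{thm3} and Theorem \ref{thm1} (the process first exits $\mathcal{D}_{\mathbf{0}}$ near $\pm z$ and then blows up within a bounded deterministic time), guarantees that with high probability the window $[t,t+R_\varepsilon]$ avoids the final, successful excursion. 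By (iii), Markov's inequality and the strong Markov property, the number of completed unsuccessful excursions inside the window is at most $R_\varepsilon e^{-c_\rho/\varepsilon^2} + 1$, which is zero or one with high probability; each contributes at most $T_\rho$ to $\Lambda_\varepsilon(t)$ by (ii), whence $\Lambda_\varepsilon(t) \leq T_\mathcal{K} + T_\rho = o(R_\varepsilon)$. The supremum over $t$ is handled by discretizing $[0, e^{(\Delta+\delta)/\varepsilon^2}]$ on a grid of mesh $R_\varepsilon/2$, applying the pointwise estimate at each grid point and absorbing the discretization error into the trivial Lipschitz dependence of $\Lambda_\varepsilon$ on $t$; since there are at most $\beta_\varepsilon/R_\varepsilon$ grid points and each contributes exponentially small failure probability, the union bound closes.

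The main obstacle I anticipate is item (iii): one must justify the standard Freidlin--Wentzell lower bound on the exit time from $B_{2\rho}$ uniformly over $u \in B_\rho$ in our setting where solutions may blow up. The truncation argument reduces the problem to the globally Lipschitz setting of \cite{B1,FJL}, but verifying that the truncated and original processes coincide on the event of interest requires the localization framework of Section \ref{secsol} and a pathwise comparison whose uniformity should follow from the construction of the auxiliary domain $G$ carried out in Section \ref{secg}. A secondary subtlety is ensuring that the constants $T_\mathcal{K}, T_\rho, c_\rho$ can be chosen compatibly so that the union bound in the discretization step genuinely closes.
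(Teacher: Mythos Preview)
Your overall strategy---reduce to occupation-time control via an excursion decomposition between $B_\rho$ and $B_{2\rho}$---is the right idea, and it is essentially what underlies Theorem~\ref{stableg}. The paper itself does not prove Theorem~\ref{thm5} from scratch: it first establishes Theorem~\ref{stableg} (stability of time averages up to the exit time $\tau_\varepsilon(\partial G)$ from the auxiliary domain $G$, with the normalization $\gamma_\varepsilon$), then uses Lemma~\ref{taubc} to bring arbitrary $u\in\mathcal{K}$ into $B_c$, Lemma~\ref{nescapelema3} to show $\tau_\varepsilon\le\tau_\varepsilon(\partial G)+\tau^*$ with high probability, and $\beta_\varepsilon/\gamma_\varepsilon\to 1$ to transfer the result. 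So the paper packages the hard work into Theorem~\ref{stableg} and the construction of $G$, whereas you are trying to redo that hard work directly with the small balls $B_\rho\subset B_{2\rho}$ alone.

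The genuine gap is in your discretization/union-bound step, which you flag as a ``secondary subtlety'' but which is in fact the main obstruction. You have roughly $\beta_\varepsilon/R_\varepsilon\sim e^{\Delta/\varepsilon^2}$ grid points. For a fixed window, the probability that the process exits $B_{2\rho}$ during that window is, by the standard iterated large-deviations argument, at most of order $R_\varepsilon\,e^{-c_\rho/\varepsilon^2}$, and the probability that a given return from $\partial B_{2\rho}$ takes longer than $T_\rho$ is at most $e^{-C/\varepsilon^2}$ for some $C>0$ that you cannot force above $\Delta$ (paths may linger near $\pm z$ at cost $\approx\Delta$). Multiplying the number of windows by the per-window failure probability gives something of order $e^{(\Delta-c_\rho)/\varepsilon^2}$ (the $R_\varepsilon$ cancels), and since $c_\rho=V(\mathbf{0},\partial B_{2\rho})$ is \emph{small} for small $\rho$, this diverges. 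No choice of $R_\varepsilon$, $T_\rho$, or $\rho$ rescues the naive union bound.

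What makes the argument go through in \cite{GOV,MOS,B1} (and in the proof of Theorem~\ref{stableg} deferred to \cite{SJS}) is that the excursions are taken relative to the \emph{large} domain $G$, not $B_{2\rho}$: one shows that from any point of $\overline{G}$ the process reaches $B_\rho\cup\partial G$ within a fixed time $T_1$ with probability bounded away from~$0$, and hence that each full excursion from $B_\rho$ back to $B_\rho$ (or out of $G$) has a uniformly bounded expected duration outside $B_{2\rho}$. The number of such excursions before $\tau_\varepsilon(\partial G)$ is then controlled by a geometric random variable with parameter comparable to $e^{-\Delta/\varepsilon^2}$, and one argues on the excursions rather than on a time grid. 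Your proposal would need to be reorganized along these lines; as written, the union bound does not close.
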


Theorem \ref{thm1} is proved in Section \ref{explo}, the remaining results are
proved in \mbox{Sections \ref{secescapedeg} and \ref{secfinal}.}
Perhaps the proof of Theorem \ref{thm1} is where one finds the most differences
with other works in the literature dealing with similar problems, namely
\cite{GOV,MOS}. This is due to the \mbox{fact that} for this part we cannot use large
deviations estimates as these do. The remaining results were established in
\cite{B1,MOS} for the tunneling time in an infinite-dimensional double-well
potential model, i.e. the time the system takes to go from one well to the
bottom of the other one. Our proofs are similar to the ones found in these
references, although we have the additional difficulty of dealing with solutions
which are not globally defined.

\section{Phase diagram of the deterministic system}\label{deter}

In this section we review the behavior of solutions to \eqref{MainPDE} for the
different initial data in $C_D([0,1])$. We begin by characterizing the unstable
equilibria of the system.

\begin{prop}\label{equilibrios} A function $w \in C_D([0,1]) $ is an equilibrium
of the system \mbox{if and only if} there exists $n \in \Z$ such that $w =
z^{(n)}$, where for each $n \in \N$ we define $z^{(n)} \in C_D([0,1])$ by the
formula
$$
z^{(n)}(x)= \left\{ \begin{array}{rl} n^{\frac{2}{p-1}}z( nx - [nx] ) & \text{
if $[nx]$ is even} \\ \\ - n^{\frac{2}{p-1}}z( nx - [nx] )& \text{ if $[nx]$ is
odd}\end{array}\right.
$$ and also define $z^{(-n)}:= - z^{(n)}$ and $z^{(0)}:= \mathbf{0}$.
Furthermore, for each $n \in \Z$ we have
$$
\| z^{(n)} \|_\infty = |n|^{\frac{2}{p-1}}\|z\|_\infty \hspace{2cm}\text{ and
}\hspace{2cm} S(z^{(n)}) = |n|^{2 \left(\frac{p+1}{p-1}\right)} S(z).
$$
\end{prop}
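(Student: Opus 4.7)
The plan is to recognize that an equilibrium $w \in C_D([0,1])$ of \eqref{MainPDE} solves the ODE boundary value problem $w''+g(w)=0$ with $w(0)=w(1)=0$. Multiplying by $w'$ and integrating yields the conserved quantity
$$
E := \tfrac{1}{2}w'(x)^{2} + \tfrac{1}{p+1}|w(x)|^{p+1},
$$
constant along any solution. If $E=0$ then $w\equiv \mathbf{0} = z^{(0)}$; otherwise, since $g$ is locally Lipschitz for $p>1$, standard ODE uniqueness rules out $w(x_0)=w'(x_0)=0$ at any point, so the zero set of $w$ in $[0,1]$ consists of finitely many isolated points $0=x_0<x_1<\cdots<x_n=1$ at each of which $|w'|=\sqrt{2E}>0$.

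On each sub-interval $(x_k,x_{k+1})$ of length $L_k$ the function $w$ has constant sign. The key observation is the scaling invariance of the equation: setting $\tilde w(y):=L_k^{2/(p-1)}w(x_k+L_k y)$ for $y\in[0,1]$ gives, after a short computation using $g(u)=u|u|^{p-1}$, a solution of the same Dirichlet problem on $[0,1]$ of constant sign with conserved energy $L_k^{2(p+1)/(p-1)}E$. I would then invoke existence and uniqueness of a positive solution $z$ on $[0,1]$: existence comes from minimizing $S$ on the Nehari manifold in $H^{1}_{0}((0,1))$ (using the compact embedding $H^{1}_{0}((0,1))\hookrightarrow C_D([0,1])$ to extract a minimizer and the strong maximum principle to make it strictly positive on $(0,1)$), while uniqueness follows from a shooting argument: the scaling shows that the first positive zero of the solution with $w(0)=0,\,w'(0)=v>0$ equals $\ell\,v^{-(p-1)/(p+1)}$ for an explicit $\ell>0$, which is strictly monotone in $v$. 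Hence on each arch $\tilde w=z$ or $\tilde w=-z$; since $E$ is the same on every arch, all lengths $L_k$ must coincide, forcing $L_k=1/n$, and continuity of $w'$ at each interior zero forces the signs to alternate, so $w = z^{(n)}$ or $w=z^{(-n)}$.

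The two identities follow by direct substitution on each sub-interval via the change of variable $y=nx-k$: the factor $n^{2/(p-1)}$ on $w$ together with the extra $n$ from the chain rule gives $(w'(x))^{2}=n^{2(p+1)/(p-1)}z'(y)^{2}$ and $|w(x)|^{p+1}=n^{2(p+1)/(p-1)}z(y)^{p+1}$, both scaled by the \emph{same} exponent, while $dx=dy/n$. Summing over the $n$ arches yields $\|z^{(n)}\|_\infty=|n|^{2/(p-1)}\|z\|_\infty$ and $S(z^{(n)})=|n|^{2(p+1)/(p-1)}S(z)$, with the cases $n<0$ handled by the symmetry $S(-v)=S(v)$ together with $z^{(-n)}=-z^{(n)}$. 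The main obstacle is the uniqueness of the positive equilibrium $z$; once this is in hand, the rest is a transparent application of the scaling and first integral of the ODE.
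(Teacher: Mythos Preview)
Your argument is correct and reaches the same conclusion as the paper, but by a genuinely different and somewhat more direct route. The paper decomposes $[0,1]$ into the open sets $G^\pm=\{w\gtrless 0\}$, rescales on each component interval to recognize $\pm z$, uses boundedness of $w$ to force only finitely many components, and then invokes Hopf's Lemma together with the symmetry $z(x)=z(1-x)$ to match the one-sided derivatives of $w$ at the interior zeros, which is what yields equal lengths and alternating signs. You replace both the finiteness argument and the Hopf step with the single observation that the first integral $E=\tfrac12(w')^2+\tfrac{1}{p+1}|w|^{p+1}$ is conserved: this immediately gives $|w'|=\sqrt{2E}>0$ at every zero (hence finitely many isolated zeros and alternating signs by continuity of $w'$), and since the rescaled energy on an arch of length $L_k$ equals $L_k^{2(p+1)/(p-1)}E$, all arches are forced to have the same length once you know each rescaling is $\pm z$. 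The paper takes uniqueness of the positive equilibrium $z$ as a standing fact; your shooting argument via the scaling $\ell_v=\ell\,v^{-(p-1)/(p+1)}$ supplies an independent proof of it. Your approach trades the PDE tool (Hopf) for elementary ODE phase-plane analysis, which makes the equal-length and alternation conclusions more transparent; the paper's approach has the mild advantage of not needing to discuss regularity of $g$ at $0$ for the uniqueness-of-trajectories step.
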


\begin{proof} It is simple to verify that for each $n \in \Z$ the function
$z^{(n)}$ is an equilibrium of the system and that each $z^{(n)}$ satisfies both
$\| z^{(n)} \|_\infty = |n|^{\frac{2}{p-1}}\|z\|_\infty$ and $S(z^{(n)}) =
|n|^{2 \left(\frac{p+1}{p-1}\right)} S(z)$. Therefore, we must only check that
for any equilibrium of the system $w \in C_D([0,1]) - \{ \mathbf{0}\}$ there
exists $n \in \N$ such that $w$ coincides with either $z^{(n)}$ or $-z^{(n)}$.

Thus, let $w \in C_D([0,1]) - \{ \mathbf{0}\}$ be an equilibrium of \eqref{MainPDE}
and define the sets
$$
G^+ = \{ x \in (0,1) : w(x) > 0 \} \hspace{2cm}\text{ and }\hspace{2cm}G^- = \{x
\in (0,1) : w(x) < 0 \}.
$$ Since $w \neq \mathbf{0}$ at least one of these sets must be nonempty. On the
other hand, if only one of them is nonempty then, since $z$ is the unique
nonnegative equilibrium different from $\mathbf{0}$, we must have either $w=z$
or $w=-z$. Therefore, we may assume that both $G^+$ and $G^-$ are nonempty.
Notice that since $G^+$ and $G^-$ are open sets we may write them as
$$
G^+ = \bigcup_{k \in \N} I^{+}_k \hspace{2cm}\text{ and }\hspace{2cm}G^- =
\bigcup_{k \in \N} I^-_k
$$ where the unions are disjoint and each $I^{\pm}_k$ is a (possibly empty) open
interval.

We first show that each union must be finite. Take $k \in \N$ and suppose we can write
$I^+_k = (a_k, b_k)$ for some $0\leq a_k < b_k \leq 1$. It is easy to check that
$\tilde{w}_k : [0,1] \rightarrow \R$ given by
$$
\tilde{w}_k (x) := (b_k - a_k)^{\frac{2}{p-1}} w( a_k + (b_k-a_k) x)
$$ is a nonnegative equilibrium of the system different from $\mathbf{0}$ and
thus it must be $\tilde{w}_k = z$. This implies that $\| w \|_\infty \geq (b_k -
a_k)^{- \frac{2}{p-1}} \| \tilde{w}_k \|_\infty = (b_k - a_k)^{- \frac{2}{p-1}}
\| z \|_\infty$ from where we see that an infinite number of nonempty $I^+_k$
would contradict the fact that $w$ is bounded.
Thus, we see that $G^+$ is a finite union of open intervals and by symmetry so is $G^-$.
The same argument also implies that for each interval $I^\pm_k=(a_k, b_k)$ the
graph of $w|_{I^\pm_k}$ coincides with that of $\pm z$ but when scaled by the
factor $(b_k - a_k)^{-\frac{2}{p-1}}$. More precisely, for all $x \in [0,1]$ we
have
\begin{equation}\label{wkz2}
 w( a_k + (b_k-a_k) x)= \pm (b_k - a_k)^{-\frac{2}{p-1}}z(x).
\end{equation}

Now, Hopf's Lemma \cite[p.~330]{E} implies that $\partial_x z(0^+) >
0$ and $\partial_x z(1^-) < 0$. Furthermore, since $z$ is symmetric with respect
to $x=\frac{1}{2}$ we have in fact that $\partial_x z(0^+) = -
\partial_x z(1^-) > 0$. In light of \eqref{wkz2} and the fact that $w$ is everywhere
differentiable, the former tells us that plus and minus intervals must present
themselves in alternating order, that their closures cover all of $[0,1]$ and also that their lengths are all the same. Combining this with \eqref{wkz2}
we conclude the proof.
\end{proof}

As a consequence of Proposition \ref{equilibrios} we obtain the following
important corollary.

\begin{cor}\label{corzminimo} The functions $\pm z$ minimize the pontential $S$ and the infinity norm among all the unstable equilibria of \eqref{MainPDE}.
In particular, we have that $\inf_{u \in \mathcal{W}} S(u) = S(\pm z)$.
\end{cor}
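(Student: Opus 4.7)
The plan is to reduce the corollary to Proposition \ref{equilibrios} together with one short positivity computation. By Proposition \ref{equilibrios}, every equilibrium of \eqref{MainPDE} equals $z^{(n)}$ for some $n \in \Z$, and since $z^{(0)} = \mathbf{0}$ is the unique stable equilibrium, the set of unstable equilibria is exactly $\{z^{(n)} : n \in \Z,\, n \neq 0\}$. For such $n$ we have $|n| \geq 1$, and since $2/(p-1) > 0$ the scaling identity
\[
\| z^{(n)} \|_\infty = |n|^{\frac{2}{p-1}} \| z \|_\infty
\]
from the proposition immediately gives $\| z^{(n)} \|_\infty \geq \| z \|_\infty = \|\pm z\|_\infty$, with equality if and only if $|n| = 1$. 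This handles the infinity-norm part.

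For the potential, the analogous scaling $S(z^{(n)}) = |n|^{2(p+1)/(p-1)} S(z)$ will give what we want, but only after checking that $S(z) > 0$ (otherwise the factor $|n|^{2(p+1)/(p-1)} \geq 1$ would push the value of $S$ down, not up). This is the only step that is not a direct quotation of Proposition \ref{equilibrios}, and it is where I would focus. The idea is to multiply the equilibrium equation $z'' + z|z|^{p-1} = 0$ by $z$, integrate on $[0,1]$, and integrate the first term by parts using $z(0) = z(1) = 0$ to obtain the Pohozaev-type identity
\[
\int_0^1 (z')^2 \, dx = \int_0^1 |z|^{p+1} \, dx.
\]
Substituting this back into the definition of $S$ yields
\[
S(z) = \left(\frac{1}{2} - \frac{1}{p+1}\right) \int_0^1 |z|^{p+1} \, dx = \frac{p-1}{2(p+1)} \int_0^1 |z|^{p+1} \, dx,
\]
which is strictly positive since $p > 1$ and $z \not\equiv \mathbf{0}$.

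With $S(z) > 0$ in hand, the scaling identity gives $S(z^{(n)}) = |n|^{2(p+1)/(p-1)} S(z) \geq S(z)$ for all $n \neq 0$, again with equality exactly when $|n| = 1$, i.e.\ when $z^{(n)} \in \{z, -z\}$. Combining both pieces proves that $\pm z$ minimize both $S$ and $\|\cdot\|_\infty$ among the unstable equilibria, and the ``in particular'' statement $\inf_{u \in \mathcal{W}} S(u) = S(\pm z)$ then follows at once. There is no real obstacle here; the essential content of the corollary is contained in Proposition \ref{equilibrios}, and the only genuine computation needed is the one-line integration by parts that establishes positivity of $S(z)$.
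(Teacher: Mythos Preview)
Your argument for the first sentence of the corollary is correct and in fact more explicit than the paper's, which simply declares it ``clear from Proposition \ref{equilibrios}''. You rightly observe that the scaling $S(z^{(n)}) = |n|^{2(p+1)/(p-1)} S(z)$ only pushes $S$ upward once one knows $S(z) > 0$, and your Pohozaev-type integration by parts is the natural way to see this.

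There is, however, a genuine gap in your handling of the ``in particular'' clause. The set $\mathcal{W}$ is \emph{not} the collection of unstable equilibria: by the decomposition \eqref{decomp12} (and the explicit definition in Proposition \ref{A.3}) it is the union $\bigcup_{n \neq 0} \mathcal{W}^{z^{(n)}}$ of all their stable manifolds, i.e.\ the common boundary between $\mathcal{D}_{\mathbf{0}}$ and $\mathcal{D}_e$. So knowing that $S(z^{(n)}) \geq S(z)$ for every $n \neq 0$ does \emph{not} immediately give $\inf_{u \in \mathcal{W}} S(u) = S(z)$; one must rule out the possibility that some $u \in \mathcal{W}$ which is not itself an equilibrium has $S(u) < S(z)$. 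The paper closes this gap via Proposition \ref{Lyapunov}: for $u \in \mathcal{W} \cap H^1_0$ the map $t \mapsto S(U^u(t,\cdot))$ is monotone decreasing, and since $U^u(t,\cdot) \to z^{(n)}$ for some $n \neq 0$, lower semicontinuity of $S$ yields $S(u) \geq S(z^{(n)}) \geq S(z)$. (For $u \notin H^1_0$ one has $S(u) = +\infty$ trivially.) Your claim that this step ``follows at once'' is where the argument is incomplete.
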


\begin{proof} The first statement is clear from Proposition \ref{equilibrios}
while the second one is deduced from the first since the mapping $t \mapsto S(U^u(t,\cdot))$ is monotone decreasing and continuous for any $u \in H_0^1((0,1))$ (see Proposition \ref{Lyapunov}).
\end{proof}
Concerning the asymptotic behavior of solutions to \eqref{MainPDE}, the following
dichotomy \mbox{was proved} by Cortázar and Elgueta in \cite{CE1}.
\begin{prop}\label{descomp1} Let $U^u$ denote the solution to
\eqref{MainPDE} with initial datum $u \in C_D([0,1])$. Then one of these two
possibilities must hold:
\begin{enumerate}
\item [i.] $\tau^{u} < +\infty$ and $U^u$ blows up as $t \nearrow \tau^{u}$,
i.e. $\lim_{t \nearrow \tau^{u}} \|U^u(t,\cdot)\|_\infty = +\infty$
\item [ii.] $\tau^{u} = +\infty$ and $U^u$ converges to some stationary solution
$z^{(n)}$ as $t \rightarrow +\infty$.
\end{enumerate}
\end{prop}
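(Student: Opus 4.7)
The plan is to exploit the gradient-flow structure of \eqref{MainPDE}, with the potential $S$ serving as a strict Lyapunov functional, so that the dichotomy reduces to showing that global existence forces convergence to a stationary solution. I would argue by contrapositive: assume $\tau^u = +\infty$ and derive (ii).

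The first key step would be to establish that the orbit $\{U^u(t,\cdot) : t \geq 0\}$ remains bounded in $C_D([0,1])$. Once this is granted, the variation-of-constants formula
\[
U^u(t,x) = \int_0^1 \Phi(t,x,y)u(y)\,dy + \int_0^t\int_0^1 \Phi(t-s,x,y)\,g(U^u(s,y))\,dy\,ds
\]
together with standard parabolic smoothing estimates on $\Phi$ gives uniform equicontinuity of $\{U^u(t,\cdot) : t \geq 1\}$ on $[0,1]$, so that by Arzel\`a--Ascoli the orbit is precompact in $C_D([0,1])$. From here I would run a LaSalle-type argument: by Proposition~\ref{Lyapunov} the map $t \mapsto S(U^u(t,\cdot))$ is continuous and monotone decreasing, hence constant along $\omega$-limit trajectories; combined with the identity $\frac{d}{dt}S(U^u)=-\|\p_t U^u\|_{L^2}^2$, this forces $\omega(u)$ to consist entirely of equilibria of \eqref{MainPDE}. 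By Proposition~\ref{equilibrios} we conclude that $\omega(u)\subseteq\{z^{(n)}:n\in\Z\}$, and since this set is discrete in $C_D([0,1])$ (the norms $\|z^{(n)}\|_\infty=|n|^{2/(p-1)}\|z\|_\infty$ have no accumulation point) while $\omega(u)$ is nonempty, compact and connected, it must reduce to a single point $z^{(n)}$. The convergence $U^u(t,\cdot)\to z^{(n)}$ then follows from the definition of $\omega$-limit and precompactness of the orbit.

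The hard part is the a priori $L^\infty$ bound on a globally defined orbit. Monotonicity of $S$ along trajectories gives $S(U^u(t,\cdot))\leq S(u)$ for all $t\geq 0$, but this does not by itself bound $\|U^u(t,\cdot)\|_\infty$, since the $L^{p+1}$ term in $S$ lets the potential tend to $-\infty$ along $L^\infty$-unbounded sequences. To close this gap I would combine the energy identity
\[
\int_0^t \|\p_s U^u(s,\cdot)\|_{L^2}^2\,ds \;=\; S(u)-S(U^u(t,\cdot))
\]
with a Matano-type zero-number argument that is available in space dimension one: the number of sign changes of $U^u(t,\cdot)-z^{(n)}$ is nonincreasing in $t$, so orbits cannot oscillate between regimes of arbitrarily large $L^\infty$ norm without either crossing an equilibrium or developing a singularity. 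Coupled with comparison against the family $\{\pm z^{(n)}\}$ (which provides ordered super- and subsolutions separating the phase space), this rules out the scenario $\tau^u=+\infty$ with $\limsup_{t\to\infty}\|U^u(t,\cdot)\|_\infty=+\infty$, and is precisely the technical heart of the dichotomy carried out in \cite{CE1}.
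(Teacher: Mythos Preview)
The paper does not give its own proof of this proposition; it simply quotes the result from Cort\'azar and Elgueta \cite{CE1}. Your outline is therefore more than the paper itself supplies, and the structure you propose is the standard one: obtain precompactness of the orbit via an a priori $L^\infty$ bound plus parabolic smoothing, run a LaSalle argument using $S$ as a strict Lyapunov functional, and then exploit the discreteness of the equilibrium set from Proposition~\ref{equilibrios} to collapse the connected $\omega$-limit to a single $z^{(n)}$. That part of your sketch is correct and clean.

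You are also right that the $L^\infty$ bound on a global orbit is the crux, and your deferral to \cite{CE1} for the intersection-comparison/zero-number machinery is exactly what the paper does for the entire statement. One shortcut worth noting: a quick way to see that $S(U^u(t,\cdot))$ stays bounded below along a global orbit is Proposition~\ref{caract}, which says $S(U^u(t_0,\cdot))<0$ for some $t_0$ forces $u\in\mathcal{D}_e$; contrapositively, $\tau^u=+\infty$ gives $S(U^u(t,\cdot))\ge 0$ for all $t$, and hence the dissipation integral $\int_0^\infty\|\p_t U^u(s,\cdot)\|_{L^2}^2\,ds$ is finite. This still does not by itself yield the $L^\infty$ bound (your caution there is warranted, and the zero-number argument in one space dimension is indeed what closes the gap), but it trims the energy bookkeeping you would otherwise have to do.
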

Proposition \ref{descomp1} allows us to split the space $C_D([0,1])$ of initial
data into three parts
\begin{equation}\label{decomp12}
C_D([0,1]) = \mathcal{D}_{\mathbf{0}} \cup \mathcal{W} \cup \mathcal{D}_e
\end{equation} where $\mathcal{D}_{\mathbf{0}}$ denotes the domain of attraction of
the origin $\mathbf{0}$, $\mathcal{W}$ is the union of all stable manifolds of
the unstable equilibria and $\mathcal{D}_e$ is the domain of explosion of the system, i.e. the
set of all initial data for which the system explodes in finite time. It can be
seen that both $\mathcal{D}_{\mathbf{0}}$ and $\mathcal{D}_e$ are open sets and
that $\mathcal{W}$ is the common boundary separating them. The following
proposition gives a useful characterization of the domain of explosion
$\mathcal{D}_e$.

\begin{prop}[{\cite[Theorem~17.6]{QS}}]\label{caract} The domain of explosion $\mathcal{D}_e$ satisfies
$$
\mathcal{D}_e = \{ u \in C_D([0,1]) : S( U^u (t, \cdot) ) < 0 \text{ for some }0
\leq t < \tau^u \}.
$$ Furthermore, we have $\lim_{t \nearrow \tau^u} S( U^u(t,\cdot) ) =
-\infty$.
\end{prop}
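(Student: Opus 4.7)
The plan is to combine two facts already at hand: the strict Lyapunov property of $S$ along \eqref{MainPDE} from the appendix (Proposition \ref{Lyapunov}), i.e.
$$
\frac{d}{dt} S(U^u(t,\cdot)) = -\|\p_t U^u(t,\cdot)\|_2^2 \le 0,
$$
and the deterministic dichotomy of Proposition \ref{descomp1}. The workhorse computation will be the energy identity for $y(t) := \|U^u(t,\cdot)\|_2^2$, obtained by testing \eqref{MainPDE} against $U^u$ itself:
$$
y'(t) \;=\; -2\|\p_x U^u(t,\cdot)\|_2^2 + 2\|U^u(t,\cdot)\|_{p+1}^{p+1} \;=\; -4 S(U^u(t,\cdot)) + \frac{2(p-1)}{p+1} \|U^u(t,\cdot)\|_{p+1}^{p+1}.
$$

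For the inclusion $\{S(U^u(t,\cdot))<0 \text{ for some } t<\tau^u\} \subseteq \mathcal{D}_e$ I would use Levine's concavity method. If $S(U^u(t_0,\cdot))<0$, monotonicity keeps $S(U^u(t,\cdot))<0$ for all $t \ge t_0$, and H\"older's inequality on the unit interval gives $\|U^u(t,\cdot)\|_{p+1}^{p+1} \ge y(t)^{(p+1)/2}$. Substituting both into the identity above produces the Bernoulli-type ODI
$$
y'(t) \;\ge\; \frac{2(p-1)}{p+1}\, y(t)^{(p+1)/2},
$$
which forces $y(t) \to +\infty$ in finite time. Since every equilibrium $z^{(n)}$ has finite $L^2$ norm, the dichotomy of Proposition \ref{descomp1} rules out convergence to an equilibrium, so $\tau^u < +\infty$ and $u \in \mathcal{D}_e$.

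For the reverse inclusion I would argue by contradiction: assume $\tau^u < +\infty$ but $S(U^u(t,\cdot)) \ge 0$ throughout $[0,\tau^u)$. Monotonicity and lower boundedness of $S$ then yield $\int_0^{\tau^u} \|\p_t U^u(s,\cdot)\|_2^2\,ds \le S(u)$, and Cauchy--Schwarz applied to $y' = 2 \int U^u \p_t U^u$ shows that $(\sqrt{y})' \le \|\p_t U^u\|_2$, so $y$ stays bounded on $[0,\tau^u)$. Combining this $L^2$ control with the constraint $S \ge 0$ (which reads $\|\p_x U^u\|_2^2 \ge \tfrac{2}{p+1}\|U^u\|_{p+1}^{p+1}$) and the one-dimensional Sobolev inequality $\|v\|_\infty^2 \le \tfrac{1}{2}\|v'\|_2^2$ for $v \in H^1_0(0,1)$, one should extract an a priori $L^\infty$ bound on $U^u$ up to $\tau^u$, contradicting blow-up. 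This delicate potential-well / Nehari-functional estimate, executed in detail in \cite[Theorem~17.6]{QS}, is where I expect the main obstacle to lie.

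Finally, for the ``furthermore'' claim I would sharpen the Levine computation. Cauchy--Schwarz applied to $y' = 2 \int U^u \p_t U^u$ gives $(y'(t))^2 \le 4 y(t)\bigl(-S'(U^u(t,\cdot))\bigr)$, and combined with the lower bound $y'(t) \ge c\, y(t)^{(p+1)/2}$ obtained above this yields $-S'(U^u(t,\cdot)) \ge c'\, y(t)^p$. Since the ODI drives $y(t) \to +\infty$ at an algebraic rate as $t \nearrow \tau^u$, integration over $[t_0,t)$ forces $S(U^u(t,\cdot)) \to -\infty$ as claimed.
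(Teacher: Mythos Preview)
The paper does not prove Proposition~\ref{caract}; it is quoted verbatim from \cite[Theorem~17.6]{QS}, so there is no ``paper's proof'' to compare against beyond that citation. Your sketch is in the spirit of the argument in \cite{QS}, and the forward inclusion via Levine's concavity method is correct as written: once $S(U^u(t_0,\cdot))<0$, the differential inequality $y'\ge \tfrac{2(p-1)}{p+1}\,y^{(p+1)/2}$ forces finite-time blow-up of $y$ and hence of $U^u$.

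There are, however, two genuine gaps you should be aware of. For the reverse inclusion you correctly obtain that if $S\ge 0$ throughout then $y=\|U^u(t,\cdot)\|_2^2$ stays bounded, but the step ``$L^2$ bound $+$ $S\ge 0$ $+$ $\|v\|_\infty\le C\|\p_x v\|_2$ $\Rightarrow$ $L^\infty$ bound'' does not close as stated: nothing you wrote bounds $\|\p_x U^u(t,\cdot)\|_2$. If you try to close it via Gagliardo--Nirenberg as in Lemma~\ref{compacidad}, you will find the restriction $p<5$ reappearing, whereas Proposition~\ref{caract} is used in the paper for all $p>1$ (e.g.\ in Corollary~\ref{contdetexp} and Proposition~\ref{convsup}, which carry no such restriction). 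The proof in \cite{QS} avoids this by a different route.

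For the ``furthermore'' claim, your chain $-S'\ge (y')^2/(4y)\ge c' y^p$ is correct, but concluding $S\to-\infty$ requires $y(t)\to+\infty$ as $t\nearrow\tau^u$. The ODI $y'\ge c\,y^{(p+1)/2}$ only says $y$ blows up at \emph{some} finite time $T^*$; it does not by itself force $T^*=\tau^u$, and $L^\infty$ blow-up of $U^u$ does not automatically imply $L^2$ blow-up. This equivalence of blow-up norms is part of what \cite{QS} establishes and is precisely the ``delicate'' point you flagged.
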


From these results one can obtain a precise description of the
domains $\mathcal{D}_{\mathbf{0}}$ and $\mathcal{D}_e$ in the region of
nonnegative data. Cortázar and Elgueta proved the following result in \cite{CE2}.

\begin{prop}\label{descomp2} $\,$
\begin{enumerate}
\item [i.] Assume $u \in C_D([0,1])$ is nonnegative and such that $U^u$ is
globally defined and converges to $z$ as $t \rightarrow +\infty$. Then for $v
\in C_D([0,1])$ we have that:
\begin{enumerate}
\item [$\bullet$] $\mathbf{0} \lneq v \lneq u \Longrightarrow U^v$ is globally
defined and converges to $\mathbf{0}$ as $t \rightarrow +\infty$.
\item [$\bullet$] $u \lneq v \Longrightarrow U^v$ explodes in finite time.
\end{enumerate}
\item [ii.] For every nonnegative $u \in C_D([0,1])$ there exists $\lambda_c^u >
0$ such that for every $\lambda > 0$:
\begin{enumerate}
\item [$\bullet$] $0 < \lambda < \lambda_c^u \Longrightarrow U^{\lambda u}$ is
globally defined and converges to $\mathbf{0}$ as $t \rightarrow +\infty$.
\item [$\bullet$] $\lambda = \lambda_c^u \Longrightarrow U^{\lambda u}$ is
globally defined and converges to $z$ as $t \rightarrow +\infty$.
\item [$\bullet$] $\lambda > \lambda_c^u \Longrightarrow U^{\lambda u}$ explodes
in finite time.
\end{enumerate}
\end{enumerate}
\end{prop}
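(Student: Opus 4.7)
The plan is to derive both statements from two ingredients: the parabolic comparison principle for \eqref{MainPDE} (which yields $U^{v_1}(t,\cdot)\leq U^{v_2}(t,\cdot)$ whenever $v_1\leq v_2$, strictly on $(0,1)$ for $t>0$ when $v_1\lneq v_2$, by the strong maximum principle), together with the trichotomy \eqref{decomp12} and Proposition \ref{descomp1}. For part (i), comparison with $\mathbf{0}$ and with $u$ traps $U^v$ between them, so $U^v$ is nonnegative, bounded and globally defined; Proposition \ref{descomp1} then forces $U^v$ to converge to some equilibrium, and by Proposition \ref{equilibrios} the only nonnegative equilibria are $\mathbf{0}$ and $z$. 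The task therefore reduces to excluding convergence to $z$ in the case $v\lneq u$, and to excluding global existence in the case $u\lneq v$.

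To rule out $U^v\to z$ when $v\lneq u$, I would linearize around $z$. Let $\mu_1$ denote the principal eigenvalue of $L:=-\p_{xx}-pz^{p-1}$ on $H^1_0((0,1))$ with positive eigenfunction $\phi_1$; since $z$ is an unstable (saddle) equilibrium, $\mu_1<0$. The difference $w:=U^u-U^v$ is nonnegative and nontrivial by the strong maximum principle, and satisfies the linear equation
\[
\p_t w=\p_{xx}w+c(t,x)w,\qquad c(t,x)=\int_0^1 g'\bigl(\theta U^u+(1-\theta)U^v\bigr)\,d\theta.
\]
Assuming $U^v\to z$ forces uniform convergence $c(t,\cdot)\to g'(z)=pz^{p-1}$; testing against $\phi_1$ and using $L\phi_1=\mu_1\phi_1$ gives
\[
\frac{d}{dt}\int_0^1 w\phi_1\,dx=-\mu_1\int_0^1 w\phi_1\,dx+\int_0^1\bigl(c(t,\cdot)-pz^{p-1}\bigr)w\phi_1\,dx,
\]
which forces $\int_0^1 w\phi_1\,dx$ to grow at least like $e^{|\mu_1|t/2}$ for large $t$, contradicting the uniform boundedness of $w$. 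Symmetrically, if $u\lneq v$ and $U^v$ were globally defined, the bound $U^v\geq U^u\to z$, combined with Proposition \ref{descomp1} and nonnegativity (only $\mathbf{0}$ and $z$ are admissible limits, and $\mathbf{0}\not\geq z$), would force $U^v\to z$ as well, and the same spectral argument applied to $U^v-U^u$ would again contradict boundedness; hence $U^v$ must blow up in finite time.

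For part (ii), set $\lambda_c^u:=\sup\{\lambda>0:U^{\lambda u}\text{ is globally defined}\}$. Openness of $\mathcal D_{\mathbf{0}}$ around $\mathbf{0}$ gives $\lambda_c^u>0$, while
\[
S(\lambda u)=\frac{\lambda^2}{2}\int_0^1 (u')^2-\frac{\lambda^{p+1}}{p+1}\int_0^1 |u|^{p+1}\longrightarrow -\infty\quad\text{as }\lambda\to\infty
\]
(using $p>1$) together with Proposition \ref{caract} gives $\lambda_c^u<\infty$. Monotonicity of $\lambda\mapsto U^{\lambda u}$ via comparison makes the set of $\lambda$ with global existence an interval; openness of $\mathcal D_e$ forces $U^{\lambda_c^u u}$ to be globally defined, and openness of $\mathcal D_{\mathbf{0}}$ forces it not to converge to $\mathbf{0}$, so by Proposition \ref{descomp1} and nonnegativity it must converge to $z$. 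Finally, for $0<\lambda<\lambda_c^u$ one applies part (i) with $u$ replaced by $\lambda_c^u u$ and $v=\lambda u$ to conclude $U^{\lambda u}\to\mathbf{0}$. The main technical obstacle throughout is the spectral argument of the second paragraph: it rests on the negativity of $\mu_1$ (justified by the saddle-point nature of $z$, i.e.\ Morse index at least one) and on sufficient control of the long-time behavior of $c(t,\cdot)$, which in turn requires uniform convergence $U^u,U^v\to z$ coming from parabolic regularity estimates.
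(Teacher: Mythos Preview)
The paper does not supply its own proof of this proposition; it simply attributes the result to Cort\'azar and Elgueta \cite{CE2} and moves on. So there is no in-paper argument to compare against, and your sketch has to stand on its own merits.

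Your approach is essentially correct. The comparison principle together with Proposition~\ref{descomp1} traps $U^v$ between $\mathbf{0}$ and $U^u$, and the spectral argument---testing $w=U^u-U^v$ against the principal eigenfunction $\phi_1$ of $L=-\p_{xx}-pz^{p-1}$, whose principal eigenvalue $\mu_1$ is negative because $\langle Lz,z\rangle=(1-p)\int_0^1 z^{p+1}<0$---correctly rules out simultaneous convergence of $U^u$ and $U^v$ to $z$. The symmetric treatment of the case $u\lneq v$ is fine for the same reason, and the technical caveat you flag (uniform convergence to $z$, needed for $c(t,\cdot)\to pz^{p-1}$) is indeed covered by parabolic regularity.

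There is one genuine slip in part~(ii): your argument that $\lambda_c^u<\infty$ via $S(\lambda u)\to -\infty$ tacitly assumes $u\in H^1_0((0,1))$. For a general nonnegative $u\in C_D([0,1])\setminus H^1_0$ the potential satisfies $S(\lambda u)=+\infty$ for every $\lambda$ by definition, so Proposition~\ref{caract} at $t=0$ gives nothing. The cleanest repair is Kaplan's first-eigenfunction method: set $m(t)=\int_0^1 U^{\lambda u}(t,x)\sin(\pi x)\,dx$ and use Jensen's inequality (valid since $g$ is convex on $[0,\infty)$ and $U^{\lambda u}\geq 0$) to obtain $m'\geq -\pi^2 m + C\,m^{p}$, which forces blow-up once $m(0)=\lambda\int_0^1 u\sin(\pi x)\,dx$ is large. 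Alternatively, compare $U^{\lambda u}$ from below with the linear heat flow to get $U^{\lambda u}(t_0,\cdot)\geq \lambda\, e^{t_0\p_{xx}}u\in H^1_0$ for small $t_0>0$, and then run your original argument on that function.
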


This last result yields the existence of an unstable manifold of the saddle
point $z$ which is contained in the region of nonnegative initial data and which we shall
denote by $\mathcal{W}^z_u$. It is $1$-dimensional, has nonempty
intersection with both $\mathcal{D}_{\mathbf{0}}$ and $\mathcal{D}_e$ and joins
$z$ with $\mathbf{0}$. By symmetry, a similar description also holds for the
opposite unstable equilibrium $-z$. Figure \ref{fig1} depicts the decomposition
in \eqref{decomp12} together with the unstable manifolds $\mathcal{W}^{\pm
z}_u$. \mbox{By exploiting the} structure of the remaining unstable equilibria
given by Proposition \ref{equilibrios} one can verify for each of them the
analogue of (ii) in Proposition \ref{descomp2}. Details in \cite{SJS}.

\psfrag{U1}{\vspace{-155pt}$U\equiv 0$}
\psfrag{U2}{\vspace{-45pt}$U \equiv \1$}
\psfrag{De}{\hspace{60pt}\vspace{30pt}$\mathcal{D}_e$}
\psfrag{D0}{$\mathcal{D}_0$}
\psfrag{Wu}{$\mathcal W_1^u$}
\psfrag{Ws}{$\mathcal W_1^s$}
\begin{figure}
	\centering
	\includegraphics[width=8cm]{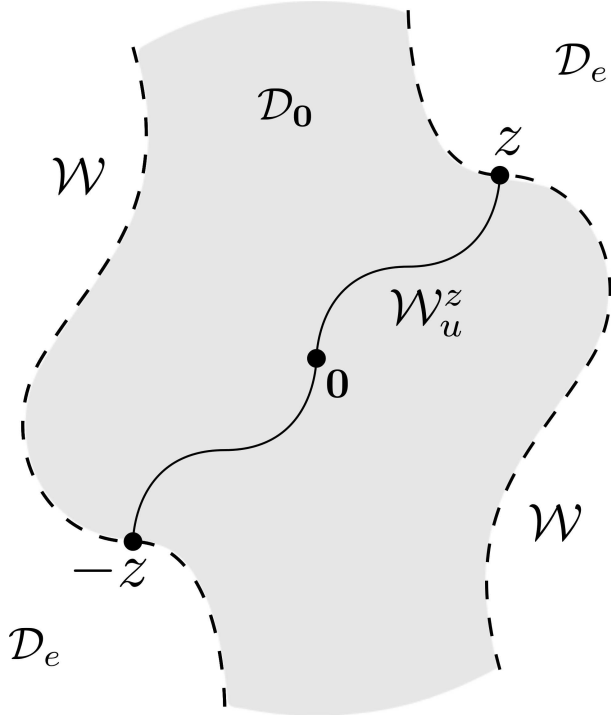}
	\caption{The phase diagram of equation \eqref{MainPDE}.}
	\label{fig1}
\end{figure}

\section{Asymptotic behavior of $\tau_\varepsilon^u$ for $u \in
\mathcal{D}_e$}\label{explo}

In this section we investigate the continuity properties of the explosion time
$\tau_\varepsilon^u$ for initial data in the domain of explosion
$\mathcal{D}_e$. We show that, under suitable conditions on the initial datum $u
\in \mathcal{D}_e$, the random explosion time $\tau_\varepsilon^u$ converges in probability to the deterministic explosion time $\tau^u$ as $\ve \to 0$.
To be more precise, let us consider the sets of initial data in
$\mathcal{D}_e$ which explode only through $+\infty$ or $-\infty$, i.e.
$$
\mathcal{D}_e^+ = \left\{ u \in \mathcal{D}_e : \inf_{(t,x) \in [0,\tau^u)
\times [0,1]}  U^u(t,x) > -\infty \right\}
$$
and
$$
\mathcal{D}_e^- = \left\{ u \in \mathcal{D}_e : \sup_{(t,x) \in [0,\tau^u)
\times [0,1]} U^u(t,x) < +\infty \right\}.
$$ Notice that $\mathcal{D}_e^+$ and $\mathcal{D}_e^-$ are disjoint and also
that they satisfy the relation $\mathcal{D}_e^- = - \mathcal{D}_e^+$.
Furthermore, we shall see below that $\mathcal{D}_e^+$ is an open set. Let us
write $\mathcal{D}_e^*:= \mathcal{D}_e^+ \cup \mathcal{D}_e^-$. The result we
are to prove is the following.

\begin{theorem}\label{contexp} For any bounded set $\mathcal{K} \subseteq
\mathcal{D}_e^*$ at a positive distance from $\p \mathcal{D}_e^*$ and $\delta >
0$ there exists a constant $C > 0$ such that
$$
\sup_{u \in \mathcal{K}} P_u ( |\tau_\varepsilon - \tau| > \delta ) \leq e^{-
\frac{C}{\varepsilon^2}}.
$$
\end{theorem}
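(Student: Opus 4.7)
\emph{Plan.} By the symmetry $U^{-u,\varepsilon} \stackrel{\mathrm{d}}{=} -U^{u,\varepsilon}$ (valid since $g$ is odd and $W \stackrel{\mathrm{d}}{=} -W$), it suffices to treat $\mathcal{K} \subseteq \mathcal{D}_e^+$. I would decompose
$$
\{|\tau_\varepsilon - \tau| > \delta\} = \{\tau_\varepsilon < \tau - \delta\} \cup \{\tau_\varepsilon > \tau + \delta\}
$$
and treat each event separately. The deterministic prerequisites, all uniform on $\mathcal{K}$, are: continuity of $u \mapsto \tau^u$ on $\mathcal{D}_e^+$ (hence $\sup_{u\in\mathcal{K}}\tau^u < \infty$); the bound $M_\eta := \sup_{u\in\mathcal{K}}\sup_{[0,\tau^u-\eta]}\|U^u\|_\infty < \infty$ for each $\eta > 0$; and a uniform pointwise lower bound $m := \inf_{u\in\mathcal{K}}\inf U^u > -\infty$. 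The last of these is where both the $\mathcal{D}_e^+$ hypothesis and the condition $\mathrm{dist}(\mathcal{K}, \partial\mathcal{D}_e^+) > 0$ enter essentially.

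The early-explosion bound follows immediately from the upper bound \eqref{grandes1}: taking $n = M_\delta + 1$ and threshold $1$, the good event of probability $\geq 1 - e^{-C/\varepsilon^2}$ (uniform in $u$) forces $\|U^{u,\varepsilon}(t,\cdot)\|_\infty < n$ throughout $[0, \tau - \delta]$, and hence $\tau_\varepsilon > \tau - \delta$.

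For the late-explosion bound I would first push the trajectory up to a very high level. Pick $M$ very large and let $\sigma_M := \inf\{t : \|U^u(t,\cdot)\|_\infty = M\}$; then $\sigma_M \in [\tau - \eta(M), \tau)$ with $\eta(M) \to 0$ uniformly on $\mathcal{K}$. One more application of \eqref{grandes1} on $[0, \sigma_M]$, with $n = M+1$ and small threshold $\delta'$, yields with probability $\geq 1 - e^{-C_1/\varepsilon^2}$ a state $U^{u,\varepsilon}(\sigma_M,\cdot)$ within $\delta'$ of $U^u(\sigma_M,\cdot)$, hence of sup-norm $\geq M - \delta'$ and bounded below by $m - \delta'$. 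By the strong Markov property, it then suffices to establish a \emph{rapid blow-up} statement: there exist $M_0, \varepsilon_0, C_2 > 0$ and $\rho(L) = O(L^{1-p})$ such that, for every $L \geq M_0$,
$$
\sup_v P_v(\tau_\varepsilon > \rho(L)) \leq e^{-C_2/\varepsilon^2} \quad \text{for all } \varepsilon < \varepsilon_0,
$$
the supremum being over $v \in C_D([0,1])$ with $\|v\|_\infty \geq L$ and $v \geq m - 1$ pointwise. Granted this, choose $M$ so large that $\eta(M) + \rho(M - \delta') < \delta$ to conclude.

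The main obstacle is the rapid-blow-up statement, since large-deviations estimates cannot be applied directly to a trajectory that explodes. My approach would be an iterative doubling argument. Projecting \eqref{MainPDE} onto the first Dirichlet mode $\phi_1(x) = \sin(\pi x)$ and applying Jensen's inequality gives a differential inequality $\dot a \geq c a^p - \pi^2 a$ for the amplitude $a(t) := \int_0^1 U(t,\cdot)\phi_1$, which blows up in time $O(L^{1-p})$ whenever the positive part of the state has sufficient first-mode mass. Combining this with \eqref{grandes1} on the corresponding short time window transfers the estimate to the stochastic trajectory, sending it up to level roughly $2L$ while preserving its pointwise lower bound. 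Iterating along the geometric progression $L_k := 2^k M$ and summing $\sum_k O(L_k^{1-p}) = O(M^{1-p})$ yields the claimed $\rho(M) = O(M^{1-p})$, with cumulative failure probability $\leq K e^{-C/\varepsilon^2} \leq e^{-C_2/\varepsilon^2}$ provided the number of iterations $K$ grows at most logarithmically in $\varepsilon$. The delicate point, both initially and through each iteration, is ensuring that the state carries enough first-mode mass for the Jensen step to bite; this is where $\mathcal{D}_e^+$ plays an essential role, as it allows one to exploit the heat-kernel smoothing over a short interval without having to fear cancellation from large negative excursions.
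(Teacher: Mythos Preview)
Your lower bound (early explosion) is exactly the paper's Proposition~\ref{convergenciainferior0}, and the deterministic uniformity prerequisites you list match what the paper establishes in Corollary~\ref{contdetexp} and Lemmas~\ref{lemacontsup}--\ref{lemacontsup2}.

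For the upper bound the paper takes a different route that sidesteps iteration entirely. Rather than repeatedly applying \eqref{grandes1} at escalating levels, the paper subtracts the stochastic convolution: set $Z^{u,\varepsilon} := U^{u,\varepsilon} - V^{\mathbf{0},\varepsilon}$, where $V^{\mathbf{0},\varepsilon}$ solves the linear stochastic heat equation with zero initial datum. Then $Z^{u,\varepsilon}$ shares the explosion time of $U^{u,\varepsilon}$ and solves a random PDE with \emph{no} white-noise forcing; on the single event $\{\sup_{[0,\delta]}\|V^{\mathbf{0},\varepsilon}\|_\infty < h/(p2^{p-1})\}$, of probability $\geq 1 - e^{-C/\varepsilon^2}$, the process $Z^{u,\varepsilon}$ is pathwise a supersolution to the deterministic equation \eqref{randomPDE2}. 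An $L^2$ energy argument (Lemma~\ref{expestimate}) shows that \eqref{randomPDE2}, started from any $v$ with potential $\underline{S}^{(h)}(v)$ sufficiently negative, explodes through $+\infty$ before time $\delta$; the uniform lower bound $\mathcal{I}_{\mathcal{K}}$ is then used only to check that this explosion is indeed through $+\infty$, so that the supersolution comparison transfers it to $Z^{u,\varepsilon}$. Thus the trigger is the potential reaching level $-M$ rather than the sup-norm reaching $M$, and the rapid-blow-up step is a single pathwise comparison on one high-probability event.

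Your doubling iteration has a gap at the closing step. With $K = K(\varepsilon)$ finite you reach only level $2^K M < \infty$: what you have shown is $\tau_\varepsilon^{(2^K M)} < \rho(M)$, not $\tau_\varepsilon < \rho(M)$. To force actual explosion you need $K = \infty$, and then the cumulative failure probability $\sum_{k \geq 0} e^{-C_k/\varepsilon^2}$ is controlled only if the per-step constants $C_k$ grow with $k$; that requires quantifying how the constant in \eqref{grandes1} depends on the truncation level $n \sim L_k$ and the window $T_k \sim L_k^{1-p}$, which is delicate because the Lipschitz constant of the truncated source scales like $L_k^{p-1}$. Such an analysis may be feasible (the shrinking window works in your favour), but you have not carried it out, and the noise-subtraction device is precisely the tool that avoids it by converting the stochastic explosion into a deterministic one on a single event. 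A secondary issue is that $\|v\|_\infty \geq L$ with $v \geq m-1$ does not by itself force first-mode mass of order $L$, so your Jensen step may not yield blow-up time $O(L^{1-p})$ without further control of the profile shape.
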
 We split the proof of Theorem \ref{contexp} into two parts:
proving first a lower bound and then an upper bound for $\tau_\varepsilon$. The
first one is a consequence of the continuity of solutions to \eqref{MainSPDE}
with respect to $\ve$ on intervals where the deterministic solution
\mbox{remains bounded.} The precise estimate is contained in the following
proposition.

\begin{prop}\label{convergenciainferior0} For any bounded set $\mathcal{K}
\subseteq \mathcal{D}_e$ and $\delta > 0$ there exists a constant $C > 0$ such
that
\begin{equation}\label{convergenciainferior}
\sup_{u \in \mathcal{K}} P_u ( \tau_\varepsilon < \tau - \delta ) \leq e^{-
\frac{C}{\varepsilon^2}}.
\end{equation}
\end{prop}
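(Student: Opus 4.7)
The plan is to reduce the desired estimate to an application of the local upper bound \eqref{grandes1}, by comparing the stochastic solution $U^{u,\varepsilon}$ with the deterministic trajectory $U^u$ on the time interval where the latter remains bounded.

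\emph{Step 1: Uniform control on the deterministic flow.} First I would establish the existence of $T^* = T^*(\mathcal{K}, \delta) > 0$ and $n^* = n^*(\mathcal{K}, \delta) \in \N$ such that for every $u \in \mathcal{K}$ with $\tau^u > \delta$,
$$
\tau^u - \delta \leq T^* \qquad \text{and} \qquad \sup_{t \in [0, \tau^u - \delta]} \|U^u(t,\cdot)\|_\infty \leq n^* - 1.
$$
The first bound would follow from Proposition \ref{caract}: for each $u \in \mathcal{D}_e$ there is a first time $t_u < \tau^u$ at which $S(U^u(t_u,\cdot))$ is strictly negative, and once the potential is sufficiently negative the remaining time to blow-up is quantitatively controlled; continuous dependence of $U^u$ on $u$ together with boundedness of $\mathcal{K}$ should then allow one to pick such a time and potential level uniformly over $u \in \mathcal{K}$. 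The second bound would follow from continuous dependence of the deterministic flow on the initial datum over the (now uniformly controlled) interval $[0, T^*]$.

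\emph{Step 2: Reduction to a separation event.} Fix $u \in \mathcal{K}$ with $\tau^u > \delta$ (otherwise the event in \eqref{convergenciainferior} is empty). On $\{\tau^u_\varepsilon < \tau^u - \delta\}$ the stochastic solution must cross every level before time $\tau^u - \delta$, so $\tau^{(n^*),u}_\varepsilon < \tau^u - \delta$, while by construction $\tau^{(n^*),u} \geq \tau^u - \delta$. Hence
$$
\mathcal{T}^{(n^*),u}_\varepsilon = \tau^{(n^*),u}_\varepsilon \wedge \tau^{(n^*),u} = \tau^{(n^*),u}_\varepsilon \leq T^*,
$$
and at this stopping time $\|U^{u,\varepsilon}(\mathcal{T}^{(n^*),u}_\varepsilon,\cdot)\|_\infty \geq n^*$ whereas $\|U^u(\mathcal{T}^{(n^*),u}_\varepsilon,\cdot)\|_\infty \leq n^* - 1$, so the two processes differ by at least $1$ in supremum distance. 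Consequently
$$
\{\tau^u_\varepsilon < \tau^u - \delta\} \subseteq \Bigl\{ d_{T^* \wedge \mathcal{T}^{(n^*),u}_\varepsilon}(U^{u,\varepsilon}, U^u) \geq 1 \Bigr\}.
$$

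\emph{Step 3: Apply \eqref{grandes1}.} Applying the Freidlin-Wentzell upper bound with $T = T^*$, $n = n^*$ and threshold $1/2$ yields $C > 0$ and $\varepsilon_0 > 0$ such that
$$
\sup_{u \in \mathcal{K}} P_u(\tau_\varepsilon < \tau - \delta) \leq \sup_{u \in C_D([0,1])} P\bigl( d_{T^* \wedge \mathcal{T}^{(n^*),u}_\varepsilon}(U^{u,\varepsilon}, U^u) > 1/2 \bigr) \leq e^{-C/\varepsilon^2}
$$
for all $0 < \varepsilon < \varepsilon_0$; adjusting $C$ if necessary then extends the inequality to all $\varepsilon > 0$.

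The main obstacle is Step 1: proving that both $\tau^u$ and the $L^\infty$-bound on $U^u$ up to time $\tau^u - \delta$ can be made uniform over a merely bounded (and not necessarily precompact) subset $\mathcal{K}$ of the infinite-dimensional domain $\mathcal{D}_e$. This is a purely deterministic PDE question to be resolved via Proposition \ref{caract}, continuous dependence on initial data, and comparison arguments exploiting the structure of $g(u) = u|u|^{p-1}$; once this is in hand, Steps 2 and 3 are an essentially routine application of the local large deviations upper bound.
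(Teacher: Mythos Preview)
Your proposal is correct and follows essentially the same route as the paper: establish uniform-in-$u$ bounds $T^*$ and $n^*$ on the deterministic explosion time and on $\sup_{[0,\tau^u-\delta]}\|U^u(t,\cdot)\|_\infty$, then contain $\{\tau_\varepsilon<\tau-\delta\}$ in a separation event and invoke \eqref{grandes1}. Steps 2 and 3 match the paper's argument verbatim.

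The only point worth flagging is your handling of Step 1. You propose to get the uniform bounds via Proposition \ref{caract} and continuous dependence, but you rightly note that mere boundedness of $\mathcal{K}$ in the infinite-dimensional space $C_D([0,1])$ does not by itself let you pass from pointwise to uniform control. The paper resolves this with a short-time smoothing trick: by Propositions \ref{G.1} and \ref{A.2} there is a small $t_0>0$ such that $\{U^u(t_0,\cdot):u\in\mathcal{K}\}$ lies in a \emph{compact} set, on which the (upper semi)continuous maps $u\mapsto\tau^u$ and $u\mapsto\sup_{[0,\tau^u-\delta]}\|U^u\|_\infty$ are automatically bounded; the contribution of the interval $[0,t_0]$ is handled directly by Proposition \ref{G.1}. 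This is exactly the missing mechanism in your Step 1, and once you insert it your outline becomes a complete proof.
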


\begin{proof} From the continuity of solutions respect to the initial datum (see Proposition \ref{G.2}) we have that $\inf_{u
\in \mathcal{K}} \tau^u > 0$ so that we may assume $\tau^u > \delta$ for all $u \in \mathcal{K}$. For each $u \in \mathcal{D}_e$ we define the quantity
$$
M_u := \sup_{0 \leq t \leq \max\{0, \tau^u - \delta \}}
\|U^{u}(t,\cdot)\|_\infty.
$$ By the continuity of solutions once again we obtain that the application $u \mapsto M_u$ is both upper semicontinuous and finite on
$\mathcal{D}_e$ and hence, by Propositions \ref{G.1} and \ref{A.2},
we conclude that $M:= \sup_{u \in \mathcal{K}} M_u < +\infty$. Similarly, since
$u \mapsto \tau^u$ is both continuous and finite on $\mathcal{D}_e$ (see
Corollary \ref{contdetexp} below for a proof of this) we also obtain that
$\mathcal{T}:= \sup_{u \in \mathcal{K}} \tau^u < +\infty$. Hence, for $u \in
\mathcal{K}$ we get
$$
P_u (\tau^u_\varepsilon < \tau^u - \delta ) \leq P_u \left( d_{\tau^u -
\delta}\left(U^{M_u+1,\varepsilon},U^{M_u+1}\right) > \frac{1}{2}\right) \leq
P_u \left( d_{\mathcal{T} - \delta}\left(U^{M+1,\varepsilon},U^{M+1}\right) >
\frac{1}{2}\right).
$$ By the estimate \eqref{grandes1} we conclude \eqref{convergenciainferior}.
\end{proof}

To establish the upper bound we consider for each $u \in \mathcal{D}_e^+$ the
process
$$
Z^{u,\varepsilon} := U^{u,\varepsilon} - V^{\mathbf{0},\varepsilon}
$$ where $U^{u,\varepsilon}$ is the solution of \eqref{MainSPDE} with initial
datum $u$ and $V^{\mathbf{0},\varepsilon}$ is the solution of \eqref{MainSPDE}
with source term $g \equiv 0$ and initial datum $\mathbf{0}$ constructed using
the same Brownian sheet
as $U^{u,\varepsilon}$. Let us observe that
$Z^{u,\varepsilon}$ satisfies the random partial differential equation
\begin{equation}\label{randomPDE}
\left\{\begin{array}{rll}
\p_t Z^{u,\varepsilon} &= \p^2_{xx}Z^{u,\varepsilon} + g(Z^{u,\varepsilon} +
V^{\mathbf{0},\varepsilon}) & \quad t>0 \,,\, 0<x<1 \\
Z^{u,\varepsilon}(t,0)&=Z^{u,\varepsilon}(t,1)=0 & \quad t>0 \\
Z^{u,\varepsilon}(0,x) &=u(x).
\end{array}\right.
\end{equation}Furthermore, since $V^{\mathbf{0},\varepsilon}$ is globally
defined and remains bounded on finite time intervals, we have that
$Z^{u,\varepsilon}$ and $U^{u,\varepsilon}$ share the same explosion time.
Hence, to  obtain the desired upper bound on $\tau^u_\varepsilon$ we may study
the behavior of $Z^{u,\varepsilon}$. The advantage of this approach is that, in
general, the behavior of $Z^{u,\varepsilon}$ will be easier to understand than
that of $U^{u,\varepsilon}$. Indeed, each realization of $Z^{u,\varepsilon}$ is
the solution of a partial differential equation which one can handle with PDE techniques.

Now, a straightforward calculation using the mean value theorem shows that
whenever $\| V^{\mathbf{0},\varepsilon} \|_\infty < 1$ the process
$Z^{u,\varepsilon}$ satisfies the inequality
\begin{equation}\label{eqcomparacion}
\p_t Z^{u,\varepsilon} \geq \p^2_{xx} Z^{u,\varepsilon} + g(Z^{u,\varepsilon}) -
h|Z^{u,\varepsilon}|^{p-1} - h
\end{equation} where $h := p2^{p-1}\| V^{\mathbf{0},\varepsilon} \|_\infty > 0$.
Therefore, to establish the upper bound on $\tau_\varepsilon^u$ we first consider for
$h > 0$
the solution $\underline{Z}^{(h),u}$ to the equation
\begin{equation}\label{randomPDE2}
\left\{\begin{array}{rll}
\p_t \underline{Z}^{(h),u} &= \p^2_{xx}\underline{Z}^{(h),u} +
g(\underline{Z}^{(h),u}) - h|\underline{Z}^{(h),u}|^{p-1} - h  & \quad t>0 \,,\,
0<x<1 \\
\underline{Z}^{(h),u}(t,0)&=\underline{Z}^{(h),u}(t,1)=0 & \quad t>0 \\
\underline{Z}^{(h),u}(0,x) &=u(x).
\end{array}\right.
\end{equation} and obtain a convenient upper bound for the explosion time of
this new process valid for every $h$ sufficiently small. By showing then that for $h$ suitably small the process $\underline{Z}^{(h),u}$ explodes
through $+\infty$, the fact that $Z^{u,\varepsilon}$ is a supersolution to
\eqref{randomPDE2} will yield the desired upper bound on the explosion time of
$Z^{u,\varepsilon}$, provided that $\| V^{\mathbf{0},\varepsilon} \|_\infty$ remains small
enough. For this last part is where the assumption that $u \in \mathcal{D}_e^+$ is necessary.
Lemma \ref{expestimate} below contains the proper estimate on $\underline{\tau}^{(h),u}$,
the explosion time of $\underline{Z}^{(h),u}$.

\begin{defi} For $h \geq 0$ we define the potential $\underline{S}^{(h)}$
on $C_D([0,1])$ associated to \eqref{randomPDE2} by the formula
$$
\underline{S}^{(h)}(v)=\left\{ \begin{array}{ll} \displaystyle{\int_0^1
\left[\frac{1}{2} \left(\frac{dv}{dx}\right)^2 - \frac{|v|^{p+1}}{p+1} + hg(v) +
hv\right]} & \text{ if $v \in H^1_0((0,1))$ }\\ \\ +\infty & \text{
otherwise.}\end{array}\right.
$$ Notice that $\underline{S}^{(0)}$ coincides with our original potential $S$.
Moreover, it is easy to check that for all $h \geq 0$ the potential
$\underline{S}^{(h)}$ satisfies all properties established for $S$ in the
appendix.
\end{defi}

\begin{lema}\label{expestimate} Given $\delta > 0$ there exists $M > 0$ such
that:
\begin{enumerate}
\item [i.] For every $0 \leq h < 1$, any $u \in C_D([0,1])$ with $\underline{S}^{(h)}(u) \leq - \frac{M}{2}$ verifies $\underline{\tau}^{(h),u} <
\frac{\delta}{2}$.
\item [ii.] Given $K > 0$ there exist constants $\rho_{M,K}, h_{M,K} > 0$
depending only on $M$ and $K$ such that any $u \in C_D([0,1])$ satisfying $S(u)
\leq - M$ and $\| u\|_\infty \leq K$ verifies
$$
\sup_{v \in B_{\rho_{M,K}}(u)} \underline{\tau}^{(h),v} < \delta
$$ for all $0 \leq h < h_{M,K}$.
\end{enumerate}
\end{lema}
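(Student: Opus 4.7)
The plan is to argue (i) by Levine's concavity method applied to the $L^2$-mass of the solution, and then to reduce (ii) to (i) using continuous dependence of the trajectory on the initial datum and on $h$.

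For (i), set $Z := \underline{Z}^{(h),u}$ and $F(t) := \int_0^1 Z(t,x)^2\,dx$. Multiplying \eqref{randomPDE2} by $Z$ and integrating by parts, then expressing $-\int_0^1 Z_x^2$ through the definition of $\underline{S}^{(h)}$, absorbing the perturbative $h$-terms via Young's inequality (using $h<1$), and bounding $F^{(p+1)/2}\le\int_0^1|Z|^{p+1}$ by Jensen on the unit interval, yields a differential inequality
$$F'(t) \;\geq\; -4\,\underline{S}^{(h)}(Z(t)) \;+\; c_p\,F(t)^{(p+1)/2} \;-\; C_p,$$
for constants $c_p,C_p>0$ depending only on $p$. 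Because $\underline{S}^{(h)}$ is a Lyapunov functional for \eqref{randomPDE2}, $\underline{S}^{(h)}(Z(t))\leq\underline{S}^{(h)}(u)\leq -M/2$ for all $t<\underline{\tau}^{(h),u}$, whence $F'(t)\ge 2M-C_p+c_p F(t)^{(p+1)/2}$. Since $(p+1)/2>1$, the implicit upper bound on the blow-up time of $F$, namely $\int_0^\infty dF/\bigl((2M-C_p)+c_pF^{(p+1)/2}\bigr)$, tends to $0$ as $M\to\infty$, so $M$ can be fixed (depending only on $\delta$ and $p$) to force $\underline{\tau}^{(h),u}<\delta/2$.

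For (ii), the bound $\|u\|_\infty\le K$ controls the perturbative part of $\underline{S}^{(h)}$,
$$\underline{S}^{(h)}(u) \;=\; S(u) + h\!\int_0^1 g(u) + h\!\int_0^1 u \;\leq\; -M + h(K^p+K),$$
so picking $h_{M,K}:=M/(2(K^p+K))$ yields $\underline{S}^{(h)}(u)\le -M/2$ for every $h<h_{M,K}$, and part (i) applies directly to $u$. To extend the bound to $v\in B_{\rho_{M,K}}(u)$, I would fix $t_0<\delta/2$ close enough to $\underline{\tau}^{(h),u}$ that $\underline{S}^{(h)}(\underline{Z}^{(h),u}(t_0,\cdot))\le -M-1$, which is possible because $\underline{S}^{(h)}$ decreases strictly along trajectories and tends to $-\infty$ at the explosion time, in analogy with Proposition \ref{caract}. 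Continuous dependence on the initial datum combined with parabolic smoothing at $t=t_0$ then lets one choose $\rho_{M,K}$ so small that $\underline{S}^{(h)}(\underline{Z}^{(h),v}(t_0,\cdot))\le -M/2$ for every $v\in B_{\rho_{M,K}}(u)$, uniformly in $h\in[0,h_{M,K})$. Applying (i) to the solution restarted at $t_0$ then yields $\underline{\tau}^{(h),v}\le t_0+\delta/2<\delta$.

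The step expected to be the main obstacle is the continuous-dependence argument in (ii): because $\underline{S}^{(h)}$ involves a gradient term, $C_D$-closeness of two initial data does not by itself allow comparison of their $\underline{S}^{(h)}$-values. This is resolved by parabolic smoothing — for any fixed $t_0>0$ at which the reference trajectory is bounded, standard Duhamel and heat-semigroup estimates make $v\mapsto \underline{Z}^{(h),v}(t_0,\cdot)$ continuous from $C_D$ into $H^1_0$ on a $C_D$-neighborhood of $u$, with modulus uniform in $h\in[0,1)$.
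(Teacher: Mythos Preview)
Your argument for (i) is essentially the paper's: both use Levine's concavity method on $F(t)=\int Z^2$, combine the energy identity with the Lyapunov decrease of $\underline{S}^{(h)}$, and obtain a superlinear ODE for $F$ whose blow-up time tends to zero as $M\to\infty$. The paper handles the $h$-terms by first showing that $\underline{S}^{(h)}\le -M/2$ forces $\|Z(t)\|_{L^{p+1}}$ to be large, whereas you absorb them via Young's inequality; either device works.

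For (ii) your strategy is the right one (parabolic smoothing plus continuous dependence to transfer the potential bound from $u$ to nearby $v$, then apply (i)), but your choice of $t_0$ creates a uniformity gap. You pick $t_0$ close to $\underline{\tau}^{(h),u}$ so that $\underline{S}^{(h)}(\underline{Z}^{(h),u}(t_0,\cdot))\le -M-1$. Near the explosion time, however, $\|\underline{Z}^{(h),u}(t_0,\cdot)\|_\infty$ is not controlled in terms of $M,K$ alone, so the Lipschitz constant in the continuous-dependence/$H^1$-smoothing estimate at time $t_0$ depends on $u$ (and on $h$), and hence so does the resulting $\rho$. This contradicts the requirement that $\rho_{M,K}$ depend only on $M$ and $K$.

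The paper avoids this by taking $t_u$ \emph{small} rather than close to explosion: choose $t_u$ inside the common local-existence interval for the ball $B_{K+1}$, where the solution and its gradient are bounded uniformly in terms of $K$ (Propositions~\ref{G.1}, \ref{A.7}). The Lyapunov property already gives $\underline{S}^{(h)}(\underline{Z}^{(h),u}(t_u,\cdot))\le \underline{S}^{(h)}(u)$, so there is no need to wait for the potential to drop further; one simply tightens $h_{M,K}$ so that $\underline{S}^{(h)}(u)\le -\tfrac{3M}{4}$, leaving an $M/4$ margin for the continuous-dependence error. With $t_u$ chosen this way the smoothing and Lipschitz constants depend only on $K$, and the claimed uniformity of $\rho_{M,K}$ follows. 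Your last paragraph identifies exactly the mechanism (smoothing into $H^1_0$) that makes this work; you just need to relocate $t_0$.
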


\begin{proof} Let us take $\delta > 0$ and show first that (i) holds for an
appropriate choice of $M$. \mbox{For fixed $M > 0$} and $0 \leq h < 1$, let $u
\in C_D([0,1])$ be such that $\underline{S}^{(h)}(u) \leq - \frac{M}{2}$ and
consider the application $\phi^{(h),u}: [0, \tau^{(h),u}) \rightarrow \R^+$
given by the formula
$$
\phi^{(h),u}(t) = \int_0^1 \left(\underline{Z}^{(h),u}(t,x)\right)^2dx.
$$ It is simple to verify that $\phi^{(h),u}$ is continuous and that for any
$t_0 \in (0,\underline{\tau}^{(h),u})$ it satisfies
\begin{equation}\label{eqpoten1}
\frac{d\phi^{(h),u}}{dt}(t_0) \geq - 4\underline{S}^{(h)}(u^{(h)}_{t_0}) + 2
\int_0^1 \left[ \left(\frac{p-1}{p+1}\right) |u^{(h)}_{t_0}|^{p+1} - h
\left(\frac{p+2}{p}\right)|u^{(h)}_{t_0}|^p - h|u^{(h)}_{t_0}|\right]
\end{equation} where we write $u^{(h)}_{t_0}:=\underline{Z}^{(h),u}(t_0,\cdot)$
for convenience. Hölder's inequality reduces \eqref{eqpoten1} to
\begin{equation}\label{eqpoten2}
\frac{d\phi^{(h),u}}{dt}(t_0) \geq - 4\underline{S}^{(h)}(u^{(h)}_{t_0}) +
2\left[ \left(\frac{p-1}{p+1}\right) \|u^{(h)}_{t_0}\|_{L^{p+1}}^{p+1} -
h\left(\frac{p+2}{p}\right)\|u^{(h)}_{t_0}\|_{L^{p+1}}^{p} -
h\|u^{(h)}_{t_0}\|_{L^{p+1}}\right].
\end{equation} Observe that, by definition of $\underline{S}^{(h)}$ and the fact
that the map $t \mapsto \underline{S}^{(h)}(u^{(h)}_t)$ is decreasing, we obtain
the inequalities
$$
\frac{M}{2} \leq -\underline{S}^{(h)}(u^{(h)}_{t_0}) \leq
\frac{1}{p+1}\|u^{(h)}_{t_0}\|_{L^{p+1}}^{p+1} + h\|u^{(h)}_{t_0}\|_{L^{p+1}}^p
+ h\|u^{(h)}_{t_0}\|_{L^{p+1}}
$$ from which we deduce that by taking $M$ sufficiently large one can force
$\|u^{(h)}_{t_0}\|_{L^{p+1}}$ to be large enough so as to guarantee that
$$
\left(\frac{p-1}{p+1}\right) \|u^{(h)}_{t_0}\|_{L^{p+1}}^{p+1} - h
\left(\frac{p+2}{p}\right)\|u^{(h)}_{t_0}\|_{L^{p+1}}^p -
h\|u^{(h)}_{t_0}\|_{L^{p+1}} \geq \frac{1}{2}\left(\frac{p-1}{p+1}\right)
\|u^{(h)}_{t_0}\|_{L^{p+1}}^{p+1}
$$ is satisfied for any $0 \leq h < 1$. Therefore, we see that if $M$
sufficiently large then for all $0 \leq h < 1$ the application $\phi^{(h),u}$
satisfies
\begin{equation}\label{eqpoten3}
\frac{d\phi^{(h),u}}{dt}(t_0) \geq 2M + \left(\frac{p-1}{p+1}\right)
\left(\phi^{(h),u}(t_0)\right)^{\frac{p+1}{2}}
\end{equation} for every $t_0 \in (0,\tau^{(h),u})$, where to obtain
\eqref{eqpoten3} we have once again used Hölder's inequality and the fact that
the map $t \mapsto \underline{S}^{(h)}(u^{(h)}_t)$ is decreasing. Now, it is straightforward to show that the solution $y$ of the ordinary differential equation
$$
\left\{\begin{array}{l} \dot{y} = 2M + \left(\frac{p-1}{p+1}\right)
y^{\frac{p+1}{2}} \\ y(0) \geq 0 \end{array}\right.
$$ explodes before time $$
T = \frac{\delta}{4} +
\frac{2^{\frac{p+1}{2}}(p+1)}{(p-1)^2(M\delta)^{\frac{p-1}{2}}}.
$$ Indeed, either $y$ explodes before time $\frac{\delta}{4}$ or $\tilde{y}:= y(
\cdot + \frac{\delta}{4})$ satisfies
$$
\left\{\begin{array}{l} \dot{\tilde{y}} \geq \left(\frac{p-1}{p+1}\right)
\tilde{y}^{\frac{p+1}{2}} \\ \tilde{y}(0) \geq \frac{M\delta}{2}
\end{array}\right.
$$ which can be seen to explode before time
$$
\tilde{T}=\frac{2^{\frac{p+1}{2}}(p+1)}{(p-1)^2(M\delta)^{\frac{p-1}{2}}}
$$ by performing the standard integration method. If $M$ is taken sufficiently
large then $T$ can be made strictly smaller than $\frac{\delta}{2}$ which, by
\eqref{eqpoten3}, implies that $\tau^{(h),u} < \frac{\delta}{2}$ as desired.

Now let us show statement (ii). Given $K > 0$ let us take $M > 0$ as above and
consider $u \in C_D([0,1])$ satisfying $S(u) \leq -M$ and $\|u\|_\infty \leq K$.
Using Propositions \ref{S.1} and \ref{Lyapunov} adapted to the system
\eqref{randomPDE2} we may find $\rho_{M,K} > 0$ sufficiently small so as to
guarantee that for some small $0 < t_{u} < \frac{\delta}{2}$ any $v \in
B_{\rho_{M,K}}(u)$ satisfies
$$
\underline{S}^{(h)}(\underline{Z}^{(h),v}(t_{u},\cdot)) \leq
\underline{S}^{(h)}(u) +\frac{M}{4}
$$ for all $0 \leq h < 1$. Notice that this is possible since the constants
in \mbox{Proposition \ref{S.1}} adapted to this context can be taken independent from
$h$ provided that $h$ remains bounded. These constants still depend on $\| u
\|_\infty$ though, so that the choice of $\rho_{M,K}$ will inevitably depend on
both $M$ and $K$. Next, let us take $0 < h_{M,K} < 1$ so as to guarantee that
$\underline{S}^{(h)}(u) \leq - \frac{3M}{4}$ for every $0 \leq h < h_{M,K}$.
Notice that, since $\underline{S}^{(h)}(u) \leq S(u) + h(K^p + K),$ it is
possible to choose $h_{M,K}$ depending only on $M$ and $K$. Thus, for any $v \in
B_{\rho_{M,K}}(u)$ we obtain
$\underline{S}^{(h)}(\underline{Z}^{(h),v}(t_{u},\cdot)) \leq - \frac{M}{2}$
which, by the choice of $M$, implies that $\tau^{(h),v} < t_u + \frac{\delta}{2}
< \delta$. This concludes the proof.
\end{proof}

Let us observe that the system $\overline{Z}^{(0),u}$ coincides with $U^u$ for
every $u \in C_D([0,1])$. Thus, by the previous lemma we obtain the following
corollary.

\begin{cor}\label{contdetexp} The application $u \mapsto \tau^u$ is continuous
on $\mathcal{D}_e$.
\end{cor}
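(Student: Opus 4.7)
The plan is to establish continuity at an arbitrary point $u \in \mathcal{D}_e$ by proving lower and upper semicontinuity of $\tau$ at $u$ separately; since $\mathcal{D}_e$ is open, any $v$ sufficiently close to $u$ automatically lies in $\mathcal{D}_e$, so $\tau^v < +\infty$. Fix $\delta > 0$. The key ingredients are the continuous dependence of solutions of \eqref{MainPDE} on the initial datum over time intervals on which the reference solution remains bounded (Proposition G.2), together with Lemma \ref{expestimate} applied to the unperturbed equation (taking $h=0$, for which $\underline{Z}^{(0),u}=U^u$) and Proposition \ref{caract}.

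For the lower bound, let $t^- := \tau^u - \delta/2$. Since $U^u$ is continuous on the compact set $[0,t^-]\times[0,1]$ (which lies strictly before the blow-up), it is bounded there. Continuous dependence on the initial datum then guarantees that for $v$ in a sufficiently small ball about $u$, the solution $U^v$ exists on $[0,t^-]$ and remains uniformly bounded there; in particular $\tau^v > t^- > \tau^u - \delta$.

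For the upper bound, I invoke Lemma \ref{expestimate} with the parameter $\delta/2$ in place of $\delta$, producing the constant $M>0$. By Proposition \ref{caract} we have $\lim_{t \nearrow \tau^u} S(U^u(t,\cdot)) = -\infty$, so we can pick $t^+ \in (\tau^u - \delta/2,\tau^u)$ with $S(U^u(t^+,\cdot)) \leq -M$. Set $u^\star := U^u(t^+,\cdot)$ and $K := \|u^\star\|_\infty < +\infty$. Part (ii) of Lemma \ref{expestimate}, applied at $h=0$, supplies a radius $\rho_{M,K}>0$ such that every $w \in B_{\rho_{M,K}}(u^\star)$ satisfies $\tau^w < \delta/2$. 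A second application of continuous dependence on the initial datum on the interval $[0,t^+]$ (where $U^u$ is bounded) shows that, for $v$ close enough to $u$, the element $U^v(t^+,\cdot)$ lies in $B_{\rho_{M,K}}(u^\star)$. Writing $\tau^v = t^+ + \tau^{U^v(t^+,\cdot)}$ via the semigroup property of \eqref{MainPDE}, we conclude $\tau^v < t^+ + \delta/2 < \tau^u + \delta$.

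Combining both bounds gives $|\tau^v - \tau^u| < \delta$ for all $v$ in a sufficiently small neighborhood of $u$, proving continuity. The main nuance is coordinating the two appeals to continuous dependence on the initial datum (which only works on intervals where the relevant reference solution is bounded) with the fact that $\tau^u$ is an explosion time: this is handled by splitting at the intermediate time $t^+$, where $U^u$ is still bounded but the potential $S(U^u(t^+,\cdot))$ has already dropped below the threshold $-M$ dictated by Lemma \ref{expestimate}. Everything else is bookkeeping.
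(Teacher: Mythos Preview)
Your proof is correct and follows essentially the same approach as the paper's: both establish the lower bound $\tau^v > \tau^u - \delta$ via continuous dependence on initial data (Proposition~\ref{G.2}), and the upper bound $\tau^v < \tau^u + \delta$ by first using Proposition~\ref{caract} to find a pre-explosion time at which the potential has dropped below the threshold from Lemma~\ref{expestimate}, then invoking that lemma (at $h=0$) together with continuous dependence to transfer the short remaining explosion time to nearby initial data. Your write-up is in fact slightly cleaner than the paper's, since you explicitly invoke part~(ii) of Lemma~\ref{expestimate} with $K=\|u^\star\|_\infty$, whereas the paper somewhat loosely cites part~(i) while actually needing the ball furnished by~(ii).
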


\begin{proof} Given $u \in \mathcal{D}_e$ and $\delta > 0$ we show that there
exists $\rho > 0$ such that for all $v \in B_\rho(u)$ we have
$$
-\delta + \tau^u < \tau^v < \tau^u + \delta.
$$ To see this we first notice that by Proposition \ref{G.2} there exists
$\rho_1 > 0$ such that $-\delta + \tau^u < \tau^v$ for any $v \in
B_{\rho_1}(u)$. Moreover, by (i) in Lemma \ref{expestimate} we may take
$M, \tilde{\rho_2} > 0$ such that $\tau^{\tilde{v}} < \delta$ for any $\tilde{v}
\in B_{\tilde{\rho_2}}(\tilde{u})$ with $\tilde{u} \in C_D([0,1])$ verifying
$S(\tilde{u}) \leq - M$. For any such $M$, by Proposition \ref{caract} we
may find some $0 <t_M < t^u$ such that $S( U^u(t_M,\cdot) ) \leq -M$ and using
Proposition \ref{G.2} we may take $\rho_2 > 0$ such that $U^v(t_M,\cdot) \in
B_{\tilde{\rho_2}}(U^u(t_M,\cdot))$ for any $v \in B_{\rho_2}(u)$. This implies
that $\tau^v < t_M + \delta < t^u + \delta$ for all $v \in B_{\rho_2}(u)$ and
thus by taking $\rho = \min\{ \rho_1,\rho_2\}$ we obtain the result.
\end{proof}

The following two lemmas provide the necessary tools to obtain the uniformity in
the upper bound claimed in Theorem \ref{contexp}.

\begin{lema}\label{lemacontsup} Given $M > 0$ and $u \in \mathcal{D}_e$ let us
define the quantities
$$
\mathcal{T}_M^u = \inf\{ t \in [0,\tau^u) : S( U^u(t,\cdot) ) < - M \}
\hspace{1cm}\text{ and }\hspace{1cm}\mathcal{R}_M^u = \sup_{0 \leq t \leq
\mathcal{T}_M^u} \| U^u(t,\cdot)\|_\infty.
$$ Then the applications $u \mapsto \mathcal{T}_M^u$ and $u \mapsto
\mathcal{R}_M^u$ are both upper semicontinuous on $\mathcal{D}_e$.
\end{lema}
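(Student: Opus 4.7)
The idea is to treat $\mathcal{T}_M^u$ first and then derive the upper semicontinuity of $\mathcal{R}_M^u$ as a consequence. The main ingredients are: continuous dependence of the deterministic flow on initial data (Proposition \ref{G.2}), the monotonicity of $t \mapsto S(U^u(t,\cdot))$ (Proposition \ref{Lyapunov}) together with the fact that $\lim_{t \nearrow \tau^u} S(U^u(t,\cdot)) = -\infty$ for $u \in \mathcal{D}_e$ (Proposition \ref{caract}), and the upper semicontinuity of $S$ along the flow captured by Proposition \ref{S.1}.

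\textbf{Step 1 (upper semicontinuity of $\mathcal{T}_M^u$).} Fix $u \in \mathcal{D}_e$ and $\delta > 0$; since upper semicontinuity need only be checked for small $\delta$, we may assume $\mathcal{T}_M^u + \delta < \tau^u$. By Proposition \ref{caract} and continuity of $t \mapsto S(U^u(t,\cdot))$, pick $t_0 \in (\mathcal{T}_M^u, \mathcal{T}_M^u + \delta/2)$ and $\eta > 0$ with $S(U^u(t_0,\cdot)) \leq -M - 2\eta$. Setting $u' := U^u(t_0,\cdot)$ and applying Proposition \ref{S.1} at $u'$, one obtains $\rho > 0$ and $s \in (0,\delta/2)$ such that
$$ S(U^w(s,\cdot)) \leq S(u') + \eta \leq -M - \eta \qquad \text{for every } w \in B_\rho(u'). $$
By Proposition \ref{G.2}, there exists $\rho_1 > 0$ such that for every $v \in B_{\rho_1}(u)$ the solution $U^v$ is defined on $[0,t_0]$ and $\| U^v(t_0,\cdot) - u' \|_\infty < \rho$. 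The semigroup property of the deterministic flow then yields $S(U^v(t_0+s,\cdot)) \leq -M - \eta < -M$, so $\mathcal{T}_M^v \leq t_0 + s < \mathcal{T}_M^u + \delta$.

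\textbf{Step 2 (upper semicontinuity of $\mathcal{R}_M^u$).} Given $\delta > 0$, continuity of $t \mapsto \|U^u(t,\cdot)\|_\infty$ on $[0,\tau^u)$ lets us choose $\delta' \in (0,\delta)$ with $\sup_{0 \leq t \leq \mathcal{T}_M^u + \delta'} \|U^u(t,\cdot)\|_\infty \leq \mathcal{R}_M^u + \delta/2$. By Step 1 applied to $\delta'$, there is $\rho_2 > 0$ such that $\mathcal{T}_M^v \leq \mathcal{T}_M^u + \delta'$ for every $v \in B_{\rho_2}(u)$. By Proposition \ref{G.2} we may further shrink $\rho_2$ so that $\|U^v(t,\cdot) - U^u(t,\cdot)\|_\infty < \delta/2$ uniformly in $t \in [0, \mathcal{T}_M^u + \delta']$ and $v \in B_{\rho_2}(u)$. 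Combining,
$$ \mathcal{R}_M^v \leq \sup_{0 \leq t \leq \mathcal{T}_M^u + \delta'} \|U^v(t,\cdot)\|_\infty \leq \sup_{0 \leq t \leq \mathcal{T}_M^u + \delta'} \|U^u(t,\cdot)\|_\infty + \frac{\delta}{2} \leq \mathcal{R}_M^u + \delta. $$

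The chief difficulty is that the potential $S$ is not continuous on $C_D([0,1])$ in the sup norm (only lower semicontinuous, owing to the Dirichlet energy term), so sup-norm convergence $U^{u_n}(t_0,\cdot) \to U^u(t_0,\cdot)$ does not by itself transfer to $S$. The upper semicontinuity of $S$ needed in Step 1 is recovered only after a further parabolic-smoothing time $s > 0$, which is exactly what Proposition \ref{S.1} provides.
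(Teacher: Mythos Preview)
Your proof is correct and follows essentially the same route as the paper: the paper's proof simply observes that $\{\mathcal{T}_M < \alpha\}$ is open by Proposition~\ref{S.1} and that $\{\mathcal{R}_M < \alpha\}$ is open by Proposition~\ref{G.2}. Your argument spells out these two steps in detail (and correctly notes that the second step implicitly relies on the first), and your closing remark about needing the extra smoothing time~$s$ to recover upper semicontinuity of~$S$ is exactly the reason Proposition~\ref{S.1} is invoked rather than mere lower semicontinuity of~$S$.
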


\begin{proof} We must see that the sets $\{ \mathcal{T}_M < \alpha\}$ and $\{
\mathcal{R}_M < \alpha \}$ are open in $\mathcal{D}_e$ for all $\alpha > 0$.
But the fact that $\{ \mathcal{T}_M < \alpha\}$ is open follows at once from
Proposition \ref{S.1} and $\{ \mathcal{R}_M < \alpha\}$ is open by Proposition
\ref{G.2}.
\end{proof}

\begin{lema}\label{lemacontsup2} For each $u \in \mathcal{D}_e^+$ let us define
the quantity
$$
\mathcal{I}^u:= \inf_{(t,x) \in [0,\tau^u) \times [0,1]}  U^u(t,x).
$$ Then the application  $u \mapsto I^u$ is lower semicontinuous on
$\mathcal{D}_e^+$.
\end{lema}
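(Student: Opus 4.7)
The strategy is to split each trajectory $U^v$, for $v$ close to $u$, into a long phase on which $U^v$ tracks the deterministic solution $U^u$ and a short terminal phase leading up to explosion, on which any downward excursion is controlled by comparison with a scalar ODE. Fix $u \in \mathcal{D}_e^+$ and $\alpha < c := \mathcal{I}^u$; we seek $\rho > 0$ such that $\mathcal{I}^v > \alpha$ for every $v \in B_\rho(u)$. Since $U^u$ vanishes at $x = 0, 1$ for every $t \in [0, \tau^u)$, we have $c \leq 0$ and in particular $\alpha < 0$. Set $\eta := c - \alpha > 0$ and $m := c - \eta/2 < 0$, and let $W : [0, \tau_W) \to \R$ solve $\dot W = g(W)$ with $W(0) = m$; explicitly $\tau_W = |m|^{1-p}/(p-1) > 0$ and $W$ is continuous with $W(0) = m > \alpha$. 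Pick $\delta \in (0, \tau_W)$ small enough that $W(t) > \alpha$ for every $t \in [0, \delta]$.

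For this $\delta$, Lemma \ref{expestimate}(i) supplies $M > 0$ such that every $w \in C_D([0,1])$ with $S(w) \leq -M/2$ satisfies $\tau^w < \delta$. With this $M$, Lemma \ref{lemacontsup} gives $\mathcal{T}_M^u < \tau^u$ and $\mathcal{R}_M^u < +\infty$. Combining the continuous dependence of the deterministic flow on the initial datum (Proposition \ref{G.2}) with the continuity of $S$ along trajectories used in the proof of Lemma \ref{expestimate}(ii) via Proposition \ref{S.1}, we may then pick $\rho > 0$ so that for every $v \in B_\rho(u)$ we have both $\|U^v - U^u\|_{L^\infty([0, \mathcal{T}_M^u] \times [0, 1])} < \eta/2$, so that $U^v \geq c - \eta/2 = m > \alpha$ on $[0, \mathcal{T}_M^u] \times [0, 1]$, and $S(U^v(\mathcal{T}_M^u, \cdot)) \leq -M/2$, which by Lemma \ref{expestimate}(i) forces $\tau^v < \mathcal{T}_M^u + \delta$.

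For the terminal phase, set $V(t, x) := U^v(\mathcal{T}_M^u + t, x)$ and $Z(t, x) := V(t, x) - W(t)$ on $[0, \tau^v - \mathcal{T}_M^u) \times [0, 1]$. By the preceding, $Z(0, \cdot) \geq 0$ and $Z(t, 0) = Z(t, 1) = -W(t) \geq 0$ (since $W \leq m < 0$), while $Z$ satisfies the linear parabolic equation $\p_t Z - \p_{xx} Z = c(t, x) Z$ with coefficient $c(t, x) := \int_0^1 g'\bigl(W(t) + s(V(t, x) - W(t))\bigr)\,ds$ uniformly bounded on each $[0, T'] \times [0, 1]$ with $T' < \tau^v - \mathcal{T}_M^u$ (because $V$ is bounded there). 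The standard parabolic maximum principle then yields $Z \geq 0$ on $[0, \tau^v - \mathcal{T}_M^u) \times [0, 1]$, so $V(t, x) \geq W(t) > \alpha$ throughout. Together with the lower bound from the long phase, this gives $\mathcal{I}^v > \alpha$, as required.

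The main subtlety is ensuring that the comparison argument remains valid all the way up to the explosion time $\tau^v$, i.e., that $\tau^v - \mathcal{T}_M^u < \tau_W$ uniformly in $v \in B_\rho(u)$. This dictates the order of choices: first fix the comparison ODE (so that $\tau_W$ depends only on $u$ and $\alpha$), then shrink $\delta < \tau_W$ so that $W(t) > \alpha$ on $[0, \delta]$, and only afterwards apply Lemma \ref{expestimate}(i) to select $M$ (and subsequently Propositions \ref{G.2} and \ref{S.1} to select $\rho$).
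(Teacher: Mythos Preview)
Your proof is correct and follows essentially the same strategy as the paper's: split the trajectory into a long phase controlled by continuous dependence on the initial datum (Proposition~\ref{G.2}) and a short terminal phase controlled by comparison with the scalar ODE $\dot y = -|y|^p$, with the order of choices (ODE and $\delta$ first, then $M$ via Lemma~\ref{expestimate}, then $\rho$) matching the paper exactly. The only cosmetic differences are that you invoke Lemma~\ref{expestimate}(i) together with Proposition~\ref{S.1} where the paper packages these into Lemma~\ref{expestimate}(ii), and that you spell out the comparison as a linear parabolic maximum-principle argument where the paper simply declares $U^v$ a supersolution.
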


\begin{proof} Notice that $\mathcal{I}^u \geq 0$ for any $u \in \mathcal{D}_e^+$
since $U^u(t,0)=U^u(t,1)=0$ for all $t \in [0,\tau^u)$. Therefore, it will
suffice to show that the sets $\{ \alpha < \mathcal{I} \}$ are open in
$\mathcal{D}^+_e$ for every $\alpha < 0$. With this purpose in mind, given
$\alpha < 0$ and $u \in \mathcal{D}_e^+$ such that $\alpha < \mathcal{I}^u$,
take $\beta_1,\beta_2 < 0$ such that $\alpha < \beta_1 < \beta_2 <
\mathcal{I}^u$ and let $y$ be the solution to the ordinary differential equation
\begin{equation}\label{contsup2eq}
\left\{\begin{array}{l} \dot{y} = - |y|^p \\ y(0) = \beta_2.\end{array}\right.
\end{equation} Define $t_\beta := \inf \{ t \in [0,t_{max}^y) : y(t) < \beta_1
\}$, where $t_{max}^y$ denotes the explosion time of $y$. Notice that by the
lower semicontinuity of $S$ for any $M > 0$ we have
$S( U^u( \mathcal{T}^u_M, \cdot) ) \leq -M$ and thus, by Lemma
\ref{expestimate}, we may choose $M$ such that
\begin{equation}\label{contsupeq4}
\sup_{v \in B_\rho( U^u( \mathcal{T}^u_M,\cdot) )} \tau^v < t_\beta
\end{equation} for some small $\rho > 0$. Moreover, if $\rho < \mathcal{I}^u -
\beta_2$ then every $v \in B_\rho( U^u( \mathcal{T}^u_M,\cdot) )$ satisfies
$\inf_{x \in [0,1]} v(x) \geq \beta_2$ so that $U^v$ is in fact a supersolution
to the equation \eqref{contsup2eq}. By \eqref{contsupeq4} this implies that $v
\in \mathcal{D}_e^+$ and $\mathcal{I}^v \geq \beta_1 > \alpha$.
On the other hand, by Proposition \ref{G.2} \mbox{we may} take $\delta > 0$
sufficiently small so that for every $w \in B_\delta(u)$ we have
$\mathcal{T}^u_M < \tau^w$ and
$$
\sup_{t \in [0,\mathcal{T}^u_M]} \| U^w(t,\cdot) - U^u(t,\cdot)\|_\infty < \rho.
$$ Combined with the previous argument, this yields the inclusion $B_\delta(u)
\subseteq \mathcal{D}_e^+ \cap \{ \alpha < \mathcal{I} \}$. In particular, this
shows that $\{ \alpha < \mathcal{I} \}$ is open and thus concludes the proof.
\end{proof}

\begin{obs} The preceding proof shows, in particular, that the set
$\mathcal{D}_e^+$ is open.
\end{obs}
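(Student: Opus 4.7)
The plan is to read the openness of $\mathcal{D}_e^+$ directly off the inclusion already produced at the end of the proof of Lemma \ref{lemacontsup2}; essentially no new work is needed. Fix $u \in \mathcal{D}_e^+$ arbitrarily. I aim to exhibit some $\delta > 0$ such that $B_\delta(u) \subseteq \mathcal{D}_e^+$ in $C_D([0,1])$, which is precisely what openness demands.

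The only thing to notice is that the argument of Lemma \ref{lemacontsup2} applies for any choice of $\alpha$ with $\alpha < \mathcal{I}^u$, and that for every $u \in \mathcal{D}_e^+$ one has the free lower bound $\mathcal{I}^u \geq 0$ coming from the boundary conditions $U^u(t,0) = U^u(t,1) = 0$. Hence we may simply fix, say, $\alpha = -1$. Running the proof of Lemma \ref{lemacontsup2} verbatim with this $\alpha$ produces:
\begin{enumerate}
\item[(a)] constants $\beta_1,\beta_2 \in (-1,0)$ and a blow-up time $t_\beta$ for the scalar ODE $\dot y = -|y|^p$, $y(0) = \beta_2$;
\item[(b)] a level $M > 0$ supplied by part (i) of Lemma \ref{expestimate} together with a radius $\rho > 0$, shrunk further to ensure $\rho < \mathcal{I}^u - \beta_2$, so that every $v \in B_\rho(U^u(\mathcal{T}^u_M,\cdot))$ lies in $\mathcal{D}_e^+$ with $\mathcal{I}^v \geq \beta_1$;
\item[(c)] finally, a radius $\delta > 0$ coming from continuous dependence on the initial datum on the compact interval $[0,\mathcal{T}^u_M]$ (Proposition \ref{G.2}) such that $U^w(\mathcal{T}^u_M,\cdot) \in B_\rho(U^u(\mathcal{T}^u_M,\cdot))$ for all $w \in B_\delta(u)$.
\end{enumerate}
Concatenating these steps yields
$$
B_\delta(u) \;\subseteq\; \mathcal{D}_e^+ \cap \{-1 < \mathcal{I}\} \;\subseteq\; \mathcal{D}_e^+,
$$
so $u$ is an interior point of $\mathcal{D}_e^+$. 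Since $u \in \mathcal{D}_e^+$ was arbitrary, $\mathcal{D}_e^+$ is open in $C_D([0,1])$.

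There is no real obstacle, which is why the authors state this only as a remark rather than as a separate lemma: all the analytic input (the ODE comparison, the choice of $M$ via Lemma \ref{expestimate}, and the continuity in the initial datum on bounded time intervals) has already been carried out in the proof of Lemma \ref{lemacontsup2}, and the observation amounts only to noting that the hypothesis $\alpha < \mathcal{I}^u$ is trivially available to every $u \in \mathcal{D}_e^+$ via the sign information $\mathcal{I}^u \geq 0$.
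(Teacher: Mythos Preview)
Your proposal is correct and is precisely the observation the paper is making: the inclusion $B_\delta(u)\subseteq \mathcal{D}_e^+\cap\{\alpha<\mathcal{I}\}$ obtained at the end of the proof of Lemma~\ref{lemacontsup2} already gives that every $u\in\mathcal{D}_e^+$ is an interior point, since some $\alpha<\mathcal{I}^u$ is always available. One small terminological slip: $t_\beta$ is not the blow-up time of the scalar ODE but the first time $y$ drops below $\beta_1$; this does not affect the argument.
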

The conclusion of the proof of Theorem \ref{contexp} is contained in the next
proposition.

\begin{prop}\label{convsup}
For any bounded set $\mathcal{K} \subseteq \mathcal{D}_e^*$ at a positive
distance from $\p \mathcal{D}^*_e$ and $\delta > 0$ there exists a constant $C >
0$ such that
\begin{equation}\label{convergenciasuperior}
\sup_{u \in \mathcal{K}} P_u ( \tau_\varepsilon > \tau + \delta ) \leq e^{-
\frac{C}{\varepsilon^2}}.
\end{equation}
\end{prop}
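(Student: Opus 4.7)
The plan is to decompose $U^{u,\varepsilon} = Z^{u,\varepsilon} + V^{\mathbf{0},\varepsilon}$ and combine the deterministic super/subsolution estimate of Lemma \ref{expestimate} with a Gaussian tail bound on the linear noise process $V^{\mathbf{0},\varepsilon}$ and the local upper bound \eqref{grandes1}. By the symmetry $\mathcal{D}_e^- = -\mathcal{D}_e^+$ and the reflection invariance in law of space-time white noise, one may assume without loss of generality that $\mathcal{K} \subseteq \mathcal{D}_e^+$.

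\emph{Step 1 (Uniform deterministic bounds).} Choose $M > 0$ large enough that the conclusion of Lemma \ref{expestimate}(i) holds with $\delta$ replaced by $\delta/2$. Lemmas \ref{lemacontsup}--\ref{lemacontsup2}, Corollary \ref{contdetexp}, and the appendix Propositions \ref{G.1}--\ref{A.2} (as already exploited in the proof of Proposition \ref{convergenciainferior0}) yield that the quantities
$$
\mathcal{T} := \sup_{u\in\mathcal{K}}\tau^u,\qquad K := \sup_{u\in\mathcal{K}}\mathcal{R}_M^u,\qquad \mathcal{I} := \inf_{u\in\mathcal{K}}\mathcal{I}^u
$$
are all finite, with $\mathcal{I}\ge 0$. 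Feeding $M$ and $K$ into Lemma \ref{expestimate}(ii) (with $\delta/2$ in place of $\delta$) produces constants $\rho:=\rho_{M,K}$ and $h_0:=h_{M,K}$.

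\emph{Step 2 (Good event and supersolution argument).} Set $\eta := \min\{\rho/2,\, h_0/(p\,2^{p-1}),\,1/2\}$ and define
$$
A_\varepsilon := \Big\{ d_{\mathcal{T}}(U^{u,\varepsilon},U^u) < \eta \Big\} \cap \Big\{ \sup_{t\in[0,\mathcal{T}+\delta]}\|V^{\mathbf{0},\varepsilon}(t,\cdot)\|_\infty < \eta \Big\}.
$$
On $A_\varepsilon$, at time $\mathcal{T}_M^u$ the state $v := Z^{u,\varepsilon}(\mathcal{T}_M^u,\cdot)$ lies in $B_\rho(U^u(\mathcal{T}_M^u,\cdot))$, and the supersolution inequality \eqref{eqcomparacion} holds on $[\mathcal{T}_M^u,\tau^u_\varepsilon\wedge(\mathcal{T}+\delta)]$ with $h := p\,2^{p-1}\eta < h_0$. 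The parabolic comparison principle gives $Z^{u,\varepsilon}(\mathcal{T}_M^u + s,\cdot) \geq \underline{Z}^{(h),v}(s,\cdot)$ pointwise, while Lemma \ref{expestimate}(ii) delivers $\underline{\tau}^{(h),v}<\delta/2$. Hence $\tau^u_\varepsilon < \mathcal{T}_M^u + \delta/2 < \tau^u + \delta$ on $A_\varepsilon$.

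\emph{Step 3 (Probability estimate).} The first event in $A_\varepsilon$ is controlled uniformly in $\mathcal{K}$ by \eqref{grandes1}, with $n$ chosen larger than $K+\eta$, giving $P_u(d_{\mathcal{T}}(U^{u,\varepsilon},U^u)\geq\eta) \leq e^{-C_1/\varepsilon^2}$. The second event concerns $V^{\mathbf{0},\varepsilon}$, which equals $\varepsilon V^{\mathbf{0},1}$ in law and is a centered Gaussian field with continuous sample paths; standard Borell--TIS-type concentration gives $P(\sup_{t\in[0,\mathcal{T}+\delta]}\|V^{\mathbf{0},\varepsilon}(t,\cdot)\|_\infty\geq\eta)\leq e^{-C_2/\varepsilon^2}$. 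Taking $C:=\tfrac{1}{2}\min(C_1,C_2)$ yields the desired bound $\sup_{u\in\mathcal{K}} P_u(\tau_\varepsilon > \tau + \delta) \leq e^{-C/\varepsilon^2}$.

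\emph{The main obstacle} is the issue hidden in Step 2: Lemma \ref{expestimate}(ii) only guarantees finite-time explosion of $\underline{Z}^{(h),v}$, not its direction, and the comparison $Z^{u,\varepsilon}\geq \underline{Z}^{(h),v}$ forces the explosion of the left-hand side only when the right-hand side explodes through $+\infty$. The hypothesis $u\in\mathcal{D}_e^+$ is used precisely here: the initial datum $v$ is within $\rho$ of the nonnegative function $U^u(\mathcal{T}_M^u,\cdot)$, so by further shrinking $\rho$ and $h_0$ (and invoking a parabolic maximum-principle comparison with a nonnegative solution to a closely related equation) one ensures that $\underline{Z}^{(h),v}$ remains nonnegative up to its explosion time, thereby forcing the explosion to proceed through $+\infty$.
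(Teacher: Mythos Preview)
Your overall strategy---reduce to $\mathcal{K}\subseteq\mathcal{D}_e^+$, reach time $\mathcal{T}_M^u$ with $U^{u,\varepsilon}$ close to $U^u$, then use \eqref{eqcomparacion} and Lemma~\ref{expestimate} to force quick explosion---is the same as the paper's. The decomposition into a large-deviations event and a Gaussian tail for $V^{\mathbf{0},\varepsilon}$ is also exactly what the paper does (the paper phrases it via the Markov property at time $\mathcal{T}_M^u$, but the content is identical). One technical slip: the event $\{d_{\mathcal{T}}(U^{u,\varepsilon},U^u)<\eta\}$ is not well-defined, since $U^u$ blows up at $\tau^u\le\mathcal{T}$; you should work with $d_{\mathcal{T}_M}$ and truncated processes as in~\eqref{grandes1}.

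The real gap is in your resolution of the ``direction of explosion'' obstacle. You correctly identify the issue, but your fix does not work. First, $u\in\mathcal{D}_e^+$ means only that $U^u$ is \emph{bounded below}, not that it is nonnegative; so $U^u(\mathcal{T}_M^u,\cdot)$ need not be nonnegative, and neither need $v$. Second, even if $v\ge0$, the equation for $\underline{Z}^{(h),v}$ carries the forcing term $-h$, which immediately drives the solution negative near the boundary; there is no choice of $\rho,h_0$ that keeps $\underline{Z}^{(h),v}$ nonnegative. The vague appeal to ``a nonnegative solution to a closely related equation'' is not a proof.

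The paper's resolution uses the uniform lower bound $\mathcal{I}_{\mathcal{K}}:=\inf_{u\in\mathcal{K}}\mathcal{I}^u>-\infty$ (finite by Lemma~\ref{lemacontsup2}). One compares $\underline{Z}^{(h),v}$ from below with the spatially constant ODE solution $y$ of $\dot y=-|y|^p-|y|^{p-1}-1$, $y(0)=\mathcal{I}_{\mathcal{K}}-\tfrac12$, and lets $t_{\mathcal{I}}$ be the time $y$ first drops below $\mathcal{I}_{\mathcal{K}}-1$. The key additional step is to choose $M$ in Lemma~\ref{expestimate} so large that $\underline{\tau}^{(h),v}<\min\{\delta,t_{\mathcal{I}}\}$ for all relevant $v,h$. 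Then $\underline{Z}^{(h),v}$ stays above $\mathcal{I}_{\mathcal{K}}-1$ throughout its lifetime, so it must explode through $+\infty$, and the supersolution comparison $Z^{v,\varepsilon}\ge\underline{Z}^{(h),v}$ now does force explosion of $Z^{v,\varepsilon}$. The point is not nonnegativity but a \emph{finite} lower barrier maintained for a \emph{controlled} time.
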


\begin{proof} Since $\mathcal{D}_e^- = - \mathcal{D}_e^+$ and $U^{-u}= - U^u$
for $u \in C_D([0,1])$, without loss of generality \mbox{we may assume} that
$\mathcal{K}$ is contained in $\mathcal{D}_e^+$.
Let us begin by noticing that for any $M > 0$
$$
\mathcal{T}_M := \sup_{u \in \mathcal{K}} \mathcal{T}_M^u < +\infty
\hspace{1cm}\text{ and }\hspace{1cm}\mathcal{R}_M:= \sup_{u \in \mathcal{K}}
\mathcal{R}^u_M < +\infty.
$$ Indeed, by Propositions \ref{G.1} and \ref{A.2} we may choose $t_0 > 0$ small
so as to guarantee that the orbits $\{ U^{u}(t,\cdot) : 0 \leq t \leq t_0 , u
\in \mathcal{K} \}$ remain uniformly bounded and the family $\{ U^{u}(t_0,\cdot)
: u \in \mathcal{K} \}$ is contained in a compact set $\mathcal{K}' \subseteq
\mathcal{D}_e^+$ at a positive distance \mbox{from $\p \mathcal{D}^+_e$.} But
then we have
$$
\mathcal{T}_M \leq t_0 + \sup_{u \in \mathcal{K}'} \mathcal{T}_M^u
\hspace{1cm}\text{ and }\hspace{1cm}\mathcal{R}_M \leq \sup_{0 \leq t \leq t_0,
u \in \mathcal{K}} \| U^u(t,\cdot) \|_\infty + \sup_{u \in \mathcal{K}'}
\mathcal{R}^u_M $$ and both right hand sides are finite due to Lemma
\ref{lemacontsup} and the fact that $\mathcal{T}_M^u$ and $\mathcal{R}_M$ are
both finite for each $u \in \mathcal{D}_e$ by Proposition \ref{caract}.
Similarly, by Lemma \ref{lemacontsup2} we also have
$$
\mathcal{I}_{\mathcal{K}}:= \inf_{u \in \mathcal{K}} \mathcal{I}^u > - \infty.
$$
Now, for each $u \in \mathcal{K}$ and $\varepsilon > 0$ by the Markov property
we have for any $\rho > 0$
\begin{equation}\label{descompexp}
P_u ( \tau_\varepsilon > \tau + \delta ) \leq P( d_{\mathcal{T}_M}(
U^{(\mathcal{R}_M+1),u,\varepsilon}, U^{(\mathcal{R}_M+1),u}) > \rho ) + \sup_{v
\in B_{\rho}(U^u( \mathcal{T}^u_M, \cdot))} P_v ( \tau_\varepsilon > \delta).
\end{equation} The first term on the right hand side is taken care of by
\eqref{grandes1} so that in order to show \eqref{convergenciasuperior} it only
remains to deal with the second term by choosing $M$ and $\rho$ appropriately.
The argument given to deal with this term is similar to that of the proof of
\mbox{Lemma \ref{lemacontsup2}.} Let $y$ be the solution to the ordinary
differential equation
\begin{equation}\label{convsupeq2}
\left\{\begin{array}{l} \dot{y} = - |y|^p - |y|^{p-1} - 1\\ y(0) =
\mathcal{I}_{\mathcal{K}} - \frac{1}{2}.\end{array}\right.
\end{equation} Define $t_{\mathcal{I}} := \inf \{ t \in [0,t_{max}^y) : y(t) <
\mathcal{I}_{\mathcal{K}} - 1 \}$, where $t_{max}^y$ denotes \mbox{the explosion
time of $y$.}
By Lemma \ref{expestimate}, we may choose $M$ such that
\begin{equation}\label{convsupeq4}
\sup_{v \in B_{\rho_M}( U^u( \mathcal{T}^u_M,\cdot) )} \tau^{(h),v} <
\min\{\delta, t_\mathcal{I}\}
\end{equation} for all $0 \leq h < h_M$, where $\rho_M > 0$ and $h_M > 0$ are
suitable constants. The key observation here is that, since $\mathcal{R}_M <
+\infty$, we may choose these constants so as not to depend on $u$ but rather on
$M$ and $\mathcal{R}_M$ themselves. Moreover, if $\rho_M < \frac{1}{2}$ then
every $v \in B_{\rho_M}( U^u( \mathcal{T}^u_M,\cdot) )$ satisfies $\inf_{x \in
[0,1]} v(x) \geq \mathcal{I}_{\mathcal{K}} - \frac{1}{2}$ so that
$\underline{Z}^{(h),v}$ is in fact a supersolution to the equation
\eqref{convsupeq2} for all $0 \leq h < \min\{h_M,1\}$. By \eqref{convsupeq4} the
former implies that $\underline{Z}^{(h),v}$ explodes through $+\infty$ and that
it remains bounded from below by $\mathcal{I}_{\mathcal{K}} - 1$ until its
explosion time which, by \eqref{convsupeq4}, is smaller than $\delta$. In
particular, we see that if $\|V^{\mathbf{0},\varepsilon}\|_\infty < \min \{ 1,
\frac{h_M}{p2^{p-1}}\}$ then $Z^{v,\varepsilon}$ explodes before
$\underline{Z}^{(h),v}$ does, so that we
have that $\tau_\varepsilon < \delta$ under such conditions. Hence, we conclude
that
$$
\sup_{v \in B_{\rho_M}(U^u( \mathcal{T}^u_M, \cdot))} P_v ( \tau_\varepsilon >
\delta) \leq P \left( \sup_{t \in [0,\delta]}
\|V^{\mathbf{0},\varepsilon}(t,\cdot) \|_\infty \leq \min \left\{ 1,
\frac{h_M}{p2^{p-1}} \right\} \right)
$$ which, by recalling the estimate \eqref{grandes1}, gives the desired control
on the second term in the right hand side of \eqref{descompexp}. Thus, by taking
$\rho$ equal to $\rho_M$ in \eqref{descompexp}, we obtain the result.

\end{proof}

This last proposition in fact shows that for $\delta > 0$ and a given bounded
set $\mathcal{K} \subseteq \mathcal{D}_e^*$ at a positive distance from $\p
\mathcal{D}^*_e$ there exist constants $M,C > 0$ such that
$$
\sup_{u \in \mathcal{K}} P_u ( \tau_\varepsilon > \mathcal{T}_M^u + \delta )
\leq e^{- \frac{C}{\varepsilon^2}}.
$$ By exploiting the fact $\mathcal{T}_M < +\infty$ for every $M > 0$ we obtain
the following useful corollary.

\begin{cor}\label{exploacot} For any bounded $\mathcal{K} \subseteq
\mathcal{D}_e^*$ at a positive distance from $\p \mathcal{D}_e^*$ there exist
constants $\tau_K, C > 0$ such that
$$
\sup_{u \in \mathcal{K}} P_u ( \tau_\varepsilon > \tau_K) \leq e^{-
\frac{C}{\varepsilon^2}}.
$$
\end{cor}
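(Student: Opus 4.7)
The plan is to read off the corollary directly from the refined statement displayed immediately after the proof of Proposition \ref{convsup}, namely that there exist constants $M,C>0$ (depending on $\mathcal{K}$ and on our chosen $\delta$, which we may fix to be any convenient value, e.g.\ $\delta=1$) such that
\[
\sup_{u \in \mathcal{K}} P_u ( \tau_\varepsilon > \mathcal{T}_M^u + \delta ) \leq e^{- \frac{C}{\varepsilon^2}}.
\]
The only thing we still need is that the deterministic quantities $\mathcal{T}_M^u$ are bounded uniformly in $u \in \mathcal{K}$, i.e.\ that
\[
\mathcal{T}_M := \sup_{u \in \mathcal{K}} \mathcal{T}_M^u < +\infty.
\]
This is precisely the first statement established in the proof of Proposition \ref{convsup}: by Propositions \ref{G.1} and \ref{A.2} one finds $t_0>0$ such that $\{U^u(t_0,\cdot):u\in\mathcal{K}\}$ lies in a compact $\mathcal{K}'\subseteq\mathcal{D}_e^+$ at positive distance from $\p\mathcal{D}_e^+$, and then the upper semicontinuity of $\mathcal{T}_M^{\,\cdot}$ on $\mathcal{D}_e$ (Lemma \ref{lemacontsup}) together with its finiteness on $\mathcal{D}_e$ (Proposition \ref{caract}) yields $\sup_{v\in\mathcal{K}'}\mathcal{T}_M^v<+\infty$, hence $\mathcal{T}_M<+\infty$.

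Setting $\tau_K := \mathcal{T}_M + \delta$, we then have $\mathcal{T}_M^u + \delta \le \tau_K$ for every $u \in \mathcal{K}$, so that the inclusion
\[
\{\tau_\varepsilon > \tau_K\} \subseteq \{\tau_\varepsilon > \mathcal{T}_M^u + \delta\}
\]
holds $P_u$-a.s., and the displayed bound from Proposition \ref{convsup} immediately gives
\[
\sup_{u \in \mathcal{K}} P_u(\tau_\varepsilon > \tau_K) \leq \sup_{u \in \mathcal{K}} P_u(\tau_\varepsilon > \mathcal{T}_M^u + \delta) \leq e^{-\frac{C}{\varepsilon^2}},
\]
which is the desired estimate.

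There is essentially no obstacle here: the whole content of the corollary is the extraction of a single deterministic constant $\tau_K$ that works uniformly on $\mathcal{K}$, and all the nontrivial work (the supersolution comparison with the ODE $\dot{y}=-|y|^p-|y|^{p-1}-1$, the Freidlin--Wentzell upper bound \eqref{grandes1}, and the uniform finiteness of $\mathcal{T}_M^u$ and $\mathcal{R}_M^u$ via the upper semicontinuity lemmas) has already been carried out in Proposition \ref{convsup}. The only point that requires a moment of care is verifying that the constant $M$ supplied by Proposition \ref{convsup} does not itself depend on $u\in\mathcal{K}$; but inspecting that proof shows $M$ was chosen to depend only on $\mathcal{R}_M$ and $\mathcal{I}_\mathcal{K}$, both of which are finite uniformly on $\mathcal{K}$, so $\mathcal{T}_M$ is a single finite number and $\tau_K$ is well defined.
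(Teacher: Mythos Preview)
Your proof is correct and follows exactly the approach the paper takes: the paper simply remarks that Proposition~\ref{convsup} actually yields the sharper bound $\sup_{u\in\mathcal{K}}P_u(\tau_\varepsilon>\mathcal{T}_M^u+\delta)\le e^{-C/\varepsilon^2}$ and then invokes the fact that $\mathcal{T}_M<+\infty$ (already established in that proof) to extract the uniform constant $\tau_K$. Your write-up spells out the inclusion $\{\tau_\varepsilon>\tau_K\}\subseteq\{\tau_\varepsilon>\mathcal{T}_M^u+\delta\}$ and the independence of $M$ from $u$ more explicitly than the paper does, but the argument is the same.
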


\section{Construction of an auxiliary domain}\label{secg}

To study the behavior of the explosion time for initial data in
$\mathcal{D}_\mathbf{0}$ it is convenient to introduce an auxiliary bounded
domain $G \subseteq C_D([0,1])$ containing a neighborhood $B_c$ of the stable
equilibrium and such that for any initial data $u \in B_c$ the escape time
from this domain is asymptotically equivalent to the explosion time. By doing so
we can then reduce our original problem to a simpler one: characterizing the
escape from this domain. This becomes a simpler problem because, since the
escape only depends on the behavior of the system while it remains inside a
bounded region, local large deviation estimates can be successfully applied to
its study. This approach is not new, it was originally proposed in \cite{GOV} to
study the finite-dimensional double-well potential model. However, in our
present setting the construction of this auxiliary domain is much more involved
and, as a matter of fact, a priori it is not even clear that such a domain
exists for every value
of $p > 1$. The aim of this section is to construct such a domain for $1 < p < 5$. The following lemma will
play a key role in this.

\begin{lema}[\cite{S}]\label{compacidad} If $1 < p < 5$ then the set $\{
u \in \overline{\mathcal{D}_{\mathbf{0}}} : S(u) \leq a \}$ is bounded in $C([0,1])$ for any $a > 0$.
\end{lema}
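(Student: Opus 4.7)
The plan is to reduce the statement to an $L^2$ lower bound and then invoke a Payne--Sattinger-type blow-up argument to reach a contradiction with global existence. The overall scheme involves three steps.

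First I would show that $S\ge 0$ throughout $\overline{\mathcal{D}_{\mathbf{0}}}$. Since $\overline{\mathcal{D}_{\mathbf{0}}}=\mathcal{D}_{\mathbf{0}}\cup\mathcal{W}$, the Lyapunov property of $S$ along trajectories (Proposition \ref{Lyapunov}), combined with $U^u(t,\cdot)\to\mathbf{0}$ for $u\in\mathcal{D}_{\mathbf{0}}\cap H^1_0((0,1))$ and $U^u(t,\cdot)\to z^{(n)}$ for $u\in\mathcal{W}\cap H^1_0((0,1))$, yields $S(u)\ge 0$ in the first case and $S(u)\ge S(z)>0$ in the second; points of $C_D([0,1])\setminus H^1_0((0,1))$ have $S=+\infty$ by convention.

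Arguing by contradiction, assume a sequence $(u_n)\subseteq\overline{\mathcal{D}_{\mathbf{0}}}$ satisfies $S(u_n)\le a$ and $\|u_n\|_\infty\to\infty$. The key step, which is where the hypothesis $1<p<5$ enters, is to show that $\|u_n\|_2\to\infty$ as well. Starting from $\|u_n'\|_2^2\le 2a+\frac{2}{p+1}\|u_n\|_{p+1}^{p+1}$ (which follows from $S(u_n)\le a$) and combining it with the pointwise H\"older bound $\|u_n\|_{p+1}^{p+1}\le\|u_n\|_\infty^{p-1}\|u_n\|_2^2$ and the one-dimensional Agmon inequality $\|u_n\|_\infty^2\le 2\|u_n\|_2\|u_n'\|_2$ (valid on $H^1_0((0,1))$), one obtains
\begin{equation*}
\|u_n\|_\infty^4\;\le\;8a\,\|u_n\|_2^2+\tfrac{8}{p+1}\,\|u_n\|_\infty^{p-1}\|u_n\|_2^4.
\end{equation*}
If $\|u_n\|_2$ remained bounded along a subsequence, the right-hand side would grow at most like $\|u_n\|_\infty^{p-1}$; since $p-1<4$ this is incompatible with $\|u_n\|_\infty^4\to\infty$, whence $\|u_n\|_2\to\infty$.

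To close the argument I would analyze $F_n(t):=\|U^{u_n}(t,\cdot)\|_2^2$. Since $S$ is a Lyapunov functional one has $S(U^{u_n}(t,\cdot))\le S(u_n)\le a$ for all $t\ge 0$; combined with the Jensen-type inequality $\|v\|_{p+1}^{p+1}\ge\|v\|_2^{p+1}$ valid on $[0,1]$, a direct computation using the PDE gives
\begin{equation*}
F_n'(t)=-4S(U^{u_n}(t,\cdot))+\tfrac{2(p-1)}{p+1}\|U^{u_n}(t,\cdot)\|_{p+1}^{p+1}\;\ge\;-4a+\tfrac{2(p-1)}{p+1}\,F_n(t)^{(p+1)/2}.
\end{equation*}
Since $F_n(0)=\|u_n\|_2^2\to\infty$, for $n$ large the nonlinear term strictly dominates $-4a$ and a standard ODE comparison forces $F_n$ to blow up in finite time; since $\|U^{u_n}(t,\cdot)\|_\infty\ge F_n(t)^{1/2}$ on $[0,1]$, this contradicts the global existence of $U^{u_n}$ guaranteed by $u_n\in\overline{\mathcal{D}_{\mathbf{0}}}$. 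The main obstacle is the interpolation estimate in the previous paragraph, where the hypothesis $p<5$ is used decisively; the final ODE blow-up step is then routine once $\|u_n\|_2\to\infty$ is in hand.
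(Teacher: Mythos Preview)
Your argument is correct and uses exactly the same two ingredients as the paper's proof: the differential inequality
\[
\frac{d}{dt}\|U^u(t,\cdot)\|_2^2 \;\ge\; -4a + \tfrac{2(p-1)}{p+1}\,\|U^u(t,\cdot)\|_2^{p+1}
\]
to control the $L^2$ norm via global existence, and the one-dimensional Gagliardo--Nirenberg/Agmon interpolation combined with $\|v\|_{p+1}^{p+1}\le\|v\|_\infty^{p-1}\|v\|_2^2$ and $\|v'\|_2^2\le 2a+\tfrac{2}{p+1}\|v\|_{p+1}^{p+1}$ (this is where $p<5$ enters).

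The only difference is structural. The paper runs the steps in the opposite order and directly: it first applies the ODE inequality to conclude that $\|v\|_2^2$ is bounded by an explicit constant $B$ for every $v$ in the set (since otherwise the solution would blow up), and then feeds this uniform $L^2$ bound into the interpolation chain to produce an explicit bound on $\|v\|_\infty$. You instead argue by contradiction, first using the interpolation to deduce $\|u_n\|_2\to\infty$ from $\|u_n\|_\infty\to\infty$, and then invoking the ODE blow-up. The paper's route yields a quantitative bound with no extraneous steps (in particular your Step~1 showing $S\ge 0$ on $\overline{\mathcal{D}_{\mathbf 0}}$ is not needed), while your contradiction framing makes the role of the hypothesis $p<5$ perhaps slightly more transparent; mathematically the two are interchangeable.
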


\begin{proof} For $a > 0$ and $v \in \{ u \in
\overline{\mathcal{D}_{\mathbf{0}}} : S(u) \leq a \}$ consider $\psi : \R_{\geq
0} \rightarrow \R_{\geq 0}$ given by
$$
\psi(t):= \int_0^1 (U^v(t,\cdot))^2.
$$ A direct computation shows that for every $t_0 > 0$ the function $\psi$
satisfies
$$
\frac{d\psi(t_0)}{dt} = - 4 S( U^v(t,\cdot) ) + 2\left(\frac{p-1}{p+1}\right)
\int_0^1 |U^v(t,\cdot)|^{p+1}.
$$ By Proposition \ref{Lyapunov} and Hölder's inequality we then obtain
$$
\frac{d\psi(t_0)}{dt} \geq - 4 a + 2\left(\frac{p-1}{p+1}\right)
(\psi(t_0))^{\frac{p+1}{2}}
$$ which implies that $\psi(0) \leq B:=
\left[2a\left(\frac{p+1}{p-1}\right)\right]^{\frac{2}{p+1}}$ since otherwise
$\psi$ (and therefore $U^v$) would explode in finite time. Now, by the
Gagliardo-Niremberg interpolation inequality (recall that $v$ is absolutely
continuous since $S(v) < +\infty$)
$$
\| v \|_\infty^2 \leq C_{GN} \| v \|_{L^2} \| \p_x v \|_{L^2},
$$ we obtain
$$
\int_0^1 |v|^{p+1} \leq \| v \|_{L^2}^2 \|v \|_{\infty}^{p-1} \leq C_{GN}^{\frac{p-1}{2}}
B^{\frac{p+3}{4}} \| \p_x v \|_{L^2}^{\frac{p-1}{2}} \leq C_{GN}^{\frac{p-1}{2}}
B^{\frac{p+3}{4}} (2a + \int_0^1 |v|^{p+1})^{\frac{p-1}{4}}
$$ which for $p < 5$ implies the bound
\begin{equation}\label{bound1}
\int_0^1 |v|^{p+1} \leq B':=\max \left\{ 2a , \left[C_{GN}^{\frac{p-1}{2}}
B^{\frac{p+3}{4}}2^{\frac{p-1}{4}}\right]^{\frac{4}{5-p}}\right\}.
\end{equation} Since $S(v) \leq a$ we see that \eqref{bound1} implies the bound
$\| \p_x v \|_{L^2} \leq \sqrt{2B'}$. Thus, we conclude
$$
\| v \|_\infty \leq \left( C_{GN}^{p-1} 2BB'\right)^{\frac 1 4}
$$ which shows that $\{ u \in \overline{\mathcal{D}_{\mathbf{0}}} : 0 \leq S(u)
\leq a \}$ is bounded.
\end{proof}

\begin{obs} The proof of Lemma \ref{compacidad} is the only instance throughout
our work in which the assumption $p < 5$ is used. As a matter of fact, we only require the
weaker condition that there exists $\alpha > 0$ such that the set $\{ u \in
\overline{\mathcal{D}_{\mathbf{0}}} : S(u) \leq S(z) + \alpha \}$ is bounded.
However, determining the validity of this condition for arbitrary $p > 1$ does
not seem simple.
%\textcolor{red}{Decir algo más? Hablar de las self-similar solutions?}
\end{obs}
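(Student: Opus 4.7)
The remark makes two bookkeeping claims: (a) the hypothesis $p<5$ is never invoked outside the proof of Lemma \ref{compacidad}, and (b) the only consequence of that lemma actually consumed downstream is boundedness of a single sublevel set $\{u \in \overline{\mathcal{D}_\mathbf{0}} : S(u) \leq S(z) + \alpha\}$ for some (possibly very small) $\alpha > 0$. Since both are meta-statements about how the rest of the paper uses Lemma \ref{compacidad}, the plan is an audit of Sections \ref{secg}--\ref{secfinal} rather than a new estimate.

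First I would list every invocation of Lemma \ref{compacidad} or of the exponent bound $p<5$ in the subsequent sections. Based on the announced goal of Section \ref{secg}, I expect the auxiliary domain $G$ to be built as a connected component of a set of the form $\{u \in \overline{\mathcal{D}_\mathbf{0}} : S(u) < S(z) + \alpha\}$ (or a slight thickening thereof) for one fixed small $\alpha > 0$, and Lemma \ref{compacidad} to be cited exactly once, to guarantee that this candidate $G$ is bounded in $C_D([0,1])$ so that the local large-deviation estimates \eqref{LDP1}--\eqref{grandes1} may be applied inside it. Since the construction and verification of the remaining defining properties of $G$ (containing $B_c$, escape from $G$ being asymptotically equivalent to explosion, escape barrier $\Delta$) use only that $g$ is locally Lipschitz and that $S$ is a Lyapunov functional along the deterministic flow, no further invocation of $p<5$ should be required.

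Once $G$ is fixed, the proofs of Theorems \ref{thm2}--\ref{thm5} run inside a bounded set via pathwise comparison with the deterministic flow, large-deviation lower and upper bounds on the energy cost of reaching $\partial G$, strong Markov arguments, and Corollary \ref{exploacot} for the post-escape excursion; each of these uses boundedness of $G$ and local Lipschitzness of $g$, but not the value of $p$ itself. This proves claim (a). Claim (b) is then immediate because the single $\alpha$ cutting out $G$ is fixed once and for all; no argument requires boundedness of $\{S \leq a\}$ for $a$ much larger than $S(z)$. The main obstacle is one of thoroughness: isolated appeals to interpolation, Sobolev embedding, or comparison with supersolutions in Sections \ref{secescapedeg}--\ref{secfinal} must each be checked to confirm they rely only on local Lipschitzness of $g$ inside the bounded set $G$, and never on a global integrability exponent that would silently require $p<5$. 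Should any such step exceed the scope of a single sublevel set, one would either shrink $\alpha$ further or replace the appeal to Lemma \ref{compacidad} by the weaker hypothesis stated in the remark, which by definition still supplies the needed boundedness at that one energy level.
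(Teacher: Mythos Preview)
The paper offers no proof of this remark; it is stated as a self-evident bookkeeping observation, and your audit plan is exactly the right way to justify it. Your conclusions are correct: Lemma~\ref{compacidad} is cited precisely once, in Section~\ref{secg}, and only to guarantee that the single sublevel set $\{u \in \overline{\mathcal{D}_{\mathbf{0}}} : S(u) \leq S(z) + \zeta_1\}$ is bounded, which is exactly the weaker condition named in the remark.

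One small correction to your expectation about the construction: $G$ is not itself cut out as (a thickening of) a sublevel set of $S$. Rather, the paper first chooses $n_0$ large enough that the sublevel set $\{S \leq S(z)+\zeta_1\} \cap \overline{\mathcal{D}_{\mathbf{0}}}$ sits inside $B_{n_0-1}$, then defines the pre-domain $\tilde{G} = B_{n_0} \cap \overline{\mathcal{D}_{\mathbf{0}}}$ and finally $G = \tilde{G} \cup B_{r_z}(\pm z)$. The boundedness from Lemma~\ref{compacidad} is used only to ensure such an $n_0$ exists; once $n_0$ is fixed, boundedness of $G$ is automatic from $G \subseteq B_{n_0}$. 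This does not affect your audit's conclusion, but it means the downstream arguments in Sections~\ref{secescapedeg}--\ref{secfinal} never refer back to the sublevel set at all---they work entirely inside the explicitly bounded ball $B_{n_0}$, which makes the verification of claim~(a) even cleaner than you anticipated.
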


Before we can carry on with the next proposition, we need to introduce some definitions.

\begin{defi} Given $T > 0$ and $\varphi \in C_{D}([0,T] \times [0,1])$ we define
the \textit{rate} $I(\varphi)$ of $\varphi$ by the formula
$$
I(\varphi) := I^{\varphi(0,\cdot)}_T(\varphi),
$$ where $I^{\varphi(0,\cdot)}_T$ is defined as in Section \ref{secLDP}.
\end{defi}

\begin{defi} We say that a function $\varphi \in C_D([0,T]\times [0,1])$ is
\textit{regular} if both derivatives $\p_t \varphi$ and $\p^2_{xx} \varphi$
exist and belong to $C_D ([0,T]\times [0,1])$.
\end{defi}

\begin{prop}\label{costo} Given $T > 0$, for any $\varphi \in C_{D} \cap
W^{1,2}_2 ([0,T] \times [0,1])$ such that $\p^2_{xx}\varphi(0,\cdot)$ exists and
belongs to $C_D([0,1])$ we have that
\begin{equation}\label{cotainftasa}
I(\varphi) \geq 2 \left[\sup_{0 \leq T' \leq T} \left(
S(\varphi(T',\cdot))-S(\varphi(0,\cdot))\right)\right].
\end{equation}
\end{prop}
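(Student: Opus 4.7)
The plan is to prove the pointwise bound $I(\varphi) \geq 2\bigl(S(\varphi(T',\cdot)) - S(\varphi(0,\cdot))\bigr)$ for each fixed $T' \in [0,T]$ and then take the supremum. The key observation is that \eqref{MainPDE} is the formal $L^2$-gradient flow of $S$, so that the rate function $I$ is the Freidlin--Wentzell action for a gradient system. For such actions there is a standard ``completing the square'' identity which yields exactly a factor of $2$ between the action and the potential difference.

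The first step is to establish the chain-rule identity
\begin{equation}\label{chainrule}
S(\varphi(T',\cdot)) - S(\varphi(0,\cdot)) = \int_0^{T'}\!\!\int_0^1 \p_t\varphi\bigl(-\p_{xx}\varphi - g(\varphi)\bigr)\,dx\,dt,
\end{equation}
which comes from differentiating $t \mapsto S(\varphi(t,\cdot))$ in time. Spelling this out: the derivative of $\int_0^1 \tfrac12(\p_x\varphi)^2\,dx$ gives, after spatial integration by parts, $-\int_0^1 \p_{xx}\varphi\,\p_t\varphi\,dx$ with vanishing boundary contribution because $\varphi \in C_D$ forces $\p_t\varphi(t,0)=\p_t\varphi(t,1)=0$; the potential term $-\int_0^1 |\varphi|^{p+1}/(p+1)\,dx$ differentiates directly to $-\int_0^1 g(\varphi)\p_t\varphi\,dx$. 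Justifying \eqref{chainrule} at the regularity $\varphi \in C_D \cap W^{1,2}_2$ is the main technical hurdle, and I expect to handle it by mollifying $\varphi$ in time in a way that preserves the homogeneous Dirichlet boundary conditions, performing the computation on smooth approximants, and passing to the limit using the $L^2$ bounds on $\p_t\varphi$ and $\p_{xx}\varphi$ together with the Sobolev embedding $H^1_0((0,1)) \hookrightarrow C_D([0,1])$ (which controls the nonlinear term $g(\varphi) = \varphi|\varphi|^{p-1}$ uniformly on bounded sets in $H^1_0$). The hypothesis that $\p_{xx}\varphi(0,\cdot)$ exists and belongs to $C_D([0,1])$ ensures there is no exceptional behavior at the initial time when passing to the limit.

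The second step is purely algebraic: from $(a+b)^2 \geq 0$ we obtain $|a-b|^2 \geq -4ab$ for all $a,b \in \R$. Applying this pointwise with $a = \p_t\varphi$ and $b = \p_{xx}\varphi + g(\varphi)$ yields
\begin{equation}\label{ptwise}
\bigl|\p_t\varphi - \p_{xx}\varphi - g(\varphi)\bigr|^2 \geq 4\,\p_t\varphi\bigl(-\p_{xx}\varphi - g(\varphi)\bigr).
\end{equation}
Integrating \eqref{ptwise} over $[0,T'] \times [0,1]$, extending the left-hand integral up to $T$ (permissible since the integrand is non-negative), and plugging in \eqref{chainrule} on the right-hand side gives
$$
2\,I(\varphi) \;=\; \int_0^T\!\!\int_0^1 \bigl|\p_t\varphi - \p_{xx}\varphi - g(\varphi)\bigr|^2\,dx\,dt \;\geq\; 4\bigl(S(\varphi(T',\cdot)) - S(\varphi(0,\cdot))\bigr).
$$
This is the desired inequality for each fixed $T' \in [0,T]$. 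Taking $\sup_{T' \in [0,T]}$ on the right concludes the proof, with the factor of $2$ arising precisely from the gradient-flow algebraic identity $|\p_t\varphi + \nabla S(\varphi)|^2 = |\p_t\varphi - \nabla S(\varphi)|^2 + 4\,\p_t\varphi\cdot \nabla S(\varphi)$.
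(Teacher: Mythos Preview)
Your argument is essentially the same as the paper's at the algebraic level: both use the identity $(a-b)^2 = (a+b)^2 - 4ab$ (equivalently, your $(a+b)^2 \geq 0$) with $a = \p_t\varphi$ and $b = \p_{xx}\varphi + g(\varphi)$, recognize the cross term as $\tfrac{d}{dt}S(\varphi(t,\cdot))$, drop the non-negative square, and take the supremum in $T'$.

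The only difference is how the regularity is handled. The paper does not attempt to justify the chain rule \eqref{chainrule} directly at $W^{1,2}_2$ regularity. Instead it first proves the inequality for \emph{regular} $\varphi$ (where the chain rule is immediate), and then invokes \cite[Theorem~6.9]{FJL} to approximate a general $\varphi \in C_D \cap W^{1,2}_2$ by regular $\varphi_n$ with the \emph{same} initial datum $\varphi(0,\cdot)$ and with $I(\varphi_n) \to I(\varphi)$; passing to the limit in the inequality then only requires lower semicontinuity of $S$ on the right-hand side (Proposition~\ref{A.4}), not the full chain-rule identity. This is where the hypothesis $\p_{xx}\varphi(0,\cdot) \in C_D([0,1])$ is actually used: it is needed for the approximation theorem in \cite{FJL} to apply. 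Your mollification-in-time route aims for the stronger statement \eqref{chainrule} itself; it should work, but you would have to extend $\varphi$ past $t=0$ (e.g.\ by following the deterministic flow backwards, which again uses the hypothesis on $\p_{xx}\varphi(0,\cdot)$) and argue carefully that the mollified $S(\varphi_\epsilon(T',\cdot))$ converges, not merely that its $\liminf$ is bounded below. The paper's route trades that work for a citation and a weaker (but sufficient) limit argument.
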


\begin{proof}Assume first that $\varphi$ is regular. Using that $(x-y)^2 =
(x+y)^2 -4xy$ for $x,y \in \R$, for any $0 \leq T' \leq T$ we obtain that
\begin{align*}
I(\varphi) & = \frac{1}{2} \int_0^T \int_0^1 |\p_t \varphi - \p_{xx}^2 \varphi -
g(\varphi)|^2 \geq \frac{1}{2} \int_0^{T'} \int_0^1 |\p_t \varphi - \p_{xx}^2
\varphi - g(\varphi)|^2 \\
\\
& = \frac{1}{2} \int_0^{T'} \int_0^1 \left[|\p_t \varphi + \p_{xx}^2 \varphi +
g(\varphi)|^2 - 4\left( \p_{xx}^2 \varphi + g(\varphi)\right)\p_t
\varphi\right]\\
\\
& =  \frac{1}{2} \int_0^{T'} \left[ \left( \int_0^1 |\p_t \varphi + \p_{xx}^2
\varphi + g(\varphi)|^2\right) + 4 \frac{dS( \varphi(t,\cdot) )}{dt}\right]\\
\\
& \geq 2\left( S(\varphi(T',\cdot))-S(\varphi(0,\cdot))\right).
\end{align*} Taking supremum on $T'$ yields the result in this particular case.
Now, if $\varphi$ is not necessarily regular then by \cite[Theorem~6.9]{FJL} we
may take a sequence $(\varphi_n)_{n \in \N}$ of regular functions converging to
$\varphi$ on $C_{D_{\varphi(0,\cdot)}}([0,T]\times[0,1])$ and such that $\lim_{n
\rightarrow +\infty} I(\varphi_n) = I(\varphi)$ is satisfied. The result in the
general case then follows from the validity of \eqref{cotainftasa} for regular
functions and the lower semicontinuity of $S$.
\end{proof}

In order to properly interpret the content of Proposition \ref{costo} we need to
introduce the concept of \textit{quasipotential} for our system. We do so in the
following definitions.

\begin{defi} Given $u,v \in C_D([0,1])$ a \textit{path from $u$ to $v$} is a
continuous function $\varphi \in C_{D}([0,T] \times [0,1])$ for some $T > 0$
such that $\varphi(0,\cdot)=u$ and $\varphi(T,\cdot)=v$.
\end{defi}

\begin{defi} Given $u,v \in C_D([0,1])$ we define the \textit{quasipotential}
$V(u,v)$ \mbox{from $u$ to $v$} by the formula
$$
V(u,v)= \inf \{ I(\varphi) : \varphi \text{ path from $u$ to $v$}\}.
$$ Furthermore, given a subset $B \subseteq C_D([0,1])$ we define the
quasipotential from $u$ to $B$ as
$$
V(u,B):= \inf \{ V(u,v) : v \in B \}.
$$ We refer the reader to the appendix for a review of the properties of $V$ we shall use.
\end{defi}

In a limiting sense, made rigorous through the large deviations estimates in
Section \ref{secLDP}, the quasipotential $V(u,v)$ represents the energy cost for
the stochastic system to travel from $u$ to (an arbitrarily small neighborhood
of) $v$. Notice that Lemma \ref{compacidad} implies that $\lim_{n \rightarrow
+\infty} V(\mathbf{0},\partial B_{n} \cap \mathcal{D}_{\mathbf{0}})=+\infty$,
which says that the energy cost for the stochastic system starting from
$\mathbf{0}$ to explode in a finite time while remaining inside
$\mathcal{D}_\mathbf{0}$ is infinite. Thus, should explosion occur, it would
involve the system stepping outside $\mathcal{D}_{\mathbf{0}}$ and crossing
$\mathcal{W}$. In view of Proposition \ref{costo}, the crossing of $\mathcal{W}$
will typical take place through $\pm z$ since the energy cost for performing
such a feat is the lowest there. Therefore, if we wish the escape from $G$ to
capture the essential characteristics of the explosion phenomenon in the
stochastic system (at least when
starting from $\mathbf{0}$) then it is important to guarantee that this escape
involves passing through (an arbitrarily small neighborhood of) $\pm z$. Not
only this, but we also require that once the system escapes this domain then it
explodes with overwhelming probability in a quick fashion, i.e. before some time
$\tau^*$ which does not depend on $\varepsilon$. More precisely, we wish to
consider a bounded domain $G \subseteq C_D([0,1])$ verifying the following
properties:
\begin{cond}\label{assumpg}$\,$
\begin{enumerate}
\item [i.] There exists $r_{\mathbf{0}}>0$ such that $B_{2r_\mathbf{0}}
\subseteq \mathcal{D}_{\mathbf{0}} \cap G$.
\item [ii.] There exists $c > 0$ such that $B_c \subseteq B_{r_\mathbf{0}}$ and
for all
$v \in B_c$ the solution $U^{v}$ to \eqref{MainPDE} with initial datum $v$ is
globally defined and converges to $\mathbf{0}$ without
escaping $B_{r_\mathbf{0}}$.
%\item [iii.] $V(\mathbf{0}, \partial G) = \Delta$.
\item [iii.] There exists a closed subset $\p^{\pm z}$ of the boundary $\partial
G$ which satisfies
\begin{enumerate}
\item [$\bullet$] $V(\mathbf{0},\partial G - \partial^{\pm z} ) >
V(\mathbf{0},\partial^{\pm z}) = V( \mathbf{0}, \pm z)$.
\item [$\bullet \bullet$] $\p^{\pm z}$ is contained in $\mathcal{D}_e^*$ and at
a positive distance from its boundary.
\end{enumerate}
\end{enumerate}
\end{cond}

In principle, we have seen that such a domain is useful to study the behavior of
the explosion time whenever the initial datum of the stochastic system is (close
to) the origin. Nevertheless, by the local estimate \eqref{grandes1}, when
starting inside $\mathcal{D}_\mathbf{0}$ the system will typically visit a small
neighborhood of the origin before crossing $\mathcal{W}$ and thus such a choice
of $G$ will also be suitable to study the explosion time for arbitrary initial
data in $\mathcal{D}_\mathbf{0}$.

The construction of the domain $G$ is done as follows. Since
$\mathcal{D}_{\mathbf{0}}$ is open we may choose $r_{\mathbf{0}} > 0$ such that
$B_{3r_{\mathbf{0}}}$ is contained in $\mathcal{D}_{\mathbf{0}}$. Moreover, by
the asymptotic stability of $\mathbf{0}$ we may choose $c > 0$ verifying (ii) in
Conditions \ref{assumpg}. Now, given $\zeta_1 > 0$ by Lemma \ref{compacidad} we
may take $n_0 \in \N$ such that $n_0 > 3r_{\mathbf{0}}$ and the set $\{ u \in
\overline{\mathcal{D}_{\mathbf{0}}} : S(u) \leq S(z) + \zeta_1 \}$ is contained
in the interior of the ball $B_{n_0-1}$. We then define the pre-domain
$\tilde{G}$ as
\begin{equation}\label{predomain}
\tilde{G}:= B_{n_0} \cap \overline{\mathcal{D}_{\mathbf{0}}}.
\end{equation}
Notice that since both $B_{n_0}$ and $\overline{\mathcal{D}_\mathbf{0}}$ are
closed sets we have that
$$
\partial \tilde{G} = \left( \mathcal{W} \cap B_{n_0}\right) \cup \left( \partial
B_{n_0} \cap \mathcal{D}_\mathbf{0}\right)
$$ which, by the particular choice of $n_0$ and Proposition \ref{Lyapunov},
implies $\min_{u \in \p \tilde{G}} S(u) =S(z)$. By Propositions \ref{costo} and
\ref{A.4} we thus obtain $V(\mathbf{0},\p \tilde{G}) \geq \Delta$. Next, if for
$u \in C_D([0,1])$ we let $u^-$ denote the negative part of $u$, i.e. $u^- =
\max \{ - u, 0 \}$, then since $z^- = \mathbf{0}$ we may find $\tilde{r}_z > 0$
such that $u^- \in \mathcal{D}_{\mathbf{0}}$ for any $u \in B_{\tilde{r}_z}(z)$.
Finally, if for $r > 0$ we write $B_{r}(\pm z) := B_{r}(z) \cup B_{r}(-z)$ and
take $r_{z} > 0$ such that $r_z < \frac{\tilde{r}_z}{2}$, $B_{2r_z}(\pm z)$ is
contained in the interior of $B_{n_0}$ and $z$ is the unique equilibrium point
of the system lying inside $B_{r_z}(z)$, then we define our final domain $G$ as
$$
G= \tilde{G} \cup B_{r_z}(\pm z).
$$ Let us now check that this domain satisfies all the required conditions. We
begin by noticing that (i) and (ii) in Conditions \ref{assumpg} are immediately
satisfied by the \mbox{choice of $n_0$.}
Now, let us also observe that for any $r > 0$
\begin{equation}\label{lejosdelminimo}
\inf\{ S(u) : u \in \p \tilde{G} - B_{r}(\pm z)\} > S(z).
\end{equation} Indeed, if this is not the case then, since we have $S(z)=\inf_{u
\in \mathcal{W}} S(u)$ by Corollary \ref{corzminimo}, there exists a sequence
$(u_k)_{k \in \N} \subseteq \left[\mathcal{W}\cap B_{n_0} - B_{r}(\pm z)\right]$
such that $\lim_{k \rightarrow +\infty} S(u_k) = S(z)$. Then, \mbox{by
Proposition \ref{G.1}} we have that there exists $t_0 > 0$ sufficiently small
satisfying
$$
\sup_{k \in \N} \left[\sup_{t \in [0,t_0]} \| U^{u_k}(t,\cdot) \|_\infty\right]
< +\infty \hspace{1cm}\text{ and }\hspace{1cm} \inf_{k \in \N} \|
U^{u_k}(t_0,\cdot) - (\pm z) \|_\infty > \frac{r}{2}
$$ and therefore by Proposition \ref{A.2} we may conclude that there exists a
subsequence $(u_{k_j})_{j \in \N}$ such that $U^{u_{k_j}}(t_0,\cdot)$ converges
to a limit $u_{\infty} \in C_D([0,1])$ as $j \rightarrow +\infty$. Since the
potential is lower semicontinuous and $\mathcal{W}$ is both closed and invariant
under the deterministic flow, by Proposition \ref{Lyapunov} we conclude that
$u_\infty = \pm z$ which contradicts the fact that the sequence
$(U^{u_{k_j}}(t_0,\cdot))_{j \in \N}$ is at a positive distance from these
equilibriums. Hence, we obtain \eqref{lejosdelminimo}.
In particular, this implies that $V(\mathbf{0}, \p \tilde{G} - B_{r}(\pm z)) >
\Delta$ is satisfied for any choice of $r > 0$. Let us then take $\zeta_2 > 0$
such that $\Delta + \zeta_2 < V(\mathbf{0},\p \tilde{G} - B_{\frac{r_z}{2}}(\pm
z))$ and define
$$
\tilde{\p}^z:= \{ u \in \p B_{r_z}(z) \cap \overline{\mathcal{D}_e} :
V(\mathbf{0},u) \leq \Delta + \zeta_2 \}.
$$
Notice that $\tilde{\p}^z$ is nonempty and satisfies $d(\tilde{\p}^z,
\mathcal{W}) > 0$. Indeed, it is possible to show that for each $\alpha > 0$
there exists a path from $\mathbf{0}$ to $B_{r_z}(z) \cap
\overline{\mathcal{D}_e}$ with rate function less than $\Delta + \alpha$, which
immediately implies that $\tilde{\p}^z$ is nonempty. This path is essentially
obtained by going from $\mathbf{0}$ to $z$ by describing the orbit given by the
unstable manifold $\mathcal{W}_u^z$ in reverse order, then making a linear
interpolation towards $(1+h)z$ for some $h \in (0,r_z)$ sufficiently small and
ultimately following the deterministic flow until it reaches $B_{r_z}(z)$
(notice that this will eventually happen since $(1+h)z \in \mathcal{D}_e$ by
Proposition \ref{descomp2} and $\mathcal{D}_e$ is invariant). We refer to
\cite[Lemma~4.3]{SJS} for details on the construction. On the
other hand, if $d(\tilde{\p}^z, \mathcal{W}) = 0$ then we can construct
sequences $(u_k)_{k \in \N} \subseteq \mathcal{W}$ and
$(v_k)_{k \in \N} \subseteq \tilde{\p}^z$ such that $\lim_{k \rightarrow
+\infty} d(u_k,v_k)=0$. The growth estimates on the appendix imply that there
exists $t_1 > 0$ sufficiently small such that
$$
\lim_{k \rightarrow +\infty}
d(U^{u_k}(t_1,\cdot),U^{v_k}(t_1,\cdot))=0\hspace{0.2cm}\text{and}\hspace{0.2cm}
\frac{r_z}{2} < \inf_{k \in \N} d(U^{v_k}(t_1,\cdot),z) \leq \sup_{k \in \N}
d(U^{v_k}(t_1,\cdot),z) <  2r_z.
$$ By Proposition \ref{A.2} we obtain that for some appropriate subsequence we
have
$$
\lim_{j \rightarrow +\infty} U^{u_{k_j}}(t_1,\cdot) =\lim_{j \rightarrow
+\infty} U^{v_{k_j}}(t_1,\cdot) = v_\infty.
$$ Observe that $v_\infty \in \mathcal{W} \cap B_{n_0} - B_{\frac{r_z}{2}}(\pm
z)$ and thus that $v_\infty \in \p \tilde{G}- B_{\frac{r_z}{2}}(\pm z)$.
Furthermore, by the lower semicontinuity of $V(\mathbf{0},\cdot)$ and the fact
that the mapping $t \mapsto V(\mathbf{0},U^u(t,\cdot))$ is monotone decreasing
for any $u \in C_D([0,1])$ (see the appendix for details), we obtain that
$V(\mathbf{0},v_\infty) \leq \Delta + \zeta_2$ which, together with the previous
observation, implies the contradiction $\Delta + \zeta_2 \geq V(\mathbf{0},\p
\tilde{G} - B_{\frac{r_z}{2}}(\pm z))$. Hence, we see that $d(\tilde{\p}^z,
\mathcal{W}) > 0$ and thus we may define
$$
\p^z = \left\{ u \in \p B_{r_z}(z) \cap \overline{\mathcal{D}_e} :
d(u,\mathcal{W}) \geq \frac{d(\tilde{\p}^z,\mathcal{W})}{2} \right\}
$$ and set $\p^{\pm z}:= \p^z \cup (-\p^z)$. Since one can easily check that
$$
\p G = [ \p \tilde{G} - B_{r_z}(\pm z) ] \cup [\p B_{r_z}(\pm z) \cap
\overline{D_e} ]
$$ we conclude that $V(\mathbf{0},\p G - \p^{\pm z}) \geq \Delta + \zeta_2$. On
the other hand, by using Proposition \ref{costo} together with the existence of
paths as described above, which go from $\mathbf{0}$ to $\tilde{\p}^z$ by
passing through $z$ and have a rate function which can be made arbitrarily close
to $\Delta$, we get that $V(\mathbf{0},\p^z)=V(\mathbf{0},\tilde{\p}^z)=
V(\mathbf{0},\pm z)=\Delta$ from which one obtains
$$
V(\mathbf{0},\p G - \p^{\pm z}) > V(\mathbf{0},\p^z) = V(\mathbf{0},\pm z).
$$ Furthermore, by the comparison principle and the choice of $\tilde{r}_z$ we
have \mbox{$B_{\tilde{r}_z}(\pm z) \cap \mathcal{D}_e \subseteq
\mathcal{D}_e^*$.} Therefore, since we clearly have $d(\p^{\pm z}, \mathcal{W})
> 0$ by definition of $\p^{\pm z}$, upon recalling that $\p^{\pm z} \subseteq
\mathcal{D}_e$ and $r_z < \frac{\tilde{r}_z}{2}$ we see that $\p^{\pm z}
\subseteq \mathcal{D}_e^+$ and $d(\p^{\pm z}, \p \mathcal{D}_e^*) \geq
\min\{d(\p^{\pm z}, \mathcal{W}), \frac{\tilde{r}_z}{2}\} > 0$, so that
condition (iii) also holds. See Figure \ref{fig2}.

\begin{figure}
	\centering
	\includegraphics[width=8cm]{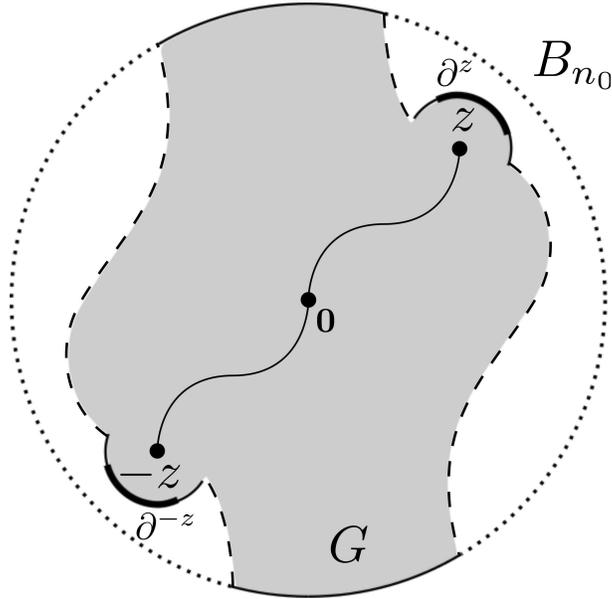}
	\caption{The auxiliary domain $G$}
	\label{fig2}
\end{figure}

\begin{obs}\label{obsequivG} Let us notice that, by Corollary \ref{exploacot},
($\bullet \bullet$) in Conditions \ref{assumpg} implies that there exist
constants $\tau^*,C > 0$ such that
$$
\sup_{u \in \p^{\pm z}} P_u (\tau_\varepsilon > \tau^* ) \leq
e^{-\frac{C}{\varepsilon^2}}
$$ for all $\varepsilon > 0$ sufficiently small. Since ($\bullet$) guarantees
that the escape from $G$ will typically take place through $\p^{\pm z}$, this
tells us that both $\tau_\varepsilon$ and $\tau_\varepsilon(\p G)$ are
asymptotically equivalent, so that it will suffice to study the escape from $G$
in order to establish each of our results.
\end{obs}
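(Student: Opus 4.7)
The first inequality of the remark is a direct application of Corollary \ref{exploacot} to the set $\mathcal{K}=\partial^{\pm z}$. Indeed, $\partial^{\pm z} \subseteq \partial B_{r_z}(\pm z)$ is bounded and, by condition ($\bullet \bullet$) in Conditions \ref{assumpg}, it lies in $\mathcal{D}_e^*$ at a positive distance from $\partial \mathcal{D}_e^*$, so the corollary applies verbatim and furnishes the constants $\tau^*$ and $C$.

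For the asymptotic equivalence, the plan is to show the quantitative bound
$$
\sup_{u \in \mathcal{K}} P_u\!\left( \tau_\varepsilon > \tau_\varepsilon(\partial G) + \tau^*\right) \leq e^{-C/\varepsilon^2}
$$
for every bounded $\mathcal{K} \subseteq \mathcal{D}_{\mathbf{0}}$ at positive distance from $\partial \mathcal{D}_{\mathbf{0}}$. Combined with the trivial opposite direction $\tau_\varepsilon(\partial G) \leq \tau_\varepsilon$, which follows from $G$ being bounded (the trajectory must leave $G$ before blowing up), this produces an $O(1)$ discrepancy between the two stopping times that is negligible against the $e^{\Delta/\varepsilon^2}$ scale of Theorem \ref{thm2} and innocuous for the limits of Theorems \ref{thm4}--\ref{thm5}. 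The strategy for the displayed bound is the strong Markov decomposition at $\tau_\varepsilon(\partial G)$,
$$
P_u\!\left(\tau_\varepsilon > \tau_\varepsilon(\partial G)+\tau^*\right) \leq P_u\!\left(U^\varepsilon(\tau_\varepsilon(\partial G),\cdot)\notin \partial^{\pm z}\right)+\sup_{v\in\partial^{\pm z}} P_v(\tau_\varepsilon>\tau^*).
$$
The second summand is handled by the first part of the remark; the first is an exit-distribution estimate controlled by condition ($\bullet$), the strict quasipotential gap $V(\mathbf{0},\partial G \setminus \partial^{\pm z}) > V(\mathbf{0},\partial^{\pm z})$.

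The main obstacle is this exit-distribution estimate: since $\tau_\varepsilon(\partial G)$ may itself be of order $e^{\Delta/\varepsilon^2}$, the fixed-horizon upper bound \eqref{grandes1} cannot be invoked directly on the whole trajectory. The plan, to be carried out in Section \ref{secescapedeg}, is a standard Freidlin--Wentzell cycle decomposition: partition the path into successive excursions of bounded deterministic length between returns to a small neighborhood of $\mathbf{0}$ and visits to a slightly enlarged $\partial G$, use the local bounds \eqref{LDP1}--\eqref{grandes1} on each individual excursion to show that one reaching $\partial G \setminus \partial^{\pm z}$ has probability at most $e^{-(V(\mathbf{0},\partial G \setminus \partial^{\pm z})-\delta)/\varepsilon^2}$, and sum over the typical number of excursions, whose count is only subexponential in $e^{\Delta/\varepsilon^2}$; the strict quasipotential gap then beats the counting. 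A preliminary uniform step, obtained by comparing $U^{u,\varepsilon}$ to the deterministic orbit $U^u$ on a bounded time interval via \eqref{grandes1}, brings an arbitrary $u \in \mathcal{K}$ into $B_c \subseteq G$ with overwhelming probability, reducing to the case of starting inside $G$.
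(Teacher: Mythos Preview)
Your proposal is correct and follows essentially the same route as the paper. The paper invokes Corollary~\ref{exploacot} on $\partial^{\pm z}$ for the first inequality, and for the asymptotic equivalence it later (Lemma~\ref{nescapelema3}) uses exactly your strong Markov decomposition at $\tau_\varepsilon(\partial G)$, bounding the first summand by the exit-distribution result Theorem~\ref{teoescape0} and the second by the first part of the remark; the reduction from general $\mathcal{K}$ to $B_c$ is done separately via Lemma~\ref{taubc}, just as you outline in your final paragraph.

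One small point: you state the combined bound as $\le e^{-C/\varepsilon^2}$, but the paper only records $\lim_{\varepsilon\to 0}[\cdots]=0$, since Theorem~\ref{teoescape0} is stated merely as a limit rather than an exponential estimate. Your Freidlin--Wentzell cycle sketch would indeed yield the exponential rate for the exit-distribution term as well, so your version is a mild strengthening rather than an error; just be aware that if you cite the paper's Theorem~\ref{teoescape0} as a black box you only get $o(1)$, which is all that is actually needed downstream.
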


\section{The escape from $G$}\label{secescapedeg}

The problem of escaping a bounded domain with similar characteristics to the
ones detailed in Conditions \ref{assumpg} already appears in the literature. In
\cite{GOV,OV}, the authors study the escape from a finite-dimensional domain
containing a stable equilibrium and only one saddle point. Our domain $G$ bears
the additional difficulties of being infinite-dimensional and also of possibly
containing other unstable equilibria besides $\pm z$. On the other hand, in
\cite{B1} they do deal with an infinite-dimensional domain, but this domain has
unstable equilibria only in its boundary and does not contain any of them in its
interior as opposed to what happens in our current situation. Despite the fact
that our domain does not quite fall into any of the cases studied before, all
the results of interest in our present setting can still be obtained by
combining the ideas from these previous works, eventually making some slight
modifications along the way. We outline below the main results regarding the
escape
from the domain $G$ and refer the reader to \cite{SJS} for details on their
proofs. Hereafter, $c > 0$ is taken as in Conditions \ref{assumpg}.

The first result is concerned with the asymptotic order of magnitude of the exit
time.

\begin{theorem}\label{ecotsuplema1}Given $\delta > 0$ we have
$$
\lim_{\varepsilon \rightarrow 0 } \left[\sup_{u \in B_c} \left| P_{u}
\left( e^{\frac{\Delta - \delta}{\varepsilon^{2}}} < \tau_{\varepsilon}(\partial
G) < e^{\frac{\Delta + \delta}{\varepsilon^{2}}}\right)-1 \right|\right] = 0,
$$ where $\tau^u_\varepsilon(\p G):= \inf \{ t > 0 : U^{u,\varepsilon}(t,\cdot)
\in \p G\}$ denotes the exit time from $G$.
\end{theorem}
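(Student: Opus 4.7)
The statement is a Freidlin--Wentzell type estimate on the exit time from $G$, and both bounds are proved by a cycle/attempt argument based on the local large deviation estimates of Section \ref{secLDP} together with the structural properties of $G$ established in Conditions \ref{assumpg}. The plan is to reduce everything to a single ``elementary'' quantity: the probability that, starting from any $v\in B_c$, the process $U^{v,\varepsilon}$ exits $G$ during a time window of fixed length $T$, and to show that this probability is of order $e^{-\Delta/\varepsilon^2}$ up to subexponential corrections.

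\textbf{Upper bound on $\tau_\varepsilon(\partial G)$.} Fix $\delta>0$. By Condition \ref{assumpg}(iii)($\bullet$), $V(\mathbf{0},\partial^{\pm z})=\Delta$, so there exist $T>0$ and a path $\varphi\in C_{D}([0,T]\times[0,1])$ with $\varphi(0,\cdot)=\mathbf{0}$, $\varphi(T,\cdot)\in\partial^{\pm z}$ and $I(\varphi)<\Delta+\delta/4$. By continuity of the flow, I can concatenate in front of $\varphi$ a short piece of the deterministic trajectory $U^{v}$ from any $v\in B_c$ toward a small neighborhood of $\mathbf{0}$ (at zero rate), producing a path $\varphi^v$ from $v$ to $\partial^{\pm z}$ with the same rate bound, uniformly in $v\in B_c$. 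Choosing $n$ bigger than $\|\varphi\|_\infty$, the lower estimate \aref{LDP1} gives
$$
\inf_{v\in B_c}P_v\bigl(\tau_\varepsilon(\partial G)\le T+1\bigr)\;\ge\; e^{-(\Delta+\delta/2)/\varepsilon^2}
$$
for all $\varepsilon$ small enough. Partitioning $[0,e^{(\Delta+\delta)/\varepsilon^2}]$ into disjoint intervals of length $T+1$ and using the strong Markov property at the endpoints, the probability that $U^{v,\varepsilon}$ has not yet exited $G$ by time $e^{(\Delta+\delta)/\varepsilon^2}$ is at most $\bigl(1-e^{-(\Delta+\delta/2)/\varepsilon^2}\bigr)^{N_\varepsilon}$ with $N_\varepsilon\approx e^{\delta/(2\varepsilon^2)}\to\infty$; this tends to $0$ uniformly in $v\in B_c$. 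The only subtlety is that I need the process to actually be in $B_c$ (or near $\mathbf 0$) at the start of each interval. For this I use Condition \ref{assumpg}(ii) together with \aref{grandes1}: on each failed interval the deterministic flow has brought $U^v$ close to $\mathbf 0$, so with probability $1-e^{-C/\varepsilon^2}$ the stochastic system lies in $B_c$ again at the restart time.

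\textbf{Lower bound on $\tau_\varepsilon(\partial G)$.} This is the harder half. I define return and excursion times: let $\sigma_0=0$ and, inductively, $\theta_k=\inf\{t\ge\sigma_k:U^{v,\varepsilon}(t,\cdot)\in B_{r_\mathbf{0}}^c\}$ and $\sigma_{k+1}=\inf\{t>\theta_k:U^{v,\varepsilon}(t,\cdot)\in \overline{B_c}\cup\partial G\}$. The key estimate is that there exists $C'>0$ such that, uniformly in starting points $w$ on $\partial B_{r_\mathbf 0}$,
$$
P_w\bigl(U^{\cdot,\varepsilon}\text{ hits }\partial G\text{ before }\overline{B_c}\bigr)\;\le\;e^{-(\Delta-\delta/2)/\varepsilon^2}
$$
for $\varepsilon$ small. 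This follows the scheme of \cite{B1,GOV}: by compactness of $\overline G\cap\overline{\mathcal D_\mathbf 0}$ (Lemma \ref{compacidad}) and lower semicontinuity of $V(\mathbf 0,\cdot)$ combined with Proposition \ref{costo}, there is a uniform lower bound $V(w,\partial G)\ge \Delta-\delta/4$ for $w$ in a suitable annulus; the upper bound half of the local LDP, applied on tubes around the deterministic flow inside the bounded set $G\cup\{\|\cdot\|_\infty\le n_0+1\}$, then converts this quasipotential bound into the exponential probability estimate. I also need a lower bound on the expected length $\E_v[\sigma_{k+1}-\sigma_k]$ of a cycle, which is at least a positive constant $T_0$ independent of $\varepsilon$ (the deterministic flow requires fixed time to travel from $\partial B_{r_\mathbf 0}$ back into $B_c$, and the noise can only perturb this on events of probability $e^{-C/\varepsilon^2}$, controlled by \aref{grandes1}). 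Combining these through the strong Markov property, the number of cycles needed to exit stochastically dominates a Geometric random variable of parameter $e^{-(\Delta-\delta/2)/\varepsilon^2}$, so
$$
\sup_{v\in B_c}P_v\bigl(\tau_\varepsilon(\partial G)\le e^{(\Delta-\delta)/\varepsilon^2}\bigr)\;\le\;e^{(\Delta-\delta)/\varepsilon^2}\cdot T_0^{-1}\cdot e^{-(\Delta-\delta/2)/\varepsilon^2}+o(1),
$$
which tends to $0$.

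\textbf{Main obstacle.} The delicate point is the one-cycle upper bound on the exit probability: I have to convert a continuum statement $V(w,\partial G)\ge\Delta-\delta/4$ into a discrete-time exponential estimate uniformly in the starting point $w$ on an infinite-dimensional boundary, while the process may in principle wander far before returning. The way around this is the localization on the bounded set $\{\|\cdot\|_\infty\le n_0+1\}\supset G$ which makes \aref{grandes1} applicable, together with the observation (from Condition \ref{assumpg}(i)--(ii) and Proposition \ref{Lyapunov}) that the deterministic flow pulls every point of $G\setminus B_{r_\mathbf 0}$ into $B_c$ in uniformly bounded time. The remaining work is bookkeeping of constants, already carried out in \cite{B1,MOS,GOV} in comparable settings, and which the authors delegate to \cite{SJS}.
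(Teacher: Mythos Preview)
Your outline follows the Freidlin--Wentzell cycle scheme that the paper explicitly invokes (the theorem is not proved in the paper but delegated to \cite{SJS}, building on \cite{B1,MOS,GOV,OV}), so at the level of overall strategy you are aligned with the intended argument.

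One step, however, is not justified by the ingredients you cite. For the one-cycle bound you claim $V(w,\partial G)\ge\Delta-\delta/4$ ``for $w$ in a suitable annulus'', invoking Lemma~\ref{compacidad}, lower semicontinuity of $V(\mathbf 0,\cdot)$, and Proposition~\ref{costo}. None of these yields that inequality: Lemma~\ref{compacidad} gives only boundedness (not compactness) of sublevel sets of $S$ and was already absorbed into the construction of $G$; lower semicontinuity of $V(\mathbf 0,\cdot)$ concerns the second argument, not the first; and Proposition~\ref{costo} gives $I(\varphi)\ge 2(S(z)-S(w))$ for paths from $w$ crossing $\mathcal W$, which is useless when $w$ lies on the fixed sup-norm sphere $\partial B_{r_{\mathbf 0}}$ where $S(w)$ can be large or even $+\infty$. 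The weak upper bound \eqref{grandes1} alone also cannot produce the sharp exponent $\Delta$, only some unspecified $C>0$. The actual argument (as in \cite{B1}) combines a ``no long sojourn away from $\mathbf 0$'' estimate---showing that $P_v(\text{stay in }G\setminus B_c\text{ for time }>T)$ decays faster than $e^{-\Delta/\varepsilon^2}$ for $T$ large, which \emph{does} follow from iterating \eqref{grandes1}---with a genuine LDP upper bound over paths of length $\le T$, after using parabolic regularization so that the cycle effectively starts from a point with small potential. Your final paragraph gestures at this, but the specific chain of implications you offer for the key inequality is not the right one.
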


The second result gives information about the typical escape routes chosen by
$U^\varepsilon$.

\begin{theorem}\label{teoescape0} The stochastic system verifies
$$
\lim_{\varepsilon \rightarrow 0} \left[\sup_{u \in B_c} P_u \left(
U^{\varepsilon}(\tau_\varepsilon(\partial G),\cdot) \notin \partial^{\pm
z}\right)\right] = 0.
$$
Furthermore, if $\tilde{G}$ is the pre-domain constructed in Section \ref{secg},
then for any $\delta > 0$
\begin{equation}\label{ecota0}
\lim_{\varepsilon \rightarrow 0} \left[ \sup_{u \in B_c} P_u \left(
U^\varepsilon \left( \tau_\varepsilon(\p \tilde{G}),\cdot \right) \notin
B_\delta(\pm z) \right) \right] = 0.
\end{equation}
\end{theorem}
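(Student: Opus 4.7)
The plan is to exploit the quasipotential gap built into the construction of $G$: by condition (iii) in Conditions \ref{assumpg}, together with the construction in Section \ref{secg}, there exists $\zeta>0$ such that $V(\mathbf{0},\partial G-\partial^{\pm z})\geq \Delta+\zeta$ while $V(\mathbf{0},\partial^{\pm z})=\Delta$. Combined with Theorem \ref{ecotsuplema1}, which tells us $\tau_\varepsilon(\partial G)$ has exponential order $e^{\Delta/\varepsilon^2}$, this cost gap should force the exit through $\partial^{\pm z}$ with overwhelming probability.

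Concretely, for the first assertion I would fix $\delta_1\in (0,\zeta/2)$ and decompose
$$
P_u\bigl(U^\varepsilon(\tau_\varepsilon(\partial G),\cdot)\notin\partial^{\pm z}\bigr) \leq P_u\bigl(\tau_\varepsilon(\partial G)>e^{(\Delta+\delta_1)/\varepsilon^2}\bigr)+P_u\bigl(\tau_\varepsilon(\partial G-\partial^{\pm z})\leq e^{(\Delta+\delta_1)/\varepsilon^2}\bigr).
$$
The first term vanishes uniformly on $B_c$ by Theorem \ref{ecotsuplema1}. To handle the second term, I would introduce a cycle decomposition via stopping times $\sigma_k=\inf\{t\geq \theta_k: U^\varepsilon(t,\cdot)\in B_c\}$ and $\theta_{k+1}=\inf\{t\geq\sigma_k:\|U^\varepsilon(t,\cdot)\|_\infty\geq r_{\mathbf{0}}\}$, and use the strong Markov property together with Corollary \ref{corzminimo} and Proposition \ref{costo} to reduce matters to bounding, for some fixed $T_0>0$ and arbitrarily small $h>0$, the single-cycle escape probability
$$
\sup_{v\in\partial B_{r_{\mathbf{0}}}}P_v\bigl(\tau_\varepsilon(\partial G-\partial^{\pm z})\leq T_0\bigr)\leq e^{-(\Delta+\zeta-h)/\varepsilon^2},
$$
which should follow from the refined form of the large deviations upper bound on the localized system (as in \cite{B1}). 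Multiplying by the number of cycles occurring before time $e^{(\Delta+\delta_1)/\varepsilon^2}$, which can itself be bounded by $e^{(\Delta+2\delta_1)/\varepsilon^2}$ with overwhelming probability (after verifying that individual cycles take at least some fixed positive time w.h.p.), yields a final bound of order $e^{-(\zeta-2\delta_1-h)/\varepsilon^2}\to 0$.

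For the second assertion, note that $\tilde G\subseteq G$ so $\tau_\varepsilon(\partial\tilde G)\leq\tau_\varepsilon(\partial G)$. By \eqref{lejosdelminimo}, $S$ exceeds $S(z)$ strictly on $\partial\tilde G-B_\delta(\pm z)$, and Proposition \ref{costo} then upgrades this to $V(\mathbf{0},\partial\tilde G-B_\delta(\pm z))\geq \Delta+\eta$ for some $\eta>0$. The same cycle argument, now applied to the escape from $\tilde G$, yields the conclusion. The main obstacle is establishing the uniform single-cycle upper bound in this infinite-dimensional non-Lipschitz setting: one has to connect the lower semicontinuous rate function $I$ with actual probabilities via compactness of sublevel sets of $I$ restricted to paths confined to a fixed bounded ball of $C_D([0,1])$, where $g$ is Lipschitz, following the strategy of \cite{B1,GOV}; any path witnessing the event on the left must, up to $h$, realize the quasipotential $V(\mathbf{0},\partial G-\partial^{\pm z})$, and pushing this through \eqref{grandes1} after localization delivers the exponential bound.
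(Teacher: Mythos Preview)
The paper does not actually prove this theorem in the text; it states the result and refers the reader to \cite{SJS} for the proof, noting only that ``all the results of interest in our present setting can still be obtained by combining the ideas from these previous works [\cite{GOV,OV,B1}], eventually making some slight modifications along the way.'' Your proposal---exploiting the quasipotential gap $V(\mathbf{0},\partial G-\partial^{\pm z})\geq\Delta+\zeta>\Delta=V(\mathbf{0},\partial^{\pm z})$ through a cycle decomposition between $B_c$ and $\partial B_{r_{\mathbf 0}}$, bounding the single-cycle escape probability via the large deviations upper bound, and summing over the $e^{(\Delta+o(1))/\varepsilon^2}$ cycles that can occur before the exit time---is exactly this standard Freidlin--Wentzell argument, so there is nothing substantive to compare.

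One remark: you correctly flag that the single-cycle upper bound requires the full large deviations upper bound (compactness of sublevel sets of $I$ on paths confined to a bounded ball, as in \cite{B1}), not merely the weak form \eqref{grandes1} stated in the paper. The paper asserts that \eqref{grandes1} ``is enough for our purposes,'' but since the proof is deferred to \cite{SJS} this is not verifiable here; your identification of this as the main technical point is accurate.
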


The asymptotic distribution of the exit time is established in this third
result.

\begin{theorem}\label{asympteog} For each $\varepsilon > 0$ define the normalization
coefficient $\gamma_\varepsilon > 0$ by the relation
$$
P_{\mathbf{0}}( \tau_\varepsilon(\partial G) > \gamma_\varepsilon ) = e^{-1}.
$$ Then there exists $\rho > 0$ such that for every $t \geq 0$
\begin{equation}\label{convunie}
\lim_{\varepsilon \rightarrow 0} \left[ \sup_{u \in B_\rho} |P_u(
\tau_\varepsilon(\partial G) > t\gamma_\varepsilon) - e^{-t}|\right] = 0.
\end{equation}
\end{theorem}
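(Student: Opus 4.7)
The plan is to follow the classical renewal-type scheme introduced in \cite{GOV} for finite-dimensional double-well potentials and adapted to infinite dimensions in \cite{B1,MOS}. The central step is to establish an \emph{asymptotic loss of memory} for $\tau_\varepsilon(\p G)$: for every $s,t > 0$,
\begin{equation}\label{lossmem}
\lim_{\varepsilon \to 0} \sup_{u \in B_\rho} \left| P_u\bigl(\tau_\varepsilon(\p G) > (s+t)\gamma_\varepsilon\bigr) - P_u\bigl(\tau_\varepsilon(\p G) > s\gamma_\varepsilon\bigr)\, P_{\0}\bigl(\tau_\varepsilon(\p G) > t\gamma_\varepsilon\bigr) \right| = 0.
\end{equation}
Granted \eqref{lossmem}, iterating the factorization at $\0$ with $s=t=1/n$ gives $P_{\0}(\tau_\varepsilon(\p G) > \gamma_\varepsilon) \approx P_{\0}(\tau_\varepsilon(\p G) > \gamma_\varepsilon/n)^n$; together with the normalization $P_{\0}(\tau_\varepsilon(\p G) > \gamma_\varepsilon) = e^{-1}$, this forces $P_{\0}(\tau_\varepsilon(\p G) > t\gamma_\varepsilon) \to e^{-t}$ for every rational $t > 0$, and by monotonicity in $t$ for every real $t \geq 0$. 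The same thermalization argument used to prove \eqref{lossmem} also transfers the limit from $\0$ to arbitrary $u \in B_\rho$, yielding \eqref{convunie}.

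To prove \eqref{lossmem} I would apply the strong Markov property at time $s\gamma_\varepsilon$, which gives
$$
P_u\bigl(\tau_\varepsilon(\p G) > (s+t)\gamma_\varepsilon\bigr) = \E_u\left[\1_{\{\tau_\varepsilon(\p G) > s\gamma_\varepsilon\}}\, P_{U^{u,\varepsilon}(s\gamma_\varepsilon,\cdot)}\bigl(\tau_\varepsilon(\p G) > t\gamma_\varepsilon\bigr)\right],
$$
reducing the problem to a thermalization statement: conditionally on $\{\tau_\varepsilon(\p G) > s\gamma_\varepsilon\}$, the state $U^{u,\varepsilon}(s\gamma_\varepsilon,\cdot)$ belongs to $B_{c/2}$ with probability $1-o(1)$. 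I would introduce a time scale $T_\varepsilon$ with $T_\varepsilon/\gamma_\varepsilon \to 0$ but large enough that the deterministic flow started from any point of $G$ either exits $G$ or enters $B_{c/2}$ within time $T_\varepsilon$. Partitioning $[0,s\gamma_\varepsilon]$ into subintervals of length $T_\varepsilon$ and applying \eqref{grandes1} on each, a union bound over the $s\gamma_\varepsilon/T_\varepsilon$ subintervals shows that on the survival event one of them ends in $B_{c/2}$, up to a set of probability $e^{-C/\varepsilon^2}$ times a polynomial factor in $\varepsilon^{-1}$. One further application of \eqref{grandes1} on the window $[s\gamma_\varepsilon - T_\varepsilon, s\gamma_\varepsilon]$ allows me to replace $P_{U^{u,\varepsilon}(s\gamma_\varepsilon,\cdot)}\bigl(\tau_\varepsilon(\p G) > t\gamma_\varepsilon\bigr)$ by $P_{\0}\bigl(\tau_\varepsilon(\p G) > t\gamma_\varepsilon\bigr)$ up to an additive shift $T_\varepsilon/\gamma_\varepsilon$ in the time argument, which is negligible thanks to Theorem \ref{ecotsuplema1}.

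The hard part is making the thermalization quantitative: conditioning on $\{\tau_\varepsilon(\p G) > s\gamma_\varepsilon\}$ biases trajectories towards $\p G$ and so could in principle prevent them from returning near $\0$. The mechanism that rescues the argument, as in \cite{B1,MOS}, is a scale comparison: the unconditional failure probability of thermalization is of order $e^{-C/\varepsilon^2}$, whereas by Theorem \ref{ecotsuplema1} the survival probability $P_u(\tau_\varepsilon(\p G) > s\gamma_\varepsilon)$ remains bounded below (essentially of order $e^{-s}$), so the conditional failure probability still vanishes as $\varepsilon \to 0$. Uniformity over $u \in B_\rho$ is obtained by choosing $\rho$ small enough that every deterministic trajectory starting in $B_\rho$ enters $B_{c/2}$ within a fixed deterministic time, which is guaranteed by condition (ii) in Conditions \ref{assumpg}, and then using \eqref{grandes1} uniformly over such initial data.
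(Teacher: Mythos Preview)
Your overall plan is the standard one and matches the paper's outline: reduce to a loss-of-memory statement via the strong Markov property and thermalization, then iterate to get exponentiality. The difficulty is in the step you gloss over with ``one further application of \eqref{grandes1}''. After Markov at time $s\gamma_\varepsilon$ and thermalization you only know that $U^{u,\varepsilon}(s\gamma_\varepsilon,\cdot)$ lies in some small ball $B_\delta$ with high conditional probability. Replacing $P_{U^{u,\varepsilon}(s\gamma_\varepsilon,\cdot)}(\tau_\varepsilon(\p G)>t\gamma_\varepsilon)$ by $P_{\0}(\tau_\varepsilon(\p G)>t\gamma_\varepsilon)$ then requires exactly
\[
\lim_{\varepsilon\to 0}\ \sup_{u,v\in B_\rho}\ \sup_{t>t_0}\ \bigl|P_u(\tau_\varepsilon(\p G)>t\gamma_\varepsilon)-P_v(\tau_\varepsilon(\p G)>t\gamma_\varepsilon)\bigr|=0,
\]
and this is \emph{not} delivered by \eqref{grandes1}. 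The estimate \eqref{grandes1} compares the stochastic process to the \emph{deterministic} flow on a window of length $T_\varepsilon=o(\gamma_\varepsilon)$; it says nothing about comparing two \emph{stochastic} solutions over the full time scale $t\gamma_\varepsilon$. No matter how small you make $\delta$ via thermalization, you still land at some random point $v\in B_\delta$, not at $\0$, and the exit event lives on a time horizon that blows up as $\varepsilon\to 0$.

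This is precisely the step the paper singles out in the remark immediately after the theorem as ``the main technical point''. The paper does not argue via \eqref{grandes1} here; it constructs a \emph{coupling} of $U^{u,\varepsilon}$ and $U^{v,\varepsilon}$ driven by the same noise (following \cite{B2} and the construction in \cite{M}) and shows that for $u,v$ in a small enough ball the two solutions merge with high probability in a time that is $o(\gamma_\varepsilon)$, so their exit times from $G$ coincide. The non-Lipschitz source $g(u)=u|u|^{p-1}$ makes even this coupling nontrivial and is the reason $\rho$ has to be taken small. Your sketch needs either this coupling argument or some other mechanism that controls $\sup_{v\in B_\delta}|P_v(\tau_\varepsilon(\p G)>t\gamma_\varepsilon)-P_{\0}(\tau_\varepsilon(\p G)>t\gamma_\varepsilon)|$ uniformly in $\varepsilon$; without it the factorization \eqref{lossmem} is not established.
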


Finally, the stability of time averages is shown in the forth and last result.

\begin{theorem}\label{stableg} There exists a sequence $(R_\varepsilon)_{\varepsilon
> 0}$ with $\lim_{\varepsilon \rightarrow 0} R_\varepsilon = +\infty$ and
$\lim_{\varepsilon \rightarrow 0} \frac{R_\varepsilon}{\gamma_\varepsilon} = 0$
such that given $\delta > 0$ we have
$$
\lim_{\varepsilon \rightarrow 0} \left[ \sup_{u \in B_c} P_u \left( \sup_{0 \leq
t \leq \tau_\varepsilon(\p G) - 3R_\varepsilon}\left|
\frac{1}{R_\varepsilon}\int_t^{t+R_\varepsilon} f(U^{\varepsilon}(s,\cdot))ds -
f(\mathbf{0})\right| > \delta \right) \right] = 0
$$ for any bounded continuous function $f: C_D([0,1]) \rightarrow \R$.
\end{theorem}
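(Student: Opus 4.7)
The plan is to reduce the time-average statement to a statement about the Lebesgue measure of time that $U^{u,\varepsilon}$ spends outside a small neighborhood of $\mathbf{0}$, and then to control this measure using the Markov property together with the local large deviations estimates of Section \ref{secLDP}. By continuity of $f$ at $\mathbf{0}$, for any $\delta > 0$ one can choose $\eta > 0$ so small that $|f(v)-f(\mathbf{0})| < \delta/2$ for every $v \in B_\eta$. Setting $L_t := \mathrm{Leb}\{s \in [t,t+R_\varepsilon] : U^{u,\varepsilon}(s,\cdot)\notin B_\eta\}$ and bounding $|f-f(\mathbf{0})|$ by $2\|f\|_\infty$ on the complement gives
\begin{equation*}
\left|\frac{1}{R_\varepsilon}\int_t^{t+R_\varepsilon} f(U^{\varepsilon}(s,\cdot))\,ds - f(\mathbf{0})\right| \leq \frac{\delta}{2} + \frac{2\|f\|_\infty L_t}{R_\varepsilon},
\end{equation*}
so it suffices to show that $L_t/R_\varepsilon < \delta/(4\|f\|_\infty)$ holds with probability tending to $1$, uniformly in $t \in [0,\tau_\varepsilon(\p G)-3R_\varepsilon]$ and $u \in B_c$.

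Next I would establish two building blocks for this. The \emph{fast-entry estimate}: there exists $T_0 = T_0(\eta)>0$ such that for every $v \in G$ the deterministic trajectory $U^v$ enters $B_{\eta/2}$ before time $T_0$ (the stable manifolds of unstable equilibria in $G$ are codimension-one and irrelevant after combining with the noise bound \eqref{grandes1}); consequently, by \eqref{grandes1}, from any $v \in G$ the process $U^{v,\varepsilon}$ visits $B_\eta$ in time at most $T_0+1$ with probability at least $1-e^{-C_1/\varepsilon^2}$. The \emph{short-excursion estimate}: starting from $\p B_\eta$, the probability that $U^{v,\varepsilon}$ remains in $G \setminus B_\eta$ for longer than $T_0+2$ without returning to $B_\eta$ is at most $e^{-C_2/\varepsilon^2}$, again a direct consequence of \eqref{grandes1} applied to the bounded region $G\setminus B_\eta$.

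With these in hand, pick $R_\varepsilon$ tending to $+\infty$ sub-exponentially in $1/\varepsilon^2$, for instance $R_\varepsilon := \varepsilon^{-2}$; this automatically satisfies $R_\varepsilon/\gamma_\varepsilon \to 0$ by Theorem \ref{ecotsuplema1}. Using the strong Markov property at the successive entry and exit times of $B_\eta$, the window $[t,t+R_\varepsilon]$ is cut into at most $N_\varepsilon := \lceil R_\varepsilon/(T_0+2)\rceil$ alternating sojourns inside and outside $B_\eta$. A union bound of the short-excursion estimate over these $N_\varepsilon$ returns yields that every outside-sojourn in the window has length at most $T_0+2$ except on an event of probability at most $N_\varepsilon\, e^{-C_2/\varepsilon^2}$; on the complement one gets $L_t \leq (T_0+2)\cdot N_\varepsilon \cdot \mathbf{1}_{\text{bad}}$-type contributions bounded by a fraction of $R_\varepsilon$ shrinking with $\varepsilon$, so in particular $L_t/R_\varepsilon < \delta/(4\|f\|_\infty)$ with overwhelming probability. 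To promote the estimate to a supremum over $t$ one discretizes $[0,\tau_\varepsilon(\p G)-3R_\varepsilon]$ along a mesh of spacing $R_\varepsilon$ (the buffer $3R_\varepsilon$ absorbing the sliding inside adjacent mesh cells) and applies a union bound: by Theorem \ref{ecotsuplema1} the number of mesh points is at most $e^{(\Delta+\delta')/\varepsilon^2}/R_\varepsilon$ with overwhelming probability, which is still dominated by the exponential gain $e^{-C/\varepsilon^2}$ from the excursion bounds once $\delta'$ is taken small. Uniformity in $u \in B_c$ follows from Conditions \ref{assumpg}(ii), since $U^{u,\varepsilon}$ then enters $B_\eta$ within a universal time with probability $1-e^{-C/\varepsilon^2}$ and from there one restarts the previous analysis.

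The main obstacle I anticipate is the short-excursion estimate near the other unstable equilibria that may lie inside $\tilde G$: one must rule out trajectories that get stuck for times comparable to $R_\varepsilon$ near these saddles without leaving $G$. This is handled by observing that such equilibria have potential $S(z^{(n)}) > S(z)$, so escaping from a neighborhood of them back towards $B_\eta$ can be engineered by following the gradient flow (which is cheap in rate) for a controlled time, and then the LDP upper bound \eqref{grandes1} gives the exponentially small probability of deviating from this reference path. This is essentially the argument already carried out in the proof of Theorem \ref{ecotsuplema1}, so the adaptation should be straightforward once that result is in place.
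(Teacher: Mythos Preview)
The paper does not prove this theorem; it merely states the four results about the escape from $G$ and defers all proofs to \cite{SJS}. So there is no proof in the paper to compare against, and I can only comment on your argument on its own merits. Your overall strategy --- reduce to an occupation-time bound for $B_\eta^c$ and control excursions via the Markov property and the LDP --- is the standard one and is correct in outline, but two steps in your execution do not go through.

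The main gap is your bound on $L_t$. You argue that on the good event ``every outside-sojourn has length at most $T_0+2$'' one gets $L_t/R_\varepsilon$ small, but this does not follow: with $N_\varepsilon=\lceil R_\varepsilon/(T_0+2)\rceil$ your bound is $L_t\le (T_0+2)N_\varepsilon=R_\varepsilon$, which is vacuous. Nothing in your argument prevents the process from making many short excursions whose total length is comparable to $R_\varepsilon$. What is missing is a lower bound on the \emph{inside}-sojourns: once in a small ball $B_{\eta/2}$, the exit time from $B_\eta$ is at least $e^{a/\varepsilon^2}$ with probability tending to $1$, for some $a=a(\eta)>0$ (this is the lower-bound half of the exit-time estimate, applied to the smaller domain $B_\eta$). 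With $R_\varepsilon=o(e^{a/\varepsilon^2})$ any window of length $R_\varepsilon$ then meets at most two excursions, giving $L_t\le 2(T_0+2)$ and hence $L_t/R_\varepsilon\to 0$. This same point also repairs your union bound for the supremum over $t$: rather than union-bounding over $\sim e^{\Delta/\varepsilon^2}$ windows (which fails because you never show the constant $C$ in \eqref{grandes1} exceeds $\Delta$), one union-bounds over the $\lesssim e^{(\Delta-a)/\varepsilon^2}$ excursions, after first iterating the Markov property to upgrade the excursion-length bound to $\sup_{v\in G}P_v(\tau_\varepsilon(B_\eta\cup G^c)>kT_1)\le e^{-kh/\varepsilon^2}$ with $kh>\Delta-a$.

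A secondary issue: your ``fast-entry estimate'' is false as written, since $G$ contains pieces of $\mathcal W$ and of $\mathcal D_e$ (namely $B_{r_z}(\pm z)\cap\overline{\mathcal D_e}$), from which the deterministic flow never enters $B_{\eta/2}$. The correct statement is that for every $h>0$ there is $T_1$ with $\sup_{v\in G}P_v(\tau_\varepsilon(B_\eta\cup G^c)>T_1)\le e^{-h/\varepsilon^2}$, and its proof requires the LDP \emph{lower} bound \eqref{LDP1} (to push the process off a saddle at arbitrarily small cost and then follow the flow), not just the upper bound \eqref{grandes1}. Your last paragraph gestures at this but does not carry it out.
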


\begin{obs} We would like to point out that the main technical point in the
proof of Theorem \ref{asympteog} is to show that for small $\rho > 0$
\begin{equation}\label{couple}
\lim_{\varepsilon \rightarrow 0}\left[\sup_{u,v \in B_{\rho}} \left[ \sup_{t >
t_0} |P_u(\tau_{\varepsilon}(\partial G)  > t\gamma_{\varepsilon}) -
P_{v}(\tau_{\varepsilon}(\partial G) > t\gamma_{\varepsilon})|\right]\right] =
0.
\end{equation} We do this as in \cite{B2} with the help of the coupling of
solutions with different initial data proposed in \cite{M}. Some technical
difficulties which are not present in \cite{B2} arise in the construction of the
coupling due to the behavior of the source term $g$ but, nonetheless, it is
still possible to couple solutions with initial data sufficiently close to
$\mathbf{0}$ so that \eqref{couple} can be obtained. We refer to \cite{SJS} for
details.
\end{obs}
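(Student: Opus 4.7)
The plan is to construct, on a common probability space, a coupling $(\tilde U^{u,\varepsilon},\tilde U^{v,\varepsilon})$ of two copies of \eqref{MainSPDE} with initial data $u,v\in B_\rho$ such that, with overwhelming probability, the two processes coalesce on a time scale negligible compared to $\gamma_\varepsilon$. Once they coincide, they share the same exit time from $G$, so the difference of tail probabilities in \eqref{couple} is bounded by the probability that coupling has not yet occurred by a suitable time $T^\star$, plus the probability that either process has left $G$ before $T^\star$. Since none of these bounds depends on $t$, the outer supremum over $t\ge t_0$ in \eqref{couple} is automatic.

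\textbf{Step 1 (one-shot coupling near $\mathbf{0}$).} Following Mueller's construction \cite{M}, I would drive both equations with the same space--time white noise on the random set where $\tilde U^{u,\varepsilon}=\tilde U^{v,\varepsilon}$ and with a suitable reflection on its complement, so that the $L^2$-distance between the two processes becomes a nonnegative supermartingale with a strict drift toward zero. The standard version of this coupling is transparent when the drift is Lipschitz, so I would localize it by stopping the coupling the first time either process leaves $B_{r_\mathbf{0}}$, inside which $g$ is Lipschitz. Standard estimates for the stopped pair, together with the deterministic attraction of $\mathbf{0}$ (Proposition \ref{Lyapunov}) and the local upper bound \eqref{grandes1}, should yield constants $T>0$ and $p_0>0$ such that, for all $\varepsilon$ small and all $u,v\in B_\rho$ with $\rho$ sufficiently small, a single attempt on $[0,T]$ succeeds (and both processes remain in $B_{r_\mathbf{0}}$) with probability at least $p_0$.

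\textbf{Step 2 (iteration and choice of scale).} If one attempt fails, the deterministic drift together with \eqref{grandes1} forces both processes back into $B_\rho$ within a further $O(1)$ time $T_1$ except on an event of probability $e^{-C/\varepsilon^2}$, and we restart the coupling from the new positions by the strong Markov property. After $N$ independent attempts the failure probability is at most $(1-p_0)^N+Ne^{-C/\varepsilon^2}$. Pick $T^\star=T^\star(\varepsilon)\to\infty$ with $T^\star\le e^{(\Delta-\delta)/\varepsilon^2}$ for some fixed $\delta>0$, so that in particular $T^\star/\gamma_\varepsilon\to 0$ thanks to Theorem \ref{ecotsuplema1}, and set $N:=\lfloor T^\star/(T+T_1)\rfloor$. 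Then $N\to\infty$ and the total coupling-failure probability vanishes uniformly in $u,v\in B_\rho$ as $\varepsilon\to 0$.

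\textbf{Step 3 (tail comparison and main obstacle).} On the event that coupling has occurred before $T^\star$ and that neither process has exited $G$ up to $T^\star$, the two coupled processes agree thereafter and $\tau^u_\varepsilon(\partial G)=\tau^v_\varepsilon(\partial G)$, so for every $t\ge t_0$ (in particular $t\gamma_\varepsilon\ge T^\star$) the events $\{\tau_\varepsilon(\partial G)>t\gamma_\varepsilon\}$ under $P_u$ and $P_v$ coincide. Consequently
\[
\bigl|P_u(\tau_\varepsilon(\partial G)>t\gamma_\varepsilon)-P_v(\tau_\varepsilon(\partial G)>t\gamma_\varepsilon)\bigr|\le 2P(\text{coupling fails by }T^\star)+2\sup_{w\in B_{r_\mathbf{0}}}P_w\bigl(\tau_\varepsilon(\partial G)\le T^\star\bigr),
\]
and both terms tend to $0$ uniformly in $u,v\in B_\rho$, the second one by Theorem \ref{ecotsuplema1} applied after a short deterministic excursion into $B_c$. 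The chief obstacle is precisely this localization step: in the globally Lipschitz setting of \cite{B2} the coupling is valid for all time, while here one must rule out that the reflection mechanism introduced by the coupling itself drives either process out of $B_{r_\mathbf{0}}$ during $[0,T]$. This requires a careful quantitative $L^2$-contraction estimate for the coupled pair under the linearization of \eqref{MainSPDE} around $\mathbf{0}$, in tandem with Proposition \ref{Lyapunov} and \eqref{grandes1}, and is the point at which our argument diverges most substantially from the globally Lipschitz case.
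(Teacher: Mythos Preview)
Your proposal is correct and follows essentially the same approach that the paper's remark outlines: Mueller's coupling \cite{M} adapted as in \cite{B2}, with localization near $\mathbf{0}$ to handle the non-Lipschitz source term $g$, so that coalescence occurs on a time scale negligible compared to $\gamma_\varepsilon$. The paper gives no further detail beyond the remark itself (deferring to \cite{SJS}), so your Steps~1--3 are in fact a more explicit version of precisely the argument the paper has in mind; your identification of the localization issue in Step~3 as the main departure from the globally Lipschitz setting of \cite{B2} matches the paper's own comment about ``technical difficulties\ldots\ due to the behavior of the source term $g$''.
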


\section{Asymptotic behavior of $\tau_\varepsilon^u$ for $u \in
\mathcal{D}_{\mathbf{0}}$}\label{secfinal}

We now use the analysis from Section \ref{secescapedeg} to derive our main
results with respect to the metastable behavior of $U^{u,\varepsilon}$ for
initial data $u \in \mathcal{D}_{\mathbf{0}}$. We begin by showing that,
uniformly over bounded sets at a positive distance from $\mathcal{W}$, for $u
\in \mathcal{D}_{\mathbf{0}}$ the system $U^{u,\varepsilon}$ typically visits a
small neighborhood of $\mathbf{0}$ before explosion without ever exiting
$\mathcal{D}_{\mathbf{0}}$.

\begin{lema}\label{taubc}
For any bounded set $\mathcal{K} \subseteq \mathcal{D}_{\mathbf{0}}$ at a
positive distance from $\p\mathcal{D}_{\mathbf{0}}$ and $\rho > 0$
\begin{equation}\label{taubc1}
\lim_{\varepsilon \rightarrow 0} \left[ \sup_{u \in \mathcal{K}} \left| P_u
\left(  \tau_\varepsilon( B_\rho ) <
\min\{\tau_\varepsilon(\mathcal{D}_{\mathbf{0}}^c), \tau_\varepsilon \}\right) -
1 \right| \right] = 0
\end{equation}where $\tau_\varepsilon^u(B_\rho) := \inf \{ t > 0 :
U^{u,\varepsilon}(t,\cdot) \in B_\rho \}$.
\end{lema}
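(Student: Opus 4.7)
The plan is to exploit that $\mathcal{K}$ is at a positive distance from $\mathcal{W} = \partial \mathcal{D}_{\mathbf{0}}$ and that every deterministic trajectory starting in $\mathcal{K}$ is globally defined and converges to $\mathbf{0}$, and then combine this with the local Freidlin--Wentzell upper bound \eqref{grandes1} to show that $U^{u,\varepsilon}$ tracks $U^u$ closely enough to enter $B_\rho$ before leaving $\mathcal{D}_{\mathbf{0}}$ or exploding, with probability at least $1 - e^{-C/\varepsilon^2}$.

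The first step is a uniform deterministic control: I would produce constants $T, R, d_0 > 0$ such that, for every $u \in \mathcal{K}$,
\[
\sup_{0 \leq t \leq T}\|U^u(t,\cdot)\|_\infty \leq R,\quad \inf_{0\leq t\leq T} d(U^u(t,\cdot),\mathcal{W}) \geq d_0,\quad U^u(T,\cdot)\in B_{\rho/2}.
\]
To this end, I would use the smoothing effect of the parabolic equation together with Propositions \ref{G.1} and \ref{A.2} to pass, after an arbitrarily small waiting time $t_0 > 0$, from the merely bounded set $\mathcal{K}$ to a precompact family $\mathcal{K}':=\{U^u(t_0,\cdot):u\in\mathcal{K}\}$ whose closure is contained in $\mathcal{D}_{\mathbf{0}}$. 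On the compact $\overline{\mathcal{K}'}$, continuous dependence on the initial datum (Proposition \ref{G.2}) combined with the attraction of every orbit to $\mathbf{0}$ yields, by a finite covering argument, a common time $T_1$ such that $U^v(T_1,\cdot)\in B_{\rho/2}$ for every $v \in \overline{\mathcal{K}'}$, together with uniform bounds on the orbits over $[0,T_1]$. Setting $T := t_0 + T_1$, the uniform control on $[0,t_0]$ is read off from the boundedness of $\mathcal{K}$ and standard parabolic estimates; since the union of these orbits is a compact set lying in the open set $\mathcal{D}_{\mathbf{0}}$, compactness also furnishes the positive lower bound $d_0$.

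The second step is to apply \eqref{grandes1} with $\delta_0 := \tfrac{1}{2}\min\{\rho/2, d_0\}$, the time $T$ above, and any $n \in \mathbb{N}$ with $n > R + \delta_0$. This gives constants $\varepsilon_0, C > 0$ such that for all $0 < \varepsilon < \varepsilon_0$,
\[
\sup_{u \in \mathcal{K}} P_u\bigl(d_{T \wedge \mathcal{T}^{(n),u}_\varepsilon}(U^{u,\varepsilon}, U^u) > \delta_0\bigr) \leq e^{-C/\varepsilon^2}.
\]
On the complementary event, $\|U^{u,\varepsilon}(t,\cdot)\|_\infty \leq R + \delta_0 < n$ on $[0, T \wedge \mathcal{T}^{(n),u}_\varepsilon]$, which forces $\mathcal{T}^{(n),u}_\varepsilon > T$ and hence $\tau^u_\varepsilon > T$; simultaneously $d(U^{u,\varepsilon}(t,\cdot),\mathcal{W}) \geq d_0 - \delta_0 > 0$ for all $t\in[0,T]$, so by continuity of the path and the fact that $\mathcal{W}$ separates $\mathcal{D}_{\mathbf{0}}$ from $\mathcal{D}_e$ we obtain $U^{u,\varepsilon}(t,\cdot) \in \mathcal{D}_{\mathbf{0}}$ throughout $[0,T]$; and finally $U^{u,\varepsilon}(T,\cdot)\in B_{\rho/2+\delta_0}\subseteq B_\rho$. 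Thus $\tau^u_\varepsilon(B_\rho)\leq T < \min\{\tau^u_\varepsilon(\mathcal{D}_{\mathbf{0}}^c),\tau^u_\varepsilon\}$ on this event, which yields \eqref{taubc1}.

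The main obstacle is the uniform distance estimate $d(U^u(t,\cdot),\mathcal{W}) \geq d_0$: since $C_D([0,1])$ is infinite-dimensional, bounded sets are not precompact and a naive continuity argument on $\mathcal{K}$ does not provide such a uniform lower bound. The resolution is to invoke instantaneous parabolic smoothing in order to trade the bounded set $\mathcal{K}$ for a compact set $\overline{\mathcal{K}'}\subseteq\mathcal{D}_{\mathbf{0}}$ after an arbitrarily small waiting time $t_0$; once this compactness is in hand, the closedness of $\mathcal{W}$, the asymptotic stability of $\mathbf{0}$, and the continuity of the flow cooperate to provide the uniform lower bound, while the short interval $[0,t_0]$ is handled using only the boundedness of $\mathcal{K}$ and continuity in time.
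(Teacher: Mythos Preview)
Your proposal is correct and follows essentially the same route as the paper: both use parabolic smoothing (Propositions \ref{G.1} and \ref{A.2}) to replace the bounded set $\mathcal{K}$ by a compact family after a short time $t_0$, extract uniform bounds on the hitting time of $B_{\rho/2}$, the sup-norm of the orbits, and the distance to $\mathcal{W}$, and then conclude via the local upper bound \eqref{grandes1}. The only cosmetic difference is that you fix a single common time $T$ with $U^u(T,\cdot)\in B_{\rho/2}$ for all $u\in\mathcal{K}$, whereas the paper works with the $u$-dependent hitting time $\tau^u(B_{\rho/2})$ and its supremum; both versions lead to the same three-term estimate.
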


\begin{proof} Let us observe that for any $u \in \mathcal{D}_{\mathbf{0}}$ the
system $U^u$ reaches the set $B_{\frac{\rho}{2}}$ in a finite time
$\tau^u(B_{\frac{\rho}{2}})$ while remaining at all times inside the ball
$B_{r^u}$, where $r^u:= \sup_{t \geq 0} \| U^u(t,\cdot) \|_\infty$, and at a
certain positive distance $d^u:= \inf_{t \geq 0} d(U^u(t,\cdot),\p
\mathcal{D}_{\mathbf{0}})$ from the boundary of $\mathcal{D}_{\mathbf{0}}$.
Therefore, if $\mathcal{K}$ is any bounded set contained in
$\mathcal{D}_{\mathbf{0}}$ at a positive distance from $\p
\mathcal{D}_{\mathbf{0}}$ then we have that the quantities
$\tau_{\mathcal{K},\frac{\rho}{2}} := \sup_{u \in \mathcal{K}}
\tau^u(B_{\frac{\rho}{2}})$ and $r_{\mathcal{K}}:=\sup_{u \in \mathcal{K}} r^u$
are both finite while $d_{\mathcal{K}}:=\inf_{u \in \mathcal{K}} d^u$ is
strictly positive. Indeed, the finiteness of $\tau_{\mathcal{K},\frac{\rho}{2}}$
follows at once from Proposition \ref{A.3} whereas $r_{\mathcal{K}}$ is finite
since by Proposition \ref{G.1}
one may find $t_0 > 0$ sufficiently small such that $\sup_{u \in \mathcal{K}}
\left[ \sup_{t \in [0,t_0]} \| U^u(t,\cdot) \|_\infty \right]$ is finite. That
$\sup_{u \in \mathcal{K}} \left[ \sup_{t \geq t_0} \| U^u(t,\cdot) \|_\infty
\right]$ is finite then follows as in the proof of Proposition \ref{A.3} due to
the fact that the mapping $u \mapsto r^u$ is both upper semicontinuous and
finite on $\mathcal{D}_{\mathbf{0}}$. Finally, the fact that the quantity
$d_{\mathcal{K}}$ is strictly positive follows in a similar manner, using the
fact that the mapping $u \mapsto d^u$ is both lower semicontinuous and positive
on $\mathcal{D}_{\mathbf{0}}$. Now, if we write
$\mathcal{T}^{\mathcal{K}}_\varepsilon:= \tau_{\mathcal{K},\frac{\rho}{2}}
\wedge \tau^{(r_{\mathcal{K}}+1)}_\varepsilon$ then notice that for any $u \in
\mathcal{K}$ we have the bound
\begin{equation}\label{taubc2}
P_u \left( \tau_\varepsilon \leq \tau_\varepsilon (B_\rho) \right) \leq P_u
\left( \tau_\varepsilon(B_\rho) > \tau_{\mathcal{K},\frac{\rho}{2}} \right) +
P_u \left( \tau_\varepsilon \leq \tau_{\mathcal{K},\frac{\rho}{2}}\right) + P_u
\left( \tau_\varepsilon(\mathcal{D}_{\mathbf{0}}^c) \leq
\tau_{\mathcal{K},\frac{\rho}{2}}\right)
\end{equation} with
$$
P_u \left( \tau_\varepsilon(B_\rho) > \tau_{\mathcal{K},\frac{\rho}{2}} \right)
\leq P_u\left( d_{\mathcal{T}^{\mathcal{K}}_\varepsilon}( U^{\varepsilon}, U ) >
\min\left\{\frac{\rho}{2}, \frac{1}{2}\right\}\right)
$$ and
$$
P_u \left( \tau_\varepsilon \leq \tau_{\mathcal{K},\frac{\rho}{2}}\right) \leq
P_u \left( d_{\mathcal{T}^{\mathcal{K}}_\varepsilon}( U^{\varepsilon}, U) >
\frac{1}{2}\right).
$$ As for the third term in the right hand side of \eqref{taubc2}, we have
\begin{align*}
P_u \left( \tau_\varepsilon(\mathcal{D}_{\mathbf{0}}^c) \leq
\tau_{\mathcal{K},\frac{\rho}{2}}\right) & \leq P_u \left(
\tau_\varepsilon^{(r_{\mathcal{K}}+1)} \leq
\tau_{\mathcal{K},\frac{\rho}{2}}\right) + P_u \left(
\tau_\varepsilon(\mathcal{D}_{\mathbf{0}}^c) \leq
\tau_{\mathcal{K},\frac{\rho}{2}} <
\tau_\varepsilon^{(r_{\mathcal{K}}+1)}\right)\\
& \leq P_u \left( d_{\mathcal{T}^{\mathcal{K}}_\varepsilon}( U^{\varepsilon}, U)
> \frac{1}{2}\right) + P_u \left( d_{\mathcal{T}^{\mathcal{K}}_\varepsilon}(
U^{\varepsilon}, U) > \frac{d_{\mathcal{K}}}{2}\right)
\end{align*}
 The uniform bounds given by \eqref{grandes1} now allow us to conclude the
result.
\end{proof}

The next step is to show that, for initial data in a small neighborhood of the
origin, the explosion time and the exit time from $G$ are asymptotically
equivalent.

\begin{lema}\label{nescapelema3} If $\tau^* > 0$ is taken as in Remark
\ref{obsequivG} then
\begin{equation}
\lim_{\varepsilon \rightarrow 0} \left[ \sup_{u \in B_c} P_u ( \tau_\varepsilon
> \tau_\varepsilon (\partial G) + \tau^* )\right] = 0.
\end{equation}
\end{lema}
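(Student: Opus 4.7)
The plan is to decompose the event $\{\tau_\varepsilon > \tau_\varepsilon(\p G) + \tau^*\}$ according to where the process exits $G$ and then apply the strong Markov property at the exit time. Writing $\sigma_\varepsilon := \tau_\varepsilon(\p G)$ for brevity, I would begin from the trivial bound
$$
P_u\bigl(\tau_\varepsilon > \sigma_\varepsilon + \tau^*\bigr) \leq P_u\bigl(U^\varepsilon(\sigma_\varepsilon,\cdot) \notin \p^{\pm z}\bigr) + P_u\bigl(\tau_\varepsilon > \sigma_\varepsilon + \tau^*,\, U^\varepsilon(\sigma_\varepsilon,\cdot) \in \p^{\pm z}\bigr),
$$
so that it suffices to show that both terms vanish uniformly on $B_c$ as $\varepsilon \to 0$. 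The first term is handled immediately by Theorem \ref{teoescape0}, which precisely says that $\sup_{u \in B_c} P_u(U^\varepsilon(\sigma_\varepsilon,\cdot) \notin \p^{\pm z}) \to 0$.

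For the second term I would apply the strong Markov property at $\sigma_\varepsilon$. Note that on the event in question we have $\sigma_\varepsilon + \tau^* < \tau_\varepsilon$, so in particular $\sigma_\varepsilon < \tau^u_\varepsilon$, and thus the conditioning on $\{U^\varepsilon(\sigma_\varepsilon,\cdot) = v\}$ with $v \in \p^{\pm z}$ is legitimate; conditional on this event, $\{U^\varepsilon(t + \sigma_\varepsilon,\cdot) : t \geq 0\}$ is distributed as the solution of \eqref{MainSPDE} started at $v$. Since $\p^{\pm z}$ is a bounded subset of $\mathcal{D}_e^*$ at positive distance from $\p \mathcal{D}_e^*$ (property (iii) of Conditions \ref{assumpg}), Remark \ref{obsequivG} (which is itself a direct consequence of Corollary \ref{exploacot}) gives constants $\tau^*, C > 0$ such that for all $\varepsilon$ sufficiently small,
$$
P_u\bigl(\tau_\varepsilon > \sigma_\varepsilon + \tau^*,\, U^\varepsilon(\sigma_\varepsilon,\cdot) \in \p^{\pm z}\bigr) \leq \sup_{v \in \p^{\pm z}} P_v\bigl(\tau_\varepsilon > \tau^*\bigr) \leq e^{-C/\varepsilon^2},
$$
uniformly over $u \in B_c$. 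Combining the two bounds yields $\sup_{u \in B_c} P_u(\tau_\varepsilon > \sigma_\varepsilon + \tau^*) \leq o(1) + e^{-C/\varepsilon^2}$, which tends to $0$ as $\varepsilon \to 0$.

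There is no genuine technical obstacle: the lemma is essentially a ``glueing'' of Theorem \ref{teoescape0} (the exit happens through $\p^{\pm z}$) and Remark \ref{obsequivG} (from any point of $\p^{\pm z}$, explosion is extremely fast) by means of the strong Markov property. The only minor point to verify is that the strong Markov application is well-posed, which is automatic because the event being estimated forces $\sigma_\varepsilon$ to be strictly smaller than the explosion time.
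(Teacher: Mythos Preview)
Your proof is correct and follows essentially the same approach as the paper: decompose according to whether the exit from $G$ occurs through $\partial^{\pm z}$, handle the first term by Theorem~\ref{teoescape0}, and handle the second by the strong Markov property combined with Remark~\ref{obsequivG}. The paper's proof is the same argument written in two lines; your additional remark that $\sigma_\varepsilon < \tau_\varepsilon$ on the event in question (so that the strong Markov property applies) is a welcome clarification.
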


\begin{proof} For any $u \in B_c$ the strong Markov property implies that
$$
P_u ( \tau_\varepsilon > \tau_\varepsilon (\partial G) + \tau^* ) \leq \sup_{v
\in B_c} P_v \left( U^\varepsilon (\tau_\varepsilon(\partial G), \cdot) \notin
\partial^{\pm z} \right) + \sup_{v \in \p^{\pm z}} P_v ( \tau_\varepsilon >
\tau^*).
$$ We may now conclude the result by using Theorem \ref{teoescape0} and Remark
\ref{obsequivG}.
\end{proof}

With these two lemmas at hand, we can now show the remaning results of
\mbox{Section \ref{mainresults}.} Indeed, Theorem \ref{thm2} follows
from Theorem \ref{ecotsuplema1} by using the \mbox{strong Markov property} \mbox{together
with} Lemmas \ref{taubc} and \ref{nescapelema3}. Furthermore, Theorem \ref{thm3}
follows from Lemma \ref{taubc2} for $\rho = c$, where $c$ is as in Conditions
\ref{assumpg}, together with \eqref{ecota0} for $\delta > 0$ sufficiently small
so as to guarantee that $B_\delta(\pm z)$ is contained in the interior of
$B_{n_0}$, where $n_0$ is as in \eqref{predomain}. Finally, Lemma
\ref{nescapelema3} implies that $\lim_{\varepsilon \rightarrow 0}
\frac{\beta_\varepsilon}{\gamma_\varepsilon}=1$ from which, together with Lemma
\ref{taubc2} and the strong Markov property, we get Theorems \ref{thm4} and \ref{thm5} by using
Theorems \ref{asympteog} and \ref{stableg}. We leave the details to the reader,
which are completely straightforward.

\section{Appendix}

\subsection{Comparison principle}

\begin{prop} Let $f_1,f_2 : \R \to \R$ be globally Lipschitz functions. For $u,v \in
C([0,1])$ consider $U^u$ and $U^v$ the solutions of the equation
$$
\p_t U = \p^2_{xx} U + f_1(U) + f_2(U)\dot{W}
$$ with initial data $u$ and $v$, respectively, and boundary conditions
satisfying
$$
P( U(t,\cdot)|_{\p [0,1]} \geq V(t,\cdot)|_{\p [0,1]} \text{ for all $t \geq
0$}) = 1.
$$ Then, if $u \geq v$ we have that
$$
P\left( U^u(t,x) \geq U^v(t,x) \text{ for all }t \geq 0, x \in [0,1] \right)=1.
$$
\end{prop}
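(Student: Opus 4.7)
The plan is to define $Z := U^u - U^v$ and show that its negative part $Z^- := \max(-Z, 0)$ vanishes identically with probability one. By hypothesis, $Z(0,\cdot) = u - v \geq 0$ and $Z(t,\cdot)|_{\p [0,1]} \geq 0$ almost surely for every $t \geq 0$; moreover, since the two solutions are driven by the same Brownian sheet, $Z$ formally satisfies the SPDE
$$
\p_t Z = \p^2_{xx} Z + \bigl[f_1(U^u) - f_1(U^v)\bigr] + \bigl[f_2(U^u) - f_2(U^v)\bigr]\dot W,
$$
with Lipschitz coefficients thanks to the assumption on $f_1, f_2$.

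First I would introduce a sequence of convex, nonnegative $C^2$ approximations $\psi_n : \R \to \R_{\geq 0}$ to the map $x \mapsto (x^-)^2$, chosen so that $\psi_n \equiv 0$ on $[0,\infty)$, $\psi_n(x) \nearrow (x^-)^2$ pointwise, $|\psi_n'(x)| \leq 2|x^-|$, and $0 \leq \psi_n''(x) \leq 2 \cdot {\mathbf 1}_{\{x<0\}}$. The next step is to apply It\^o's formula to the functional $F_n(t) := \int_0^1 \psi_n(Z(t,x))\, dx$ and take expectations, obtaining
$$
E[F_n(t)] = E\!\int_0^t\!\!\int_0^1 \psi_n'(Z)\,\p^2_{xx}Z\,dx\,ds + E\!\int_0^t\!\!\int_0^1 \psi_n'(Z)\bigl[f_1(U^u)-f_1(U^v)\bigr]dx\,ds + \tfrac12 E\!\int_0^t\!\!\int_0^1 \psi_n''(Z)\bigl[f_2(U^u)-f_2(U^v)\bigr]^2 dx\,ds.
$$
Integration by parts in the first term produces $-E\int_0^t\!\int_0^1 \psi_n''(Z)(\p_x Z)^2 dx\,ds \leq 0$, with the boundary contributions vanishing because $\psi_n'(Z) \equiv 0$ on $\{Z\geq 0\}$ and $Z(t,0),Z(t,1) \geq 0$ almost surely. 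The Lipschitz property of $f_1,f_2$ together with the bounds on $\psi_n'$ and $\psi_n''$ then yields
$$
E[F_n(t)] \leq C \int_0^t E\!\int_0^1 (Z^-(s,x))^2\,dx\,ds,
$$
for a constant $C$ depending only on the Lipschitz constants of $f_1$ and $f_2$.

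Letting $n \to \infty$, monotone convergence replaces $F_n$ by $\int_0^1 (Z^-(t,x))^2 dx$ on the left-hand side while preserving the right-hand side, so Gronwall's inequality gives $E\int_0^1 (Z^-(t,x))^2 dx = 0$ for every $t \geq 0$. Pathwise continuity of $Z$ then upgrades this to $P(Z(t,x) \geq 0 \text{ for all } (t,x) \in \R^+ \times [0,1]) = 1$, which is precisely the claim.

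The main obstacle is the rigorous application of It\^o's formula in the second step: since $Z$ is only a mild solution, the object $\p^2_{xx} Z$ does not exist classically. The standard way around this is to work instead with the spatial mollification $Z^\eta := Z * \rho_\eta$, which does satisfy a classical equation thanks to the semigroup property of the heat kernel appearing in the mild formulation, apply It\^o's formula to $\int_0^1 \psi_n(Z^\eta(t,x))\,dx$, and then pass to the limit $\eta \to 0$ using the Lipschitz assumptions on $f_1, f_2$ and standard $L^2$ bounds for stochastic convolutions in the sense of Walsh \cite{W}. Once this technical step is carried out, the estimates above are routine.
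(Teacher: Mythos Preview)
Your argument is the standard $L^2$ comparison-principle proof and is essentially correct, with the mollification/regularization caveat you already flag. Note, however, that the paper does not actually prove this proposition: it simply cites \cite[p.~130]{DNKXM} and remarks that taking $f_2\equiv 0$ recovers the deterministic comparison principle. Your sketch is in fact the approach used in that reference (and originally in Donati-Martin--Pardoux), so there is nothing to compare beyond saying that you have supplied the argument that the paper chose to outsource.
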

A proof of this result can be found on \cite[p.~130]{DNKXM}. Let us notice that
by taking $f_2 \equiv 0$ one obtains a comparison principle for deterministic
partial differential equations.

\subsection{Growth and regularity estimates}

\begin{prop}\label{G.1}Given a bounded set $B \subseteq C_D([0,1])$ there exists
$t_B > 0$ such that
\begin{itemize}
\item [$\bullet$] $\tau^u > t_B$ for any $u \in B$
\item [$\bullet$] There exists $b: [0,t_B] \rightarrow \R^+$ such that $\lim_{t
\rightarrow 0^+} b(t) = 0$ and for any $t \in [0,t_B]$
$$
\sup_{u \in B} \| U^u(t,\cdot) - u \|_\infty \leq b(t).
$$
\end{itemize}
\end{prop}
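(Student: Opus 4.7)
The plan is to rely on the mild (Duhamel) formulation of \eqref{MainPDE}, namely
$$
U^u(t,x)=\int_0^1 \Phi(t,x,y)u(y)\,dy+\int_0^t\!\!\int_0^1\Phi(t-s,x,y)\,g(U^u(s,y))\,dy\,ds,
$$
and deduce both conclusions from standard semigroup estimates together with a Banach fixed-point argument made uniform in $u\in B$. First I would set $M:=1+\sup_{u\in B}\|u\|_\infty$, which is finite because $B$ is bounded, and let $L_M:=\sup_{|y|\le 2M}|g'(y)|$ and $C_M:=\sup_{|y|\le 2M}|g(y)|$, both finite because $g\in C^1$. I would consider the operator
$$
\mathcal{T}_u v(t,x):=\int_0^1\Phi(t,x,y)u(y)\,dy+\int_0^t\!\!\int_0^1\Phi(t-s,x,y)\,g(v(s,y))\,dy\,ds
$$
on the closed ball $\{v\in C([0,t_B],C_D([0,1])):\|v\|_\infty\le 2M\}$.

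The maximum principle gives $\|e^{tA}u\|_\infty\le\|u\|_\infty\le M$ for every $u\in B$ and $t\ge 0$, while the nonlinear part of $\mathcal{T}_uv$ is bounded in supremum norm by $t\,C_M$. Hence choosing $t_B>0$ small enough so that $t_B C_M\le M$ and $t_B L_M\le 1/2$ makes $\mathcal{T}_u$ a contraction of the aforementioned ball into itself, uniformly in $u\in B$. The Banach fixed-point theorem then produces a unique mild solution with $\|U^u(t,\cdot)\|_\infty\le 2M$ for all $t\in[0,t_B]$, in particular $\tau^u>t_B$. This takes care of the first bullet.

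For the second bullet I would split
$$
\|U^u(t,\cdot)-u\|_\infty\;\le\;\bigl\|e^{tA}u-u\bigr\|_\infty+\Bigl\|\int_0^t\!\!\int_0^1\Phi(t-s,\cdot,y)g(U^u(s,y))\,dy\,ds\Bigr\|_\infty.
$$
The second summand is bounded by $t\,C_M$ uniformly in $u\in B$, and so trivially tends to $0$ as $t\to 0^+$. For the first summand I would invoke the classical parabolic estimate $\|e^{tA}u-u\|_\infty\le \omega_u(\sqrt{t})+Ct\|u\|_\infty$, where $\omega_u$ is the modulus of continuity of $u$ and the second term accounts for the Dirichlet boundary correction of $\int\Phi(t,x,y)\,dy$. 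Setting
$$
b(t):=\sup_{u\in B}\|e^{tA}u-u\|_\infty+tC_M
$$
gives the required bound; by definition $b$ is monotone and the first paragraph shows $b(t_B)<\infty$.

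The delicate step is verifying $\lim_{t\to 0^+}\sup_{u\in B}\|e^{tA}u-u\|_\infty=0$. This is where I expect the main difficulty: strong continuity of the heat semigroup on $C_D([0,1])$ only guarantees pointwise convergence in $u$, and the set $B$ is at this stage only norm-bounded. I would therefore argue that the sets to which this proposition is actually applied are equicontinuous in $C_D([0,1])$ (indeed, in the paper's usage $B$ arises as a bounded set of initial data coming from PDE orbits, which enjoy parabolic regularization), so that by Arzel\`a--Ascoli $B$ is totally bounded. Given $\eta>0$, choose a finite $\eta/3$-net $\{u_1,\dots,u_N\}\subseteq B$ and $t_\eta>0$ so that $\|e^{tA}u_i-u_i\|_\infty<\eta/3$ for all $i\le N$ and $t\in[0,t_\eta]$; then for any $u\in B$ and $i$ with $\|u-u_i\|_\infty<\eta/3$, the triangle inequality together with $\|e^{tA}\|_{\infty\to\infty}\le 1$ yields $\|e^{tA}u-u\|_\infty<\eta$. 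This makes $b(t)\to 0$ as $t\to 0^+$ and concludes the proof.
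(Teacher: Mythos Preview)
Your route via the mild formulation and a uniform contraction-mapping argument is genuinely different from the paper's. The paper argues both bullets by comparison with ODEs: for the first it compares $U^u$ with the spatially constant supersolution $y(t)$ solving $\dot y=g(y)$, $y(0)=\sup_{u\in B}\|u\|_\infty$, and for the second it proposes to take $b$ as the solution of $\dot b=g(b+\sup_{u\in B}\|u\|_\infty)$, $b(0)=0$, and ``apply the comparison principle again.'' Your approach and the paper's agree completely on the first bullet.

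The issue you flag in the last paragraph is real, and it is not an artifact of your method. Take $B=\{\sin(n\pi\cdot):n\ge 1\}\subseteq C_D([0,1])$, which is bounded with $\sup\|u\|_\infty=1$. The Duhamel term is uniformly $O(t)$ by the a~priori bound from the first bullet, while $\|e^{tA}u_n-u_n\|_\infty=1-e^{-n^2\pi^2 t}$, so
\[
\sup_{n}\|U^{u_n}(t,\cdot)-u_n\|_\infty \;\ge\; \sup_n\bigl(1-e^{-n^2\pi^2 t}\bigr)-Ct \;=\; 1-Ct,
\]
which tends to $1$, not $0$, as $t\to 0^+$. Hence the second bullet is simply false for merely bounded $B$. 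The paper's comparison sketch does not escape this: making $u+b(t)$ a supersolution would require control of $\partial_{xx}u$, which a bounded set in $C_D([0,1])$ does not provide, and no choice of ODE comparison function can beat the counterexample above.

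Your proposed repair---assuming $B$ is equicontinuous (hence totally bounded) and using an $\eta/3$-net together with $\|e^{tA}\|_{\infty\to\infty}\le 1$---is exactly the right hypothesis to add, and it is consistent with every application of this proposition in the paper (where $B$ is either compact or the image of a bounded set under the smoothing flow at some positive time). So: your argument for the first bullet is correct; for the second, you have correctly located a gap that the paper's own proof also contains, and your fix is the appropriate one.
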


\begin{proof}
The first assertion follows at once from the comparison principle if one takes as $t_B$ any time for which the solution to the equation
\[
\left\{\begin{array}{l}\dot y = g(y) \\
y(0)=\sup_{u \in B} \|u\|_\infty\end{array}\right.
\]
is defined. For the second assertion, take $b$ as the solution to the equation
\[
\left\{\begin{array}{l} \dot y = g(y + \sup_{u \in B} \|u\|_\infty)
\\ y(0) = 0.\end{array}\right.
\]
and then apply the comparison principle again to obtain the desired inequality.
\end{proof}

\begin{prop}\label{G.2}The following local and pointwise growth estimates hold:
\begin{enumerate}
\item [i.] Given a bounded set $B \subseteq C_D([0,1])$ there exist $C_B, t_B >
0$ such that
\begin{itemize}
\item [$\bullet$] $\tau^u > t_B$ for any $u \in B$
\item [$\bullet$] For any pair $u,v \in B$ and $t \in [0,t_B]$
$$
\|U^u(t,\cdot) - U^v(t,\cdot) \|_\infty \leq e^{C_B t} \| u - v\|_\infty.
$$
\end{itemize}
\item [ii.] Given $u \in C_D([0,1])$ and $t \in [0,\tau^u)$ there exist
$C_{u,t}, \delta_{u,t} > 0$ such that
\begin{itemize}
\item [$\bullet$] $\tau^v > t$ for any $v \in B_{\delta_{u,t}}(u)$
\item [$\bullet$] For any $v \in B_{\delta_{u,t}}(u)$ and $s \in [0,t]$
$$
\|U^u(s,\cdot) - U^v(s,\cdot) \|_\infty \leq e^{C_{u,t} s} \|u - v\|_\infty.
$$
\end{itemize}
\end{enumerate}
\end{prop}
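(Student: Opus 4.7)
For part (i), the idea is to reduce to a globally Lipschitz setting via uniform a priori bounds on the solutions. By Proposition \ref{G.1}, there exists $t_B > 0$ such that $\tau^u > t_B$ for all $u \in B$ and such that the family $\{U^u(t,\cdot) : u \in B,\, t \in [0,t_B]\}$ is uniformly bounded in $C_D([0,1])$ by some constant $M = M_B > 0$. Since $g(u)=u|u|^{p-1}$ is $C^1$, it is Lipschitz on $[-M,M]$ with some constant $L = L_B = pM^{p-1}$. The difference $w := U^u - U^v$ satisfies the mild formulation
$$
w(t,x) = \int_0^1 \Phi(t,x,y)(u(y)-v(y))\,dy + \int_0^t\!\!\int_0^1 \Phi(t-s,x,y)\bigl(g(U^u(s,y))-g(U^v(s,y))\bigr)\,dyds
$$
with homogeneous Dirichlet boundary conditions. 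Using that the Dirichlet heat kernel satisfies $\int_0^1 \Phi(t,x,y)\,dy \le 1$, one obtains
$$
\|w(t,\cdot)\|_\infty \leq \|u-v\|_\infty + L_B \int_0^t \|w(s,\cdot)\|_\infty\,ds,
$$
and Gronwall's inequality yields the claimed bound with $C_B := L_B$.

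For part (ii), the plan is to bootstrap (i) along the trajectory of $U^u$. Fix $u \in C_D([0,1])$ and $t \in [0,\tau^u)$ and consider the orbit $\mathcal{O}_{u,t} := \{U^u(s,\cdot) : s \in [0,t]\}$, which is compact in $C_D([0,1])$ by continuity of the flow. Let $B := \overline{\{v : d(v,\mathcal{O}_{u,t}) \leq \rho\}}$ for some $\rho > 0$ to be chosen; this is a bounded set. Part (i) applied to $B$ furnishes constants $C_B, t_B > 0$ such that any two solutions starting in $B$ and remaining in $B$ satisfy the exponential estimate on $[0,t_B]$.

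The main (minor) obstacle is ensuring that $U^v$ stays inside $B$ on the whole interval $[0,t]$, which is needed to legitimize iterating the local estimate. This is handled by a straightforward bootstrap: partition $[0,t]$ into $N := \lceil t/t_B \rceil$ subintervals of length at most $t_B$, and choose
$$
\delta_{u,t} := \rho\, e^{-C_B t}.
$$
Then, provided $v \in B_{\delta_{u,t}}(u)$, the estimate from part (i) applied on $[0,t_B]$ gives $\|U^v(t_B,\cdot)-U^u(t_B,\cdot)\|_\infty \leq e^{C_B t_B}\delta_{u,t} < \rho$, so $U^v(t_B,\cdot) \in B$; iterating this step $N$ times (each iteration is valid because the successive differences remain below $\rho$, keeping $U^v$ in $B$) shows that $\tau^v > t$ and that
$$
\|U^u(s,\cdot) - U^v(s,\cdot)\|_\infty \leq e^{C_B s}\|u-v\|_\infty
$$
holds for all $s \in [0,t]$. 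Setting $C_{u,t} := C_B$ concludes the proof.
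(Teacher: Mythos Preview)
Your proof is correct and follows the standard argument for Lipschitz dependence on initial data (uniform a priori bounds to make $g$ locally Lipschitz, mild formulation plus the heat-kernel contraction $\int_0^1 \Phi(t,x,y)\,dy \le 1$, Gronwall, then a bootstrap along the compact orbit). The paper does not actually prove this proposition but simply cites a reference (\cite{Pao}) for these standard continuity estimates, so your argument is precisely the kind of proof one would expect behind that citation.
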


\begin{proof} These are standard continuity estimates with respect to the
initial datum which can be found, for example, in \cite{Pao}.
\end{proof}

\begin{prop}\label{A.1} If $u \in C_D([0,1])$ then $\partial^2_{xx} U^{u}$
exists for any $t \in (0,\tau^u)$. Furthermore, for any bounded set $B \subseteq
C_D([0,1])$ there exists a time $t_B > 0$ such that
\begin{enumerate}
\item [$\bullet$] $\tau^u > t_B$ for any $u \in B$
\item [$\bullet$] For any $t \in (0,t_B)$ we have $\sup_{u \in B} \left[\max\{
\| \partial_{x} U^{u} (t,\cdot) \|_{\infty}, \| \partial^2_{xx} U^{u} (t,\cdot)
\|_{\infty}\}\right] < + \infty$.
\end{enumerate}
\end{prop}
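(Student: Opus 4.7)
The plan is to work with the mild solution representation
\begin{equation*}
U^u(t,x) = \int_0^1 \Phi(t,x,y) u(y) dy + \int_0^t\int_0^1 \Phi(t-s,x,y) g(U^u(s,y)) dy\, ds,
\end{equation*}
together with the standard smoothing estimates for the Dirichlet heat kernel on $[0,1]$: there exists $C > 0$ such that for every $v \in C_D([0,1])$, $t > 0$ and $k \in \{1,2\}$,
\begin{equation*}
\left\| \p_x^k \int_0^1 \Phi(t,\cdot,y) v(y) dy \right\|_\infty \leq C\, t^{-k/2} \|v\|_\infty.
\end{equation*}
Fix a bounded $B \subseteq C_D([0,1])$ and use Proposition \ref{G.1} to pick $t_B>0$ with $\tau^u>t_B$ for all $u \in B$ and $\sup_{u \in B}\sup_{s \in [0,t_B]} \|U^u(s,\cdot)\|_\infty \le K_B<+\infty$. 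Then $\|g(U^u(s,\cdot))\|_\infty$ is uniformly bounded by $M_B:=K_B^p$ on the same range.

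The bound on $\p_x U^u$ is straightforward: differentiating the mild formula once in $x$ and applying the $k=1$ smoothing estimate yields
\begin{equation*}
\|\p_x U^u(t,\cdot)\|_\infty \leq C t^{-1/2}\|u\|_\infty + C M_B \int_0^t (t-s)^{-1/2} ds,
\end{equation*}
which is finite for each $t>0$ and uniformly bounded for $u\in B$ and $t \in [t_B/3,t_B]$ by some constant $L_B$.

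The main obstacle is $\p^2_{xx}U^u$, since the direct estimate with $k=2$ produces the divergent integral $\int_0^t(t-s)^{-1}ds$. To bypass this I would bootstrap from the first-order bound via a restart. For $t \in [2t_B/3,t_B]$, write
\begin{equation*}
U^u(t,\cdot) = \int_0^1 \Phi(t-t_B/3,\cdot,y) U^u(t_B/3,y) dy + \int_{t_B/3}^t \int_0^1 \Phi(t-s,\cdot,y) g(U^u(s,y)) dy\, ds.
\end{equation*}
The first summand contributes $\|\p^2_{xx}(\cdot)\|_\infty \leq C(t-t_B/3)^{-1} K_B$, uniformly in $u\in B$. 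For the second, observe that for $s \geq t_B/3$ the map $y\mapsto g(U^u(s,y))$ is Lipschitz with constant at most $p K_B^{p-1}L_B$ and vanishes at $y=0,1$ (since $g(0)=0$ and $U^u(s,\cdot) \in C_D([0,1])$). Using the symmetry $\p^2_{xx}\Phi(t-s,x,y) = \p^2_{yy}\Phi(t-s,x,y)$ and integrating by parts once in $y$, the boundary term drops and one obtains
\begin{equation*}
\p^2_{xx} \int_0^1 \Phi(t-s,x,y) g(U^u(s,y)) dy = -\int_0^1 \p_y \Phi(t-s,x,y) \p_y g(U^u(s,y)) dy,
\end{equation*}
which is bounded in absolute value by $C(t-s)^{-1/2} \|\p_y g(U^u(s,\cdot))\|_\infty$, using that $\|\p_y \Phi(t-s,x,\cdot)\|_{L^1([0,1])} \leq C(t-s)^{-1/2}$. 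The remaining time integral $\int_{t_B/3}^t(t-s)^{-1/2}ds$ is finite, yielding the claimed uniform bound on $t\in[2t_B/3,t_B]$. Existence and boundedness of $\p^2_{xx}U^u(t,\cdot)$ on any compact subinterval of $(0,\tau^u)$ follow by applying the same restart argument around each $t \in (0,\tau^u)$.

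The technical heart of the argument is the integration by parts step together with the justification that $\p^2_{xx}$ can be exchanged with each of the integrals in $y$ and $s$; both are handled by dominated convergence once the Lipschitz bound on $g(U^u(s,\cdot))$ is in hand. What makes everything close up is the Dirichlet condition, which appears twice: implicitly in the smoothing estimates for $\Phi$, and crucially in forcing $g(U^u(s,\cdot))$ to vanish at the boundary so that no boundary contribution spoils the integration by parts.
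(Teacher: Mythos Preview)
Your argument is correct: the mild formulation plus heat-kernel smoothing handles $\partial_x U^u$ directly, and the restart combined with one integration by parts in $y$ (using $\partial_{xx}^2\Phi=\partial_{yy}^2\Phi$ and $g(U^u(s,\cdot))\in C_D([0,1])$ to kill the boundary term) upgrades to $\partial_{xx}^2 U^u$ without the divergent $\int_0^t(t-s)^{-1}\,ds$. The only cosmetic point is that you state the second-derivative bound on $[2t_B/3,t_B]$ while the proposition asks for each fixed $t\in(0,t_B)$; as you note at the end, for arbitrary such $t$ one simply restarts at $t/2$ instead of $t_B/3$ and uses the first-derivative bound on $[t/2,t]$, so nothing is missing.

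The paper does not give its own proof here: it simply refers to the analysis in \cite[Lemma~A.1]{B1}. That reference carries out essentially the same parabolic bootstrapping via the heat semigroup smoothing estimates, so your proposal is not a different route but rather a fleshed-out version of what the paper cites.
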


\begin{proof} One can obtain this result by following the analysis in the proof
of \cite[Lemma~A.1]{B1}.
\end{proof}

\begin{prop}\label{A.2} For any bounded set $B \subseteq C_D([0,1])$ there
exists $t_B > 0$ such that
\begin{enumerate}
\item [$\bullet$] $\tau^u > t_B$ for any $u \in B$
\item [$\bullet$] For any $t \in (0,t_B)$ there exist positive constants
$R_t,N_t$ such that for every $u \in B$ the function $U^{u}(t,\cdot)$ belongs to
the compact set
$$
\gamma_{R_t,N_t} = \{ v \in C_D([0,1]) : \|v\|_\infty \leq R_t \,,\, |v(x)-v(y)|
\leq N_t |x-y| \text{ for all }x,y \in [0,1]\}.
$$
\end{enumerate}
\end{prop}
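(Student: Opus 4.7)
The plan is to obtain Proposition \ref{A.2} almost directly from the two preceding growth/regularity estimates, namely Propositions \ref{G.1} and \ref{A.1}. The idea is that the regularizing effect of the heat semigroup, already captured quantitatively in \ref{A.1}, automatically turns a uniform bound on the initial data into uniform Lipschitz control on the solution after any positive time, as long as this time is small enough so that all solutions starting in $B$ are still defined.

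First I would use Proposition \ref{G.1} to pick $t_B^{(1)} > 0$ such that $\tau^u > t_B^{(1)}$ for every $u \in B$ and such that the map $t \mapsto \sup_{u \in B} \|U^u(t,\cdot)\|_\infty$ is finite on $[0,t_B^{(1)}]$; this produces a constant $R_t := \sup_{u \in B} \|U^u(t,\cdot)\|_\infty < +\infty$ for each $t \in [0,t_B^{(1)}]$. Next I would invoke Proposition \ref{A.1} to pick $t_B^{(2)} \le t_B^{(1)}$ such that for every $t \in (0,t_B^{(2)})$ the quantity
$$
N_t := \sup_{u \in B} \|\partial_x U^u(t,\cdot)\|_\infty
$$
is finite. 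Setting $t_B := t_B^{(2)}$ gives both defining properties on the same interval.

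Once $N_t < +\infty$, the Lipschitz bound follows from the fundamental theorem of calculus: for any $u \in B$, $t \in (0,t_B)$ and $x,y \in [0,1]$,
$$
|U^u(t,x) - U^u(t,y)| \le \Big\|\partial_x U^u(t,\cdot)\Big\|_\infty \, |x-y| \le N_t\,|x-y|,
$$
so $U^u(t,\cdot) \in \gamma_{R_t,N_t}$ for every $u \in B$. Compactness of $\gamma_{R_t,N_t}$ in $C_D([0,1])$ is Arzel\`a--Ascoli: the family is uniformly bounded by $R_t$ and uniformly (in fact Lipschitz-)equicontinuous with constant $N_t$, and the homogeneous boundary condition is preserved under uniform limits.

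There is no substantive obstacle here; the content lies entirely in Propositions \ref{G.1} and \ref{A.1}, and the only mildly delicate point is coordinating the two choices of $t_B$. If one preferred to avoid citing \ref{A.1} and work directly, the same bound on $N_t$ can be obtained from the mild formulation, using that $\int_0^1 |\partial_x \Phi(t,x,y)|\,dy = O(t^{-1/2})$ and that the source term $g(U^u(s,\cdot))$ stays uniformly bounded for $s \in [0,t_B]$ and $u \in B$ thanks to Proposition \ref{G.1}, so that differentiating the heat representation in $x$ yields $\|\partial_x U^u(t,\cdot)\|_\infty \lesssim t^{-1/2}\|u\|_\infty + \int_0^t (t-s)^{-1/2}\,ds$, uniformly over $u \in B$. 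Either route concludes the proof.
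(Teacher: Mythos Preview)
Your proof is correct and follows essentially the same route as the paper, which simply says the result is ``a direct consequence of Propositions \ref{G.1}--\ref{A.1} and the mean value theorem.'' You have merely spelled out the coordination of the two choices of $t_B$ and replaced the mean value theorem by the equivalent fundamental-theorem-of-calculus estimate; the optional direct heat-kernel computation at the end is a nice alternative but is not needed.
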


\begin{proof} This is direct consequence of Propositions \ref{G.1}-\ref{A.1} and
the mean value theorem.
\end{proof}

\begin{prop}\label{A.7} The following local and pointwise growth estimates hold:
\begin{enumerate}
\item [i.] Given a bounded set $B \subseteq C_D([0,1])$ there exists $t_B > 0$
such that
\begin{itemize}
\item [$\bullet$] $\tau^u > t_B$ for any $u \in B$
\item [$\bullet$] For any $t \in (0,t_B)$ there exists $C_{t,B} > 0$ such that
for all $u,v \in B$
$$
\| \p_x U^{u}(t,\cdot) - \p_x U^v (t,\cdot) \|_\infty \leq C_{t,B} \| u -
v\|_\infty.
$$
\end{itemize}
\item [ii.] Given $u \in C_D([0,1])$ and $t \in (0,\tau^u)$ there exist
$C_{u,t}, \delta_{u,t} > 0$ such that
\begin{itemize}
\item [$\bullet$] $\tau^v > t$ for any $v \in B_{\delta_{u,t}}(u)$
\item [$\bullet$] For any $v \in B_{\delta_{u,t}}(u)$
$$
\| \p_x U^{u}(t,\cdot) - \p_x U^v (t,\cdot) \|_\infty \leq C_{u,t} \| u -
v\|_\infty.
$$
\end{itemize}
\end{enumerate}
\end{prop}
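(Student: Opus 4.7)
The plan is to derive both estimates from the mild (Duhamel) formulation
$$
U^u(t,x) = \int_0^1 \Phi(t,x,y)u(y)\,dy + \int_0^t\!\int_0^1 \Phi(t-s,x,y) g(U^u(s,y))\,dy\,ds,
$$
combined with the standard heat-kernel gradient bound $\sup_{x \in [0,1]}\int_0^1 |\p_x \Phi(t,x,y)|\,dy \leq C t^{-1/2}$, the local Lipschitz character of $g$ on bounded sets, and the $C^0$ Lipschitz dependence on initial data already established in Proposition \ref{G.2}. Proposition \ref{A.1} ensures that $\p_x U^u(t,\cdot)$ is well defined and continuous for $t \in (0,\tau^u)$ and justifies differentiating in $x$ under the integral sign, which yields the key identity
\begin{equation*}
\p_x U^u(t,x) - \p_x U^v(t,x) = \int_0^1 \p_x \Phi(t,x,y)(u-v)(y)\,dy + \int_0^t\!\int_0^1 \p_x \Phi(t-s,x,y)\big[g(U^u(s,y))-g(U^v(s,y))\big]\,dy\,ds.
\end{equation*}

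For (i), fix a bounded set $B \subseteq C_D([0,1])$. Proposition \ref{G.1} furnishes $t_B, M > 0$ with $\tau^w > t_B$ and $\|U^w(s,\cdot)\|_\infty \leq M$ for all $w \in B$ and $s \in [0,t_B]$, and Proposition \ref{G.2}(i) supplies $C_B > 0$ with $\|U^u(s,\cdot) - U^v(s,\cdot)\|_\infty \leq e^{C_B s}\|u-v\|_\infty$ on the same range. Letting $L_M$ denote the Lipschitz constant of $g$ on $[-M,M]$, taking $\sup_x$ in the identity above and applying the heat-kernel gradient bound leads to
$$
\|\p_x U^u(t,\cdot) - \p_x U^v(t,\cdot)\|_\infty \leq \left(C t^{-1/2} + C L_M \int_0^t (t-s)^{-1/2}\,e^{C_B s}\,ds\right)\|u-v\|_\infty,
$$
and since the integral converges for every $t \in (0,t_B)$, the bracketed factor is the desired finite constant $C_{t,B}$.

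For (ii), given $u \in C_D([0,1])$ and $t \in (0,\tau^u)$, the orbit $\{U^u(s,\cdot) : s \in [0,t]\}$ is the continuous image of the compact interval $[0,t]$ and hence bounded. Proposition \ref{G.2}(ii) provides $\delta_{u,t} > 0$ such that $\tau^v > t$ for all $v \in B_{\delta_{u,t}}(u)$; shrinking $\delta_{u,t}$ further, one may ensure that the orbits of all such $v$ remain inside a fixed bounded set $B$. If $t$ lies below the $t_B$ of part (i) we are done; otherwise one partitions $[0,t]$ into finitely many subintervals of length less than $t_B$, iterates the $C^0$ Lipschitz estimate of Proposition \ref{G.2}(i) to propagate $\|U^u(s,\cdot)-U^v(s,\cdot)\|_\infty \leq e^{C_B s}\|u-v\|_\infty$ up to time $t - t_B/2$, and applies part (i) on the final subinterval via the semigroup identity $U^u(t,\cdot) = U^{U^u(t-s,\cdot)}(s,\cdot)$. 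The only real obstacle is the $t^{-1/2}$ singularity of the heat-kernel gradient at $t=0$, which rules out any uniform Lipschitz bound on $\p_x U^u$ as $t \to 0^+$; for each fixed positive time, however, this singularity is integrable and causes no difficulty.
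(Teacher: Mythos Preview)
Your argument is correct and is exactly the standard parabolic-regularity computation one expects here: differentiate the Duhamel formula, use the $t^{-1/2}$ bound on $\int_0^1 |\p_x \Phi(t,x,y)|\,dy$, control the nonlinear term via the local Lipschitz constant of $g$ and the $C^0$ Lipschitz dependence from Proposition~\ref{G.2}, and for (ii) reduce to (i) on a short final subinterval via the semigroup property. The paper itself does not give any details but simply refers the reader to the analysis in \cite[Lemma~A.1]{B1}, which is precisely this Duhamel-plus-heat-kernel-gradient argument; your proposal therefore amounts to a fully written-out version of what the paper leaves implicit.
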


\begin{proof} These estimates also follow from the analysis in the proof
of \cite[Lemma~A.1]{B1}.
\end{proof}

\begin{prop}\label{A.3} For any equilibrium point $w$ of the deterministic
system let us consider its stable manifold $\mathcal{W}^w$ defined as
$$
\mathcal{W}^{w}:=\{ u \in C_D([0,1]) : U^{u} \text{ is globally defined and
}U^{u}(t,\cdot) \underset{t \rightarrow +\infty}{\longrightarrow} w\}.
$$ Notice that $\mathcal{W}^{\mathbf{0}}=\mathcal{D}_{\mathbf{0}}$. Then for any
bounded set $B \subseteq \mathcal{W}^w$ there exists $t_B > 0$ such that for any
$t_0 \in [0,t_B]$ and $r>0$ we have
$$
\sup_{u \in B} \left[ \inf \{ t \geq t_0 : d( U^u(t,\cdot), w) \leq r \} \right]
< +\infty
$$ whenever one of the following conditions hold:
\begin{enumerate}
\item [$\bullet$] $w \neq \mathbf{0}$
\item [$\bullet$] $w = \mathbf{0}$ and $B$ is at a positive distance from
$\mathcal{W} := \bigcup_{n \in \Z - \{0\}} \mathcal{W}^{z^{(n)}}$.
\end{enumerate} Furthermore, if $B \subseteq \mathcal{D}_e$ is bounded and at a
positive distance from $\mathcal{W}$ then for any $n \in \N$ we have
$$
\sup_{u \in B} \tau^{(n),u} < +\infty.
$$
\end{prop}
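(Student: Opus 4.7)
The plan is to prove both assertions by a contradiction-and-compactness argument. Given $B$, fix $t_B > 0$ small enough that Propositions \ref{G.1}, \ref{G.2} and \ref{A.2} all apply on $[0,t_B]$ for $B$: thus $\tau^u > t_B$ for every $u\in B$, the displacement $\sup_{u\in B}\|U^u(t_B,\cdot)-u\|_\infty$ is arbitrarily small if $t_B$ is small, and $\{U^u(t_B,\cdot):u\in B\}$ lies in a compact subset of $C_D([0,1])$. Since the infimum in the first statement is monotone nondecreasing in $t_0$, it suffices to treat $t_0=t_B$.

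Suppose toward a contradiction that the supremum in the first assertion is $+\infty$ and extract $(u_k)\subseteq B$ with $T_k:=\inf\{t\geq t_B:d(U^{u_k}(t,\cdot),w)\leq r\}\to\infty$. By Proposition \ref{A.2}, along a further subsequence $U^{u_k}(t_B,\cdot)\to v_\infty$ in $C_D([0,1])$. The key step is to show $v_\infty\in\mathcal{W}^w$: once this is established, there is $s^*>0$ with $d(U^{v_\infty}(s^*,\cdot),w)<r/2$, and Proposition \ref{G.2} yields $d(U^{u_k}(t_B+s^*,\cdot),w)<r$ for all large $k$, contradicting $T_k\to\infty$. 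When $w=\mathbf{0}$ with $B$ at positive distance from $\mathcal{W}$, Proposition \ref{G.1} lets us shrink $t_B$ so that $\{U^u(t_B,\cdot):u\in B\}$ remains at positive distance from $\mathcal{W}$; this distance is preserved in the limit, and since each $U^{u_k}(t_B,\cdot)\in\mathcal{D}_{\mathbf{0}}$ by forward invariance, $v_\infty\in\overline{\mathcal{D}_{\mathbf{0}}}\setminus\mathcal{W}=\mathcal{D}_{\mathbf{0}}=\mathcal{W}^{\mathbf{0}}$. When $w\neq\mathbf{0}$, invariance of $\mathcal{W}^w$ gives only $v_\infty\in\overline{\mathcal{W}^w}$, and to pin down $v_\infty\in\mathcal{W}^w$ one combines the Lyapunov structure (Proposition \ref{Lyapunov}) with the dichotomy of Proposition \ref{descomp1}: $U^{v_\infty}$ either blows up or tends to some equilibrium $z^{(m)}$, and each alternative $z^{(m)}\neq w$ (as well as blow-up) is ruled out by showing via Proposition \ref{G.2} that $U^{u_k}$ would enter a neighborhood of $z^{(m)}$ (respectively, attain arbitrarily large norm) in a uniformly bounded time, contradicting $u_k\in\mathcal{W}^w$ together with the invariance of each region in the decomposition \eqref{decomp12}.

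For the final assertion, the same scheme applies. Assume $\tau^{(n),u_k}\to\infty$ for some $(u_k)\subseteq B$ and extract $U^{u_k}(t_B,\cdot)\to v_\infty$. Proposition \ref{G.1} ensures $v_\infty$ is at positive distance from $\mathcal{W}$, so $v_\infty\in\mathcal{D}_{\mathbf{0}}\cup\mathcal{D}_e$. The case $v_\infty\in\mathcal{D}_{\mathbf{0}}$ is excluded since $\mathcal{D}_{\mathbf{0}}$ is open, so $U^{u_k}(t_B,\cdot)\in\mathcal{D}_{\mathbf{0}}$ for large $k$, contradicting $U^{u_k}(t_B,\cdot)\in\mathcal{D}_e$ which follows from forward invariance of $\mathcal{D}_e$. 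Hence $v_\infty\in\mathcal{D}_e$, and Corollary \ref{contdetexp} gives $\tau^{u_k}=t_B+\tau^{U^{u_k}(t_B,\cdot)}\to t_B+\tau^{v_\infty}<+\infty$, contradicting $\tau^{(n),u_k}\leq\tau^{u_k}$ with $\tau^{(n),u_k}\to\infty$.

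The main obstacle is the identification $v_\infty\in\mathcal{W}^w$ in the case $w\neq\mathbf{0}$: since the stable manifold $\mathcal{W}^w$ of a saddle need not be closed in $C_D([0,1])$, the argument must combine the Lyapunov structure, the local saddle geometry at $w$, and the global phase-diagram from Section \ref{deter} to exhaust the possible $\omega$-limits of $U^{v_\infty}$.
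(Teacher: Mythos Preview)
Your overall scheme---flow for a short time $t_B$ so that $\{U^u(t_B,\cdot):u\in B\}$ lies in a compact set via Proposition~\ref{A.2}, then exploit that the hitting time to $B_r(w)$ is finite and upper semicontinuous on $\mathcal{W}^w$---is exactly the paper's argument, phrased as a contradiction rather than as ``upper semicontinuous plus compact''. For $w=\mathbf{0}$ and for the final $\mathcal{D}_e$ assertion your proof matches the paper's essentially line for line.

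The substantive divergence is in the case $w\neq\mathbf{0}$. The paper simply asserts that $\mathcal{W}^w$ is a \emph{closed} set, so the compact image $\{U^u(t_B,\cdot):u\in B\}$ automatically sits inside $\mathcal{W}^w$ and the upper semicontinuity argument finishes. You avoid this assertion and instead try to pin down $v_\infty\in\mathcal{W}^w$ via the dichotomy of Proposition~\ref{descomp1}. Ruling out $v_\infty\in\mathcal{D}_{\mathbf{0}}\cup\mathcal{D}_e$ works fine by openness, so $v_\infty\in\mathcal{W}$. But your elimination of the alternative $U^{v_\infty}\to z^{(m)}$ with $z^{(m)}\neq w$ has a gap: you argue that $U^{u_k}$ would then enter a neighborhood of $z^{(m)}$ in uniformly bounded time and claim this contradicts $u_k\in\mathcal{W}^w$ ``together with the invariance of each region in the decomposition \eqref{decomp12}''. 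It does not. The decomposition \eqref{decomp12} only separates $\mathcal{D}_{\mathbf{0}}$, $\mathcal{W}$, and $\mathcal{D}_e$; it does not separate the individual stable manifolds $\mathcal{W}^{z^{(n)}}$ inside $\mathcal{W}$. A trajectory in $\mathcal{W}^w$ is perfectly allowed to visit an arbitrarily small neighborhood of another saddle $z^{(m)}$ before eventually converging to $w$, so merely entering such a neighborhood in bounded time yields no contradiction. You yourself flag this as ``the main obstacle'', and indeed it is unresolved in your proposal.

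In short: for $w=\mathbf{0}$ and for $\mathcal{D}_e$ your proof is correct and coincides with the paper's; for $w\neq\mathbf{0}$ the paper bypasses the difficulty by invoking closedness of $\mathcal{W}^w$, whereas your alternative route, as written, does not close.
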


\begin{proof} Let us suppose first that $w \neq \mathbf{0}$. Then, since
$\mathcal{W}^\omega$ is a closed set, by Proposition \ref{A.2} we have that the
family $\{ U^u(t_B ,\cdot) : u \in B \}$ is contained in a compact set $B'
\subseteq \mathcal{W}^{w}$ for some suitably small $t_B > 0$. Hence, we obtain
that
$$
\sup_{u \in B} \left[ \inf \{ t \geq t_0 : d( U^u(t,\cdot), w) \leq r \} \right]
\leq t_B + \sup_{v \in B'} \left[ \inf \{ t \geq 0 : d( U^v(t,\cdot), w) < r \}
\right]
$$ Since the application $v \mapsto \inf \{ t \geq 0 : d( U^v(t,\cdot), w) < r
\}$ is upper semicontinuous and finite on $\mathcal{W}^w$, we conclude that the
right hand side is finite and thus the result follows in this case.
Now, if $w = \mathbf{0}$ then once again by Proposition \ref{A.2} we have that
the family $\{ U^u(t_B ,\cdot) : u \in B \}$ is contained in a compact set $B'
\subseteq \mathcal{D}_{\mathbf{0}}$. By Proposition \ref{G.1} we may choose $t_B
> 0$ sufficiently small so as to guarantee that $B'$ is at a positive distance
from $\mathcal{W}$. From here we conclude the proof as in the previous case.
Finally, the last statement of the proposition is proved in a completely
analogous fashion.
\end{proof}

\subsection{Properties of the potential $S$}

\begin{prop}\label{Lyapunov} The mapping $t \mapsto S( U^u(t,\cdot) )$ is
monotone decreasing and continuous for any $u \in H^1_0((0,1))$.
\end{prop}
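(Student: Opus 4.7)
The plan is to exploit the gradient-flow structure expressed by \eqref{formalPDE}, which formally gives $\frac{d}{dt}S(U^u(t,\cdot)) = -\int_0^1 (\p_t U^u)^2\,dx \leq 0$; making this rigorous requires regularity for $t>0$ together with an approximation argument to reach the endpoint $t=0$.

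First I would collect the needed regularity. For any $u \in H^1_0((0,1)) \subseteq C_D([0,1])$ and any $t_0 \in (0,\tau^u)$, the parabolic smoothing estimates in Propositions \ref{A.1} and \ref{A.7} guarantee that $U^u$ is classical in a neighborhood of $\{t_0\} \times [0,1]$, with $\p_t U^u$, $\p_x U^u$, $\p^2_{xx} U^u$ all continuous and $U^u(t,\cdot)$ satisfying the Dirichlet boundary conditions. I may then differentiate $S$ along the trajectory, integrate by parts in $x$ using the boundary conditions, and invoke \eqref{MainPDE}:
\begin{equation*}
\frac{d}{dt}\bigg|_{t=t_0} S(U^u(t,\cdot)) = \int_0^1 \bigl[\p_x U^u\,\p_{tx} U^u - g(U^u)\,\p_t U^u\bigr]\,dx = -\int_0^1 \bigl[\p^2_{xx} U^u + g(U^u)\bigr]\p_t U^u\,dx,
\end{equation*}
and the PDE identifies the bracket with $\p_t U^u$, so the right-hand side equals $-\|\p_t U^u(t_0,\cdot)\|_{L^2}^2 \leq 0$. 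Integrating yields the energy identity
\begin{equation*}
S(U^u(t_2,\cdot)) - S(U^u(t_1,\cdot)) = -\int_{t_1}^{t_2}\!\!\int_0^1 (\p_s U^u)^2\,dx\,ds \qquad (0 < t_1 < t_2 < \tau^u),
\end{equation*}
which establishes both monotonicity and continuity of $t \mapsto S(U^u(t,\cdot))$ on the open interval $(0,\tau^u)$.

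The main obstacle is continuity at the endpoint $t=0$ for general $u \in H^1_0$. I would handle this in two steps. For the upper bound, approximate $u$ by $u_n \in C^\infty_c((0,1))$ with $u_n \to u$ in $H^1_0$; for each such $u_n$ the solution is classical up to $t=0$, so the energy identity above extends to give $S(U^{u_n}(t,\cdot)) \leq S(u_n)$ for all small $t>0$. Passing to the limit in $n$ using the continuous dependence on initial data from Proposition \ref{G.2}, together with the $C_D$-lower semicontinuity of $S$ and the continuity of $S$ on $H^1_0$ (for which the one-dimensional embedding $H^1_0 \hookrightarrow C_D$ controls the $L^{p+1}$ term), yields $S(U^u(t,\cdot)) \leq S(u)$, hence $\limsup_{t \to 0^+} S(U^u(t,\cdot)) \leq S(u)$. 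The matching lower bound follows from $U^u(t,\cdot) \to u$ in $C_D$ as $t \to 0^+$ (Proposition \ref{G.1}) and the $C_D$-lower semicontinuity of $S$. Combined with the monotonicity on $(0,\tau^u)$, this gives $\lim_{t \to 0^+} S(U^u(t,\cdot)) = S(u)$ and concludes the proof.
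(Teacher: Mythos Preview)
Your proposal is correct and follows the same core approach as the paper: both rely on the gradient-flow identity $\frac{d}{dt}S(U^u(t,\cdot)) = -\int_0^1 (\p_t U^u)^2\,dx$. The paper states this computation in one line and defers all details to \cite[Lemma~17.5]{QS}, whereas you spell out the regularity needed for $t>0$ and supply an explicit approximation/lower-semicontinuity argument for continuity at $t=0$; this extra care is not in the paper's own text but is exactly what the cited reference provides.
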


\begin{proof} An easy computation shows that
\[
 \frac{d}{dt}S(U^u(t,\cdot))=  - \int_0^1 (\partial_t U^u(t,x))^2\,dx \leq 0
\]
from which the result follows. Details can be found in \cite[Lemma 17.5]{QS}.
\end{proof}

\begin{prop}\label{A.4} The potential $S$ is lower semicontinuous.
\end{prop}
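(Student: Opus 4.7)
The plan is to prove lower semicontinuity directly from the definition: given $v_n \to v$ in $C_D([0,1])$ (supremum norm), show $S(v) \leq \liminf_{n \to \infty} S(v_n)$. Without loss of generality assume $L := \liminf_n S(v_n) < +\infty$ (otherwise there is nothing to prove), and by passing to a subsequence assume the $\liminf$ is in fact a limit and that $\sup_n S(v_n) \leq L+1$. In particular every $v_n$ must lie in $H^1_0((0,1))$.

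The first step is to dispose of the nonlinear part of $S$. Since $v_n \to v$ uniformly, the sequence $(v_n)$ is uniformly bounded in $C([0,1])$, so $|v_n|^{p+1} \to |v|^{p+1}$ uniformly on $[0,1]$ and hence
\[
\int_0^1 \frac{|v_n|^{p+1}}{p+1}\,dx \;\longrightarrow\; \int_0^1 \frac{|v|^{p+1}}{p+1}\,dx.
\]
In particular this integral is bounded along the sequence, and combined with $S(v_n) \leq L+1$ this yields a uniform bound
\[
\sup_n \int_0^1 \tfrac{1}{2}(v_n')^2\,dx \;<\; +\infty,
\]
so that $(v_n)$ is bounded in $H^1_0((0,1))$.

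The second step is to identify the weak limit. By reflexivity of $H^1_0((0,1))$ we may extract a further subsequence (still denoted $v_n$) such that $v_n \rightharpoonup w$ weakly in $H^1_0((0,1))$ for some $w \in H^1_0((0,1))$. By the compact embedding $H^1_0((0,1)) \hookrightarrow C([0,1])$ this subsequence converges uniformly to $w$, and since we already know $v_n \to v$ uniformly we conclude $w = v$. In particular $v \in H^1_0((0,1))$, which is needed to make the inequality $S(v) \leq L$ meaningful.

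The third step combines these ingredients with the standard lower semicontinuity of the $L^2$-norm under weak convergence applied to $v_n' \rightharpoonup v'$ in $L^2((0,1))$, giving
\[
\int_0^1 \tfrac{1}{2}(v')^2\,dx \;\leq\; \liminf_{n \to \infty} \int_0^1 \tfrac{1}{2}(v_n')^2\,dx.
\]
Adding this to the limit identity for the nonlinear term, which is a genuine equality, we obtain $S(v) \leq \liminf_n S(v_n) = L$, as required. There is no serious obstacle here; the only point that deserves a little care is that lower semicontinuity is claimed with respect to the sup norm (not the $H^1_0$ norm), which is why the compact embedding $H^1_0 \hookrightarrow C([0,1])$ in dimension one is essential to identify the weak $H^1_0$ limit with the uniform limit $v$.
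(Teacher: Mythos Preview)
Your proof is correct. The paper does not actually give a proof of this proposition; it simply cites a standard reference (Evans' textbook). Your argument is the standard one: continuity of the lower-order term under uniform convergence, an $H^1_0$ bound extracted from the finiteness of $S(v_n)$, identification of the weak $H^1_0$ limit with the uniform limit via the compact embedding $H^1_0((0,1))\hookrightarrow C([0,1])$, and finally weak lower semicontinuity of the Dirichlet energy. The only minor remark is that the compact embedding is not strictly needed to identify $w=v$: weak convergence in $H^1_0$ already implies weak convergence in $L^2$, and uniform convergence to $v$ gives strong (hence weak) $L^2$ convergence to $v$, so $w=v$ follows by uniqueness of weak limits. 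Either way the argument is complete and self-contained, which is more than the paper offers.
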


\begin{proof} See \cite{E}.
\end{proof}

\begin{prop}\label{S.1} Given $u \in C_D([0,1])$ and $t \in (0,\tau^u)$ there
exist constants $C_{u,t}, \delta_{u,t} > 0$ such that
\begin{itemize}
\item [$\bullet$] $\tau^v > t$ for any $v \in B_{\delta_{u,t}}(u)$
\item [$\bullet$] For any $v \in B_{\delta_{u,t}}(u)$ one has
$$
\| S(U^{u}(t,\cdot)) - S(U^v (t,\cdot) ) \|_\infty \leq C_{u,t} \| u -
v\|_\infty.
$$
\end{itemize}
\end{prop}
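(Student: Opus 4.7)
The plan is to combine the growth and regularity estimates already gathered in the appendix, namely Propositions \ref{G.2} and \ref{A.7}, with the explicit formula defining $S$. Fix $u \in C_D([0,1])$ and $t \in (0,\tau^u)$. First I would invoke part (ii) of Proposition \ref{G.2} to obtain $\delta_{u,t}^{(1)}, C_{u,t}^{(1)} > 0$ such that $\tau^v > t$ for every $v \in B_{\delta_{u,t}^{(1)}}(u)$ and
\[
\|U^u(t,\cdot) - U^v(t,\cdot)\|_\infty \leq e^{C_{u,t}^{(1)} t}\|u-v\|_\infty.
\]
Similarly, from part (ii) of Proposition \ref{A.7} I would obtain $\delta_{u,t}^{(2)}, C_{u,t}^{(2)} > 0$ providing an analogous Lipschitz estimate for $\partial_x U^u(t,\cdot) - \partial_x U^v(t,\cdot)$. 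Then I would set $\delta_{u,t} := \min\{\delta_{u,t}^{(1)},\delta_{u,t}^{(2)}\}$, so that both controls hold simultaneously on $B_{\delta_{u,t}}(u)$.

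Next, using Proposition \ref{A.1} together with the first bound above, I would observe that the quantities
\[
M_{u,t} := \sup_{v \in B_{\delta_{u,t}}(u)} \|U^v(t,\cdot)\|_\infty \quad \text{and} \quad N_{u,t} := \sup_{v \in B_{\delta_{u,t}}(u)} \|\partial_x U^v(t,\cdot)\|_\infty
\]
are finite (shrinking $\delta_{u,t}$ if necessary, and applying \ref{A.1} to the bounded set $B_{\delta_{u,t}}(u)$ on a small initial interval, then propagating to time $t$ using the continuity estimate of Proposition \ref{G.2}). In particular, each $U^v(t,\cdot)$ is in $H^1_0((0,1))$ so that $S(U^v(t,\cdot))$ is finite.

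The main computation is then to write, for $v \in B_{\delta_{u,t}}(u)$,
\[
S(U^u(t,\cdot)) - S(U^v(t,\cdot)) = \frac{1}{2}\int_0^1 \bigl[(\partial_x U^u)^2 - (\partial_x U^v)^2\bigr]\,dx - \frac{1}{p+1}\int_0^1\bigl[|U^u|^{p+1} - |U^v|^{p+1}\bigr]\,dx
\]
(evaluated at time $t$). I would bound the first integrand by factoring $(\partial_x U^u - \partial_x U^v)(\partial_x U^u + \partial_x U^v)$ and using $N_{u,t}$ together with the Lipschitz estimate from \ref{A.7}. For the second integrand, a mean value argument applied to $s \mapsto |s|^{p+1}$ yields the pointwise bound $(p+1)\max(|U^u|,|U^v|)^p |U^u - U^v|$, which is then controlled via $M_{u,t}$ and the Lipschitz estimate from \ref{G.2}. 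Combining these two bounds gives the desired inequality with a constant $C_{u,t}$ depending on $M_{u,t}$, $N_{u,t}$, $C_{u,t}^{(1)}$, $C_{u,t}^{(2)}$, $p$ and $t$.

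The proof is essentially bookkeeping of previously established estimates, so I do not expect any genuine obstacle. The only mildly delicate point is making sure that the bounds $M_{u,t}$ and $N_{u,t}$ can be taken uniform over a full neighborhood of $u$ and not just at the single point $u$; this requires that the $t$-time bound from Proposition \ref{A.1} be applied on a bounded set containing $u$ (combined with a short-time continuity argument via Proposition \ref{G.2}), which is standard.
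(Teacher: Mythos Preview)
Your proposal is correct and follows exactly the approach indicated in the paper, which simply records that the result is a direct consequence of Propositions~\ref{G.2} and~\ref{A.7}. You have merely spelled out the details: the Lipschitz control of $U^v(t,\cdot)$ and $\partial_x U^v(t,\cdot)$ in the initial datum from those two propositions, combined with the explicit formula for $S$ and elementary factoring/mean-value bounds, is precisely what is intended.
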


\begin{proof} This is a direct consequence of Propositions \ref{A.7} and
\ref{G.2}.
\end{proof}

\subsection{Properties of the quasipotential $V$}

\begin{prop}\label{A.5} The mapping $u \mapsto V(\mathbf{0},u)$ is lower
semicontinuous on $C_D([0,1])$.
\end{prop}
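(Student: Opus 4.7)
My plan is a classical path-surgery argument. Take $u_n \to u$ in $C_D([0,1])$ and set $L := \liminf_n V(\mathbf{0}, u_n)$; if $L = +\infty$ there is nothing to prove, so assume $L < +\infty$ and pass to a subsequence along which $V(\mathbf{0}, u_n) \to L$. Fix $\eta > 0$ and choose paths $\varphi_n$ from $\mathbf{0}$ to $u_n$ with $I(\varphi_n) \leq V(\mathbf{0}, u_n) + \eta$. The strategy is to extend each $\varphi_n$ by a short correction ending at $u$ whose rate goes to zero; this produces paths from $\mathbf{0}$ to $u$ of total rate at most $L + \eta + o(1)$, giving $V(\mathbf{0}, u) \leq L + \eta$, and sending $\eta \to 0^+$ yields the claim.

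The correction is built in three pieces, depending on a small parameter $s_0 > 0$. First, append to $\varphi_n$ the deterministic arc $u_n \to \hat u_n := U^{u_n}(s_0, \cdot)$, which has rate zero. Next, linearly interpolate from $\hat u_n$ to $\hat u := U^u(s_0, \cdot)$ over a short time interval $[0, \delta_n]$ via $\psi_n(s, x) := (1 - s/\delta_n)\, \hat u_n(x) + (s/\delta_n)\, \hat u(x)$. By Proposition \ref{A.1}, $\hat u_n$ and $\hat u$ lie in a set uniformly bounded in $C^2$ (applied to the bounded family $\{u_n\} \cup \{u\}$), while Proposition \ref{G.2} yields $\|\hat u_n - \hat u\|_\infty \to 0$; a direct calculation then gives $I(\psi_n) = O(\|\hat u_n - \hat u\|_\infty^2/\delta_n + \delta_n)$, which is $o(1)$ upon choosing, say, $\delta_n := \|\hat u_n - \hat u\|_\infty^{1/2}$. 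Finally, attach the time-reversed deterministic arc $\tilde\varphi(t) := U^u(s_0 - t, \cdot)$ for $t \in [0, s_0]$ to return from $\hat u$ to $u$.

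The crux of the argument, and the main obstacle, is bounding the rate of this last piece, since the deterministic flow admits no zero-rate inverse. Using that $U^u$ satisfies \eqref{MainPDE}, one computes $\partial_t \tilde\varphi - \partial_{xx}\tilde\varphi - g(\tilde\varphi) = -2\, \partial_s U^u(s_0 - t, \cdot)$, whence
\[
 I(\tilde\varphi) \;=\; 2 \int_0^{s_0}\!\!\int_0^1 |\partial_s U^u(s, x)|^2 \, dx\, ds \;=\; 2\bigl(S(u) - S(\hat u)\bigr),
\]
the second equality being the Lyapunov identity $\tfrac{d}{dt} S(U^u(t, \cdot)) = -\int_0^1 |\partial_t U^u|^2$ from the proof of Proposition \ref{Lyapunov}. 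For $u \in H^1_0((0,1))$, continuity of $s \mapsto S(U^u(s, \cdot))$ forces $I(\tilde\varphi) \to 0$ as $s_0 \to 0^+$. The assumption $u \in H^1_0$ is harmless: $V(\mathbf{0}, u) < +\infty$ already implies it via the trace inclusion $W^{1,2}_2([0,T]\times[0,1]) \hookrightarrow C([0,T]; H^1_0((0,1)))$ applied to any finite-rate path, and the same inclusion applied to the $\varphi_n$'s rules out the pathological scenario $u \notin H^1_0$ with $L < +\infty$ by a weak-compactness argument on their time-$T_n$ traces. Letting first $n \to \infty$ and then $s_0 \to 0^+$ finishes the proof.
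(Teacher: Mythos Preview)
Your proof is correct and follows essentially the same route as the paper's: both construct a path from $\mathbf{0}$ to $u$ by appending to a near-optimal path $\varphi_n$ the three-step correction consisting of (i) forward deterministic flow for a short time, (ii) a linear interpolation between the two flowed points, and (iii) the time-reversed deterministic flow back to $u$, and both estimate the cost of (iii) by the Lyapunov identity $I(\tilde\varphi)=2\bigl(S(u)-S(U^u(s_0,\cdot))\bigr)$. The paper handles the degenerate case $u\notin H^1_0$ by combining lower semicontinuity of $S$ with Proposition~\ref{costo} to force $L=+\infty$; your weak-compactness argument reaches the same conclusion, though the uniform $H^1_0$ bound on the traces $u_n$ that you need is not quite delivered by the trace embedding alone (the $W^{1,2}_2$-norms of the $\varphi_n$ are not controlled by $I(\varphi_n)$, and the embedding constant depends on the unbounded time horizons $T_n$) --- the cleanest fix is exactly the paper's: Proposition~\ref{costo} gives $S(u_n)\le \tfrac12 I(\varphi_n)$, hence $\|u_n\|_{H^1_0}$ bounded.
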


\begin{proof} Let $(u_k)_{k \in \N} \subseteq C_D([0,1])$ be a sequence
converging to some limit $u_\infty \in C_D([0,1])$. We must check that
\begin{equation}\label{continferiorv}
V(\mathbf{0},u_\infty) \leq \liminf_{k \rightarrow +\infty} V(\mathbf{0},v_k).
\end{equation} If $S(u_\infty)=+\infty$ then by Proposition \ref{costo} we see
that $V(\mathbf{0},u_\infty)=+\infty$ and thus by the lower semicontinuity of
$S$ we conclude that $\lim_{v \rightarrow u} V(\mathbf{0},v)=+\infty$ which
establishes \eqref{continferiorv} in this particular case. Now, if $S(u_\infty)<
+\infty$ then, by the lower semicontinuity of $S$ and the continuity in time of
the solutions to \eqref{MainPDE}, given $\delta > 0$ there exists
$t_0 > 0$ sufficiently small such that $S(U^{u_\infty}(t_0,\cdot)) > S(u_\infty)
- \frac{\delta}{2}$. Moreover, by Proposition \ref{G.2} we may even assume that
$t_0$ is such that
$$
\| U^{u_k}(t_0,\cdot) - U^{u_\infty}(t_0,\cdot) \|_\infty \leq 2 \| u_k -
u_\infty\|_\infty
$$ for any $k \in \N$ sufficiently large. Thus, given $k$ sufficiently large and
a path $\varphi_k$ \mbox{from $\mathbf{0}$ to $u_k$} we construct a path
$\varphi_{k,\infty}$ from $\mathbf{0}$ to $u_\infty$ by the following steps:
\begin{enumerate}
\item [i.] We start from $\mathbf{0}$ and follow $\varphi_k$ until we reach
$u_k$.
\item [ii.] From $u_k$ we follow the deterministic flow $U^{u_k}$ until time
$t_0$.
\item [iii.] We then join $U^{u_k}(t_0,\cdot)$ and $U^{u_\infty}(t_0,\cdot)$ by
a linear interpolation of speed one.
\item [iv.] From $U^{u_\infty}(t_0,\cdot)$ we follow the reverse deterministic
flow until we reach $u_\infty$.
\end{enumerate}
By the considerations made in the proof of \cite[Lemma~4.3]{SJS} it is not
difficult to see that there exists $C > 0$ such that for any $k \in \N$
sufficiently large we have
$$
I(\varphi_{k,\infty}) \leq I(\varphi_k) + C \| u_k - u_\infty\|_\infty + \delta
$$ so that we ultimately obtain
$$
V(\mathbf{0},u_\infty) \leq \liminf_{k \rightarrow +\infty} V(\mathbf{0},u_k) +
\delta.
$$ Since $\delta > 0$ can be taken arbitrarily small we conclude
\eqref{continferiorv}.
\end{proof}

\begin{prop}\label{A.6} For any $u,v \in C_D([0,1])$ the map $t \mapsto
V\left(u,U^v(t,\cdot)\right)$ is decreasing.
\end{prop}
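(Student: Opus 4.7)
The plan is to exploit the fact that following the deterministic flow costs nothing in the rate function: along any solution of \eqref{MainPDE} the integrand $|\partial_t \varphi - \partial^2_{xx}\varphi - g(\varphi)|^2$ vanishes identically. So a path from $u$ to $U^v(s,\cdot)$ can always be extended to a path reaching $U^v(t,\cdot)$ for any $t > s$ by appending a segment of the deterministic flow, at no additional cost.

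More concretely, fix $0 \le s < t < \tau^v$ and assume $V(u, U^v(s,\cdot)) < +\infty$ (otherwise there is nothing to prove). Given $\eta > 0$, I would pick a path $\varphi$ defined on $[0,T]\times[0,1]$ from $u$ to $U^v(s,\cdot)$ with $I(\varphi) < V(u, U^v(s,\cdot)) + \eta$, and then define the concatenation
$$
\tilde\varphi(r,\cdot) = \left\{\begin{array}{ll} \varphi(r,\cdot), & 0 \le r \le T, \\ U^v(s+(r-T),\cdot), & T \le r \le T+(t-s). \end{array}\right.
$$
The two pieces match continuously at $r = T$ since both equal $U^v(s,\cdot)$ there, so $\tilde\varphi$ is a continuous path from $u$ to $U^v(t,\cdot)$. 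On the interval $[T, T+(t-s)]$ the map $\tilde\varphi$ solves \eqref{MainPDE} exactly, so the integrand in $I$ vanishes there. Splitting the integral at $r = T$ thus gives $I(\tilde\varphi) = I(\varphi)$, whence $V(u, U^v(t,\cdot)) \le I(\tilde\varphi) < V(u, U^v(s,\cdot)) + \eta$. Sending $\eta \to 0$ yields the desired monotonicity.

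The only point that requires care is verifying that the concatenation lies in $W^{1,2}_2([0,T+(t-s)]\times[0,1])$, so that its rate is well-defined and equals the sum of the rates over the two subintervals. This is a standard Sobolev gluing fact: both $\varphi$ and $r \mapsto U^v(s+r,\cdot)$ belong to $W^{1,2}_2$ on their respective time intervals (the latter thanks to the smoothing estimates of Proposition \ref{A.1}, which make $U^v$ classically regular away from $r = 0$, and since we are working strictly below the blow-up time $\tau^v$), and they share a common time slice at $r = T$. Should any technical issue arise exactly at the junction, one can mollify $\tilde\varphi$ in a small neighborhood of $r = T$ and pass to the limit; because the appended piece contributes zero cost, the limiting procedure preserves the inequality $I(\tilde\varphi) \le I(\varphi) + o(1)$, which suffices by the same argument. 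I expect this gluing step to be the only mildly technical part, with the rest of the proof being essentially a direct one-line observation about the form of $I$.
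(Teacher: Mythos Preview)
Your proposal is correct and follows essentially the same approach as the paper: extend any path from $u$ to $U^v(s,\cdot)$ by appending the deterministic flow, observe that this appended segment has zero cost in the rate function, and then take the infimum over paths. The paper's proof is actually terser than yours, omitting the $W^{1,2}_2$ gluing discussion you include, so your version is if anything more careful.
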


\begin{proof}Given $0 \leq s < t$ and a path $\varphi$ from $u$ to
$U^{u}(s,\cdot)$ we may extend $\phi$ to a path $\tilde{\varphi}$ from $u$ to
$U^{u}(t,\cdot)$ simply by following the deterministic flow afterwards.
It follows that
$$
V\left(u,U^v(t,\cdot)\right) \leq I(\tilde{\varphi}) = I(\varphi)
$$ which, by taking infimum over all paths from $u$ to $U^{u}(s,\cdot)$, yields
the \mbox{desired monotonicity.}
\end{proof}

{\bf Acknowledgments.} We thank Philippe Souplet for the discussions we had with him and the proof of Lemma \ref{compacidad}.

\bibliographystyle{plain}
\bibliography{bibliografia}

\end{document}